\theoremstyle{plain}
\newtheorem{thm}{Theorem}
  \theoremstyle{definition}
  \newtheorem{defn}[thm]{Definition}
  \theoremstyle{definition}
  \theoremstyle{remark}
  \newtheorem{rem}[thm]{Remark}
  \theoremstyle{plain}
  \newtheorem{prop}[thm]{Proposition}
  \theoremstyle{plain}
  \newtheorem{lem}[thm]{Lemma}
  \theoremstyle{plain}
  \newtheorem{cor}[thm]{Corollary}
 \theoremstyle{definition}
  \newtheorem{example}[thm]{Example}
  \theoremstyle{remark}
  \newtheorem*{rem*}{Remark}
  \theoremstyle{definition}
\newcommand{\N}{\mathbb{N}}
\newcommand{\R}{{\mathbb{R}}}
\newcommand{\C}{{\mathbb{C}}}
\newcommand{\Z}{{\mathbb{Z}}}
\newcommand{\dd}{{{\rm d}}}
\newcommand{\ii}{{\rm i}}
\newcommand{\spn}{\mathop\mathrm{span}\nolimits}
\newcommand{\Dom}{\mathop\mathrm{Dom}\nolimits}
\newcommand{\der}{\mathop\mathrm{der}\nolimits}
\newcommand{\Ran}{\mathop\mathrm{Ran}\nolimits}
\newcommand{\Ker}{\mathop\mathrm{Ker}\nolimits}
\newcommand{\spec}{\mathop\mathrm{spec}\nolimits}
\newcommand{\dist}{\mathop\mathrm{dist}\nolimits}
\begin{document}

\title[The characteristic function for doubly infinite Jacobi matrices]
{The characteristic function for complex \br doubly infinite Jacobi matrices}
 
\author{Franti\v sek \v Stampach}
\address{Department of Applied Mathematics, 
	Faculty of Information Technology, Czech Technical University in~Prague, 
	Th{\' a}kurova~9, 160~00 Praha, Czech Republic. \br \& \br
	Department of Mathematics, Stockholm University, Kr\"{a}ftriket 5, SE - 106 91 Stockholm, Sweden.}
\email{stampfra@fit.cvut.cz}

\subjclass{Primary 47B36; Secondary 15A18}

\keywords{Doubly infinite Jacobi matrix, spectral analysis, characteristic function}

\date{\today}

\begin{abstract}
 We introduce a class of doubly infinite complex Jacobi matrices determined by a simple convergence condition
 imposed on the diagonal and off-diagonal sequences. For each Jacobi matrix belonging to this class,
 an analytic function, called a characteristic function, is associated with it. It is shown that the point spectrum
 of the corresponding Jacobi operator restricted to a suitable domain coincides with the zero set of the 
 characteristic function. Also, coincidence regarding the order of a zero of the characteristic function and the
 algebraic multiplicity of the corresponding eigenvalue is proved. Further, formulas for the entries of eigenvectors,
 generalized eigenvectors, a summation identity for eigenvectors, and matrix elements of the resolvent operator 
 are provided. The presented method is illustrated by several concrete examples.
\end{abstract}

\maketitle

\section{Introduction}

In a recent paper \cite{StampachStovicek13}, a method for the spectral analysis of a certain class of semi-infinite
Jacobi matrices based on the so-called characteristic function was developed. The spectral analysis of 
semi-infinite Jacobi matrices is intimately related to classical branches of analysis such as orthogonal
polynomials, moment problems, and continued fractions. There are many monographs that have focused on this, 
see \cite{Akhiezer,Simon_mp,Teschl} for several examples. On the other hand, the spectral theory of doubly infinite 
Jacobi matrices does not appear as often; a nice exposition of this theory can be found in~\cite[Chap.~7]{Berezanskii}.
Some aspects of the spectral theory of the doubly and semi-infinite Jacobi matrices are quite similar (one may
consult, for example, \cite{MassonRepka}); however, some are different. The main difference is the fact that, in the case
of doubly infinite matrices, the space of the solutions to the corresponding second-order difference equation is 
two-dimensional.

The main aim of this article is twofold. First, in analogy with the semi-infinite case treated in~\cite{StampachStovicek13}, 
we introduce the characteristic function associated with the doubly infinite Jacobi matrix
\begin{equation}
  \mathcal{J}=\begin{pmatrix}
	  \ddots & \ddots & \ddots \\
               & w_{-2} & \lambda_{-2} & w_{-1} \\
               & & w_{-1} & \lambda_{-1} & w_{0} \\
               & & & w_{0} & \lambda_{0} & w_{1} \\
               & & & & w_{1} & \lambda_{1} & w_{2} \\
               & & & & & \ddots & \ddots & \ddots 
              \end{pmatrix}\!,
\label{eq:def_J}
\end{equation}
where $\lambda_{n},w_{n}\in\C$ and $w_{n}\neq0$ for all $n\in\Z$. We also show how the characteristic function
can be used to analyze spectral properties of a linear operator acting on $\ell^{2}(\Z)$ whose matrix 
representation with respect to the standard basis of $\ell^{2}(\Z)$ coincides with $\mathcal{J}$. 
Second, we extend the method by proving a result concerning the algebraic multiplicity of eigenvalues and
generalized eigenvectors.

The subclass of matrices $\mathcal{J}$ for which the characteristic function is well defined is determined by
a simple convergence condition imposed on $\lambda_{n}$ and $w_{n}$, see \eqref{eq:assum_sum_w}. The 
self-adjointness of an operator associated with $\mathcal{J}$ is not essential for the presented method; 
therefore, we treat the matrix $\mathcal{J}$ with complex entries, which might be of interest from the point 
of view of the spectral theory of non-self-adjoint operators - a currently very active and rapidly developing field
\cite{Davies,EmbreeTrefethen,Helffer}. The main results of this paper are stated in Theorems~\ref{thm:main} and~\ref{thm:multiplicity}. 

In more detail, we show (under a mild assumption additional to the essential convergence condition \eqref{eq:assum_sum_w})
that the matrix $\mathcal{J}$ uniquely determines a densely defined closed (Jacobi) operator $J$ whose spectral 
properties are related to the properties of the corresponding characteristic function. Namely, by being restricted to a certain 
subset of $\C$, the spectrum of $J$ in this set is discrete and coincides with the set of zeros of the characteristic 
function. Further, we provide formulas for eigenvectors, a summation formula for the entries of an eigenvector and an 
expression for the entries of the matrix representation of the resolvent operator. These results are worked out within Section~\ref{sec:char_func} and 
represent a doubly infinite analog to the corresponding results derived in~\cite{StampachStovicek13} for the case of 
semi-infinite Jacobi matrices.

Section \ref{sec:multiplicity} is devoted to the connection between the order of zeros of the characteristic function
and the algebraic multiplicity of the corresponding Jacobi operator's eigenvalues. In addition, we provide formulas 
for basis vectors of generalized eigenspaces. These results are of particular interest when questions on
diagonalization of non-self-adjoint Jacobi operators are examined.

In Section \ref{sec:diag_reg}, we impose some additional conditions on the diagonal sequence of $\mathcal{J}$, which 
allows us to remove singularities of the characteristic function, with possibly one exception located at 
the origin, and introduce a regularized characteristic function. According to the type of the additional condition, 
we distinguish 3 different cases and illustrate the respective results on concrete examples. Moreover, in 2 cases 
concerning either a compact operator or an operator with compact resolvent, we indicate a connection between the 
regularized characteristic function and the theory of regularized determinants.

\section{The characteristic function and doubly infinite Jacobi matrices}\label{sec:char_func}

In this section, we introduce the characteristic function associated with the doubly infinite Jacobi matrix~\eqref{eq:def_J}
and derive spectral results similar to those obtained in~\cite{StampachStovicek13}. Where the verification of a result
is completely analogous to the corresponding one given in~\cite{StampachStovicek13} for the semi-infinite matrix, 
the proof is only indicated for the sake of brevity.

\subsection{Function $\mathfrak{F}$}

The main algebraic tool for the definition of the characteristic function is a function called $\mathfrak{F}$ 
which appeared in \cite{StampachStovicek11} for the first time. The definition given below is a slight generalization 
of the original one from~\cite[Def.~1]{StampachStovicek11} and is consistent with the one mentioned 
in~\cite[Sec.~2]{StampachStovicek15}.

\begin{defn}
We define $\mathfrak{F}:\Dom\mathfrak{F}\to\C$ by the formula
\begin{equation}
\mathfrak{F}\!\left(\left\{ x_{k}\right\} _{k=-\infty}^{\infty}\right):=1+\sum_{m=1}^{\infty}(-1)^{m}\sum_{k_{1}=-\infty}^{\infty}
\sum_{k_{2}=k_{1}+2}^{\infty}\dots\sum_{k_{m}=k_{m-1}+2}^{\infty}\;\prod_{j=1}^{m}x_{k_{j}}x_{k_{j}+1},
\label{eq:def_F}
\end{equation}
where
\[
\Dom\mathfrak{F}:=\left\{\left\{ x_{k}\right\} _{k=-\infty}^{\infty}\subset\C \;\bigg| \;
\sum_{k=-\infty}^{\infty}\left|x_{k}x_{k+1}\right|<\infty\right\}\!.
\]
Further, if $\left\{ x_{k}\right\} _{k=n_{1}}^{n_{2}}$, with $n_{1},n_{2}\in\mathbb{Z}\cup\{\pm\infty\}$, 
$n_{1}\leq n_{2}$, is given, we put
\[
 \mathfrak{F}\!\left(\left\{ x_{k}\right\}_{k=n_{1}}^{n_{2}}\right):=\mathfrak{F}\!\left(\left\{ x_{k}\right\}_{k=-\infty}^{\infty}\right)\!,
\]
where $x_{k}:=0$, whenever $k<n_{1}$ or $k>n_{2}$, and provided that $\{x_{k}\}_{k=-\infty}^{\infty}\in\Dom\mathfrak{F}$.
Conventionally, for $n_{1},n_{2}\in\mathbb{Z}$, we also put
\[
\mathfrak{F}\!\left(\left\{ x_{k}\right\} _{k=n_{1}}^{n_{2}}\right):=1, \; \mbox{ if } n_{2}=n_{1}-1,\; \mbox{ and } \;
\mathfrak{F}\!\left(\left\{ x_{k}\right\} _{k=n_{1}}^{n_{2}}\right):=0, \; \mbox{ if } n_{2}=n_{1}-2.
\]
\end{defn}

\begin{rem}\label{rem:def_F}$\ $
  Note that the absolute value of the $m$th summand on the RHS of \eqref{eq:def_F} is majorized by the expression
 \[
  \sum_{\substack{k\in\mathbb{Z}^{m}\\k_{1}<k_{2}<\dots<k_{m}}}\prod_{j=1}^{m}|x_{k_{j}}x_{k_{j}+1}|
  \leq\frac{1}{m!}\left(\sum_{k=-\infty}^{\infty}|x_{k}x_{k+1}|\right)^{\! m},
\]cf. \cite[Rem.~2]{StampachStovicek11}.
  Consequently, the function $\mathfrak{F}$ is well defined on sequences from $\Dom\mathfrak{F}$, and we have the estimate
  \[
   \left|\mathfrak{F}\!\left(\left\{ x_{k}\right\}_{k=-\infty}^{\infty}\right)\right|\leq\exp\left(\sum_{k=-\infty}^{\infty}\left|x_{k}x_{k+1}\right|\right)\!.
  \]
\end{rem}

Recall several properties of $\mathfrak{F}$. In the following formulas we always assume that all the expressions are well defined,
i.e., the vector in the argument of $\mathfrak{F}$ (possibly with additional zeros) belongs to $\Dom\mathfrak{F}$. 
First of all, we have the important relation \cite[Eq.~(19)]{StampachStovicek13}
\begin{align}
\mathfrak{F}\!\left(\left\{ x_{k}\right\}_{k=n_{1}}^{n_{2}}\right)=&\,\mathfrak{F}\!\left(\left\{ x_{k}\right\}_{k=n_{1}}^{n}\right)\mathfrak{F}\!\left(\left\{ x_{k}\right\}_{k=n+1}^{n_{2}}\right)\nonumber\\
&-x_{n}x_{n+1}\,\mathfrak{F}\!\big(\left\{x_{k}\right\}_{k=n_{1}}^{n-1}\big)\mathfrak{F}\!\left(\left\{x_{k}\right\}_{k=n+2}^{n_{2}}\right)\!,
\label{eq:F_recur_general}
\end{align}
where $n\in\mathbb{Z}$ satisfies $n_{1}\leq n \leq n_{2}$ and $n_{1}$, $n_{2}\in\Z\cup\{\pm\infty\}$. Equivalently, \eqref{eq:F_recur_general} can be written as
\begin{align}
 \mathfrak{F}\!\left(\left\{ x_{k}\right\}_{k=n_{1}}^{n_{2}}\right)=&\,\mathfrak{F}\!\left(\left\{ x_{k}\right\}_{k=n_{1}}^{n}\right)\mathfrak{F}\!\left(\left\{ x_{k}\right\} _{k=n+1}^{n_{2}}\right)+
 \mathfrak{F}\!\big(\left\{ x_{k}\right\} _{k=n_{1}}^{n-1}\big)\mathfrak{F}\!\big(\left\{ x_{k}\right\} _{k=n}^{n_{2}}\big)\nonumber\\
 &-\mathfrak{F}\!\big(\left\{ x_{k}\right\}_{k=n_{1}}^{n-1}\big)\mathfrak{F}\!\left(\left\{ x_{k}\right\} _{k=n+1}^{n_{2}}\right)\!, \label{eq:F_recur_general_3term}
\end{align}
where we have used that
\begin{equation}
  \mathfrak{F}\!\left(\left\{ x_{k}\right\} _{k=n}^{n_{2}}\right)=\mathfrak{F}\!\left(\left\{ x_{k}\right\} _{k=n+1}^{n_{2}}\right)
 -x_{n}x_{n+1}\,\mathfrak{F}\!\left(\left\{ x_{k}\right\} _{k=n+2}^{n_{2}}\right)\!,\quad n\leq n_{2},
 \label{eq:F_recur_+inf}
\end{equation}
which is the special case of \eqref{eq:F_recur_general} with $n_{1}:=n$. Similarly, by putting $n_{2}:=n+1$ in \eqref{eq:F_recur_general}, we obtain
\begin{equation}
  \mathfrak{F}\!\big(\left\{ x_{k}\right\} _{k=n_{1}}^{n+1}\big)=\mathfrak{F}\!\left(\left\{ x_{k}\right\} _{k=n_{1}}^{n}\right)
 -x_{n}x_{n+1}\,\mathfrak{F}\!\big(\left\{ x_{k}\right\}_{k=n_{1}}^{n-1}\big), \quad n\geq n_{1}-1.
 \label{eq:F_recur_-inf}
\end{equation}
Second, one has the limit relations
\begin{equation}
 \lim_{n\to\infty}\mathfrak{F}\!\left(\left\{ x_{k}\right\}_{k=n}^{\infty}\right)=1 \; \mbox{ and } \; \lim_{n\to-\infty}\mathfrak{F}\!\left(\left\{ x_{k}\right\}_{k=-\infty}^{n}\right)=1
\label{eq:limits_F_1}
\end{equation}
and
\[
\mathfrak{F}\!\left(\left\{ x_{k}\right\}_{k=-\infty}^{\infty}\right)=\lim_{n\to-\infty}\mathfrak{F}\!\left(\left\{ x_{k}\right\}_{k=n}^{\infty}\right)=
\lim_{n\to\infty}\mathfrak{F}\!\left(\left\{ x_{k}\right\}_{k=-\infty}^{n}\right)\!,
\]
which one verifies in the same way as \cite[Lem.~2]{StampachStovicek13}.

\subsection{The characteristic function}

For two given sequences satisfying a certain convergence condition, we will define a complex function defined on a subset of $\C$ in terms of $\mathfrak{F}$.
This function will be called the characteristic function associated with the doubly infinite Jacobi matrix $\mathcal{J}$ since it plays a similar role to the characteristic 
polynomial in linear algebra, as it will be further demonstrated.

First, let us introduce a notation we will use. For $\lambda:\Z\to\C$ a complex sequence, we put
$\Ran(\lambda):=\{\lambda_{n}\mid n\in\Z\}$ and $\C_{0}^{\lambda}:=\C\setminus\overline{\Ran(\lambda)}$, where
$\overline{\Ran(\lambda)}$ is the closure of $\Ran(\lambda)$. Further, we denote by $\der(\lambda)$ the set of all
(finite) accumulation points of $\Ran(\lambda)$, i.e., $\der(\lambda)$ is the set of all limit points of all possible convergent subsequences of $\lambda$.
Clearly, $\overline{\Ran(\lambda)}=\Ran(\lambda)\cup\der(\lambda)$.

For $\lambda,w:\Z\to\C$, the characteristic function will be defined under the condition
\begin{equation}
 \sum_{n=-\infty}^{\infty}\left|\frac{w_{n}^{2}}{(\lambda_{n}-z_{0})(\lambda_{n+1}-z_{0})}\right|<\infty
\label{eq:assum_sum_w}
\end{equation}
that is valid for at least one $z_{0}\in\C_{0}^{\lambda}$. If this is true, then \eqref{eq:assum_sum_w} remains valid for all $z_{0}\in\C_{0}^{\lambda}$ and
the convergence is locally uniform on $\mathbb{C}_{0}^{\lambda}$, as it is straightforward to verify; cf. the proof of 
\cite[Lem.~8]{StampachStovicek13}.
In fact, one also readily shows that \eqref{eq:assum_sum_w} remains valid also for $z\in\Ran(\lambda)\setminus\der(\lambda)$ provided the
finite number of terms where we would divide by zero are omitted in the series.

\begin{defn}\label{def:char_fction}
 Let $\lambda:\Z\to\C$ and $w:\Z\to\C\setminus\{0\}$ be such that \eqref{eq:assum_sum_w} holds for at least one $z_{0}\in\C_{0}^{\lambda}$. We define the \emph{characteristic
 function associated with the doubly infinite matrix} $\mathcal{J}$ given by \eqref{eq:def_J} by
 \begin{equation}
  F_{\mathcal{J}}(z):=\mathfrak{F}\!\left(\left\{\frac{\gamma_{n}^{2}}{z-\lambda_{n}}\right\}_{n=-\infty}^{\infty}\right)\!, \quad \forall z\in\C_{0}^{\lambda},
 \label{eq:def_char_fction}
 \end{equation}
 where $\gamma:\Z\to\C$ is any sequence satisfying the difference equation $\gamma_{n}\gamma_{n+1}=w_{n}$ for all $n\in\Z$.
\end{defn}

\begin{rem}
 Note that since the sequence in the argument of $\mathfrak{F}$ in \eqref{eq:def_char_fction} fulfills \eqref{eq:assum_sum_w} for all $z_0\in\C_{0}^{\lambda}$,
 it belongs to $\Dom\mathfrak{F}$ and the RHS of \eqref{eq:def_char_fction} is therefore well defined for all $z\in\C_{0}^{\lambda}$. 
 Further, since $0\notin\Ran w$, the sequence $\gamma$ always exists and is determined uniquely by specifying one value, for example, by setting $\gamma_{0}=1$.
 The definition of the characteristic function does not depend on a particular choice of the sequence $\gamma$.
\end{rem}

By a simple modification of the argument used in the proof of~\cite[Lem.~8]{StampachStovicek13}, one verifies that
\begin{equation}
 \lim_{n\rightarrow\infty}\mathfrak{F\!}\left(\left\{ \frac{\gamma_{k}^{\,2}}{\lambda_{k}-z}\right\} _{k=-n}^{n}\right)=F_{\mathcal{J}}(z),
 \label{eq:lim_calF_FJ}
\end{equation}
and the convergence is locally uniform on $\mathbb{C}_{0}^{\lambda}$, provided the condition \eqref{eq:assum_sum_w} holds for at least one $z_{0}\in\C_{0}^{\lambda}$.
Consequently, $F_{\mathcal{J}}$ is an analytic function on $\mathbb{C}_{0}^{\lambda}$. Moreover, $F_{\mathcal{J}}$ is meromorphic on $\C\setminus\der(\lambda)$. Indeed,
the singularity of $F_{\mathcal{J}}$ at a point $z=\lambda_{n}$, for some $n\in\Z$ such that $\lambda_{n}\not\in\der(\lambda)$, is either a removable singularity
or a pole of order less than or equal to $r(z)$ where
\begin{equation}
 r(z):=\#\{n\in\Z \mid \lambda_{n}=z\}
\label{eq:def_r}
\end{equation}
is the number of values of $\lambda$ coinciding with $z$. To see this, for $z\in\Ran(\lambda)\setminus\der(\lambda)$ fixed, take
$m,M\in\Z$, $m\leq M$, such that $\lambda_{n}\neq z$ for all $n\leq m$ and all $n\geq M$. Using the rule \eqref{eq:F_recur_general_3term}
twice, one derives, for $u\in\C_{0}^{\lambda}$, that
\begin{align}
&F_{\mathcal{J}}(u)=\left(F_{-\infty}^{m}(u)-F_{-\infty}^{m-1}(u)\right)\big[F_{m+1}^{M}(u)F_{M+1}^{\infty}(u)+F_{m+1}^{M-1}(u)F_{M}^{\infty}(u)\nonumber\\
&\hskip232pt-F_{m+1}^{M-1}(u)F_{M+1}^{\infty}(u)\big]\nonumber\\
&\hskip23pt+F_{-\infty}^{m-1}(u)\left[F_{m}^{M}(u)F_{M+1}^{\infty}(u)+F_{m}^{M-1}(u)F_{M}^{\infty}(u)-F_{m}^{M-1}(u)F_{M+1}^{\infty}(u)\right]\!, \label{eq:order_pole_eq}
\end{align}
where we temporarily denote
\[
 F_{r}^{s}(u):=\mathfrak{F}\!\left(\left\{\frac{\gamma_{n}^{2}}{u-\lambda_{n}}\right\}_{n=r}^{s}\right)\!, \quad r,s\in\Z\cup\{\pm\infty\}, r\leq s,
\]
for brevity. Clearly, only functions $F_{m+1}^{M}$, $F_{m+1}^{M-1}$, $F_{m}^{M}$, and $F_{m}^{M-1}$ can have a singularity at $z$ which can be either a removable singularity or a pole of order at most $r(z)$.
The remaining terms on the RHS of \eqref{eq:order_pole_eq} are functions analytic at~$z$.

\subsection{The Jacobi operator}

Let us recall the standard procedure of prescribing densely defined and closed operators associated with $\mathcal{J}$. 
On the algebraic level, the formal doubly infinite matrix $\mathcal{J}$ can be understood as a linear
mapping acting on the space of complex sequences $x$ (indexed by $\Z$) by a formal matrix multiplication, i.e.,
\begin{equation}
 (\mathcal{J}x)_{n}=w_{n-1}x_{n-1}+\lambda_{n}x_{n}+w_{n}x_{n+1}, \quad \forall n\in\Z.
\label{eq:Jx_n_eq}
\end{equation}

Let $\{e_{n} \mid n\in\Z\}$ stands for the standard basis of $\ell^{2}(\Z)$, i.e., $(e_{n})_{m}=\delta_{m,n}$ for $m,n\in\Z$. Define an auxiliary operator $J_{0}$ as
\[
 J_{0}:=\mathcal{J}\upharpoonleft\spn\{e_{n}\mid n\in\Z\}.
\]
The operator $J_{0}$, as an operator on the Hilbert space $\ell^{2}(\Z)$, need not be closed in $\ell^{2}(\Z)$, 
but is always closable, cf. \cite[Subsec.~2.1]{Beckermann01}.
Hence, one can introduce the so-called minimal operator $J_{\min}$ as the operator closure of~$J_{0}$.
On the other hand, it is natural to define the maximal operator $J_{\max}$ by putting
\[
 \Dom J_{\max}:=\{x\in\ell^{2}(\Z) \mid \mathcal{J}x\in\ell^{2}(\Z)\}
\]
and
\[
  J_{\max}x:=\mathcal{J}x, \; \mbox{ for } x\in\Dom J_{\max}.
\]
Here, the expression $\mathcal{J}x$ is to be understood as in \eqref{eq:Jx_n_eq}.

One can show that $J_{\min}\subset J_{\max}$, but the equality does not hold in general. The operators $J_{\min}$ and 
$J_{\max}$ are related via their adjoints:
\begin{equation}
 J_{\max}=\mathcal{C}J_{\min}^{*}\mathcal{C} \;\mbox{ and }\; J_{\min}=\mathcal{C}J_{\max}^{*}\mathcal{C},
\label{eq:J_max_rel_J_min}
\end{equation}
where $\mathcal{C}$ stands for the complex conjugation operator acting on complex sequences as $(\mathcal{C}x)_{n}:=\overline{x_{n}}$, $n\in\Z$.
The verification of formulas in \eqref{eq:J_max_rel_J_min} is a matter of straightforward use of the definition of the adjoint operator; see also
\cite[Lem.~2.1]{Beckermann01} for an analogous proof for operators associated with a semi-infinite complex Jacobi matrix. 

It follows from \eqref{eq:J_max_rel_J_min} that $J_{\max}$ is closed. In addition, any closed linear operator $A$ acting on $\ell^{2}(\Z)$ such that
$\{e_n \mid n\in\Z\}\subset\Dom A$, whose matrix representation with respect to the standard basis coincides with $\mathcal{J}$, satisfies
$J_{\min}\subset A\subset J_{\max}$. If $J_{\min}=J_{\max}$, the Jacobi matrix $\mathcal{J}$ determines uniquely the Jacobi operator.
In this case, the subscripts can be omitted and we simply write $J:=J_{\min}=J_{\max}$.

\subsection{Spectral properties of the Jacobi operator via the characteristic function}

For $\lambda:\Z\to\C$, $w:\Z\to\C\setminus\{0\}$, and $z\notin\Ran(\lambda)$, we put
\begin{equation}
 \mathcal{P}_{n}(z):=\prod_{k=1}^{n}\frac{w_{k-1}}{z-\lambda_{k}}, \; \mbox{ if } n\geq0, \; \mbox{ and } \;
 \mathcal{P}_{n}(z):=\prod_{k=n+1}^{0}\frac{z-\lambda_{k}}{w_{k-1}}, \; \mbox{ if } n<0.
\label{eq:def_P_z}
\end{equation}
Note that $\mathcal{P}(z):\Z\to\C\setminus\{0\}$ satisfies the equations
 \begin{equation}
  \mathcal{P}_{0}(z)=1, \; \mbox{ and } \; \mathcal{P}_{n+1}(z)=\frac{w_{n}}{z-\lambda_{n+1}}\mathcal{P}_{n}(z), \quad \forall n\in\Z.
 \label{eq:P_z_recur}
 \end{equation}
Further, for $\lambda:\Z\to\C$ and $z\in\C$, we introduce quantities
\begin{equation}
   r_{+}(z):=\#\{n>0 \mid \lambda_{n}=z\} \; \mbox{ and } \; r_{-}(z):=\#\{n\leq 0 \mid \lambda_{n}=z\}.
\label{eq:def_r_plus_minus}
\end{equation}
Thus, $r_{\pm}(z)\in\{0,1,2,\dots,\infty\}$ and $r_{+}(z)+r_{-}(z)=r(z)$ where $r(z)$ is defined in \eqref{eq:def_r}. 
Note that $r_{\pm}(z)<\infty$, if $z\notin\der(\lambda)$, and $r_{\pm}(z)=0$, if $z\notin\Ran(\lambda)$. 

\begin{defn}\label{def:sol_f_g}
 Let $\lambda:\Z\to\C$ and $w:\Z\to\C\setminus\{0\}$ be such that \eqref{eq:assum_sum_w} holds for at least one $z_{0}\in\C_{0}^{\lambda}$. For $z\in\C_{0}^{\lambda}$, we define
 two sequences $f(z),g(z):\Z\to\C$ by putting
 \begin{equation}
  f_{n}(z):=\mathcal{P}_{n}(z)\mathfrak{F\!}\left(\left\{ \frac{\gamma_{k}^{\,2}}{\lambda_{k}-z}\right\} _{k=n+1}^{\infty}\right)\!,
 \label{eq:def_f_z}
 \end{equation}
 and
 \begin{equation}
  g_{n}(z):=\frac{1}{w_{n-1}\mathcal{P}_{n-1}(z)}\mathfrak{F\!}\left(\left\{ \frac{\gamma_{k}^{\,2}}{\lambda_{k}-z}\right\}_{k=-\infty}^{n-1}\right)\!,
 \label{eq:def_g_z}
 \end{equation}
 for $n\in\Z$, where $\gamma:\Z\to\C$ is as in Definition \ref{def:char_fction} and $\mathcal{P}_{n}(z)$ given by \eqref{eq:def_P_z}. Further, we extend the definition of $f(z)$ and $g(z)$ for $z\in\Ran(\lambda)\setminus\der(\lambda)$ by formulas
 \begin{equation}
  f_{n}(z):=\lim_{u\to z}(u-z)^{r_{+}(z)}f_{n}(u) \quad \mbox{ and } \quad g_{n}(z):=\lim_{u\to z}(u-z)^{r_{-}(z)}g_{n}(u),
 \label{eq:def_f_g_lim_ext}
 \end{equation}
 where $r_{\pm}(z)$ are defined by \eqref{eq:def_r_plus_minus}.
 \end{defn}
 
\begin{rem}
Note that the sequences $f(z)$ and $g(z)$ are well defined by Definition \ref{def:sol_f_g} for all $z\in\C\setminus\der(\lambda)$. In addition, for $z\in\C\setminus\der(\lambda)$
and $n>0$ such that $n\geq\max\{k\in\Z \mid \lambda_{k}=z\}$, one has
 \begin{equation}
  f_{n}(z)=\left(\prod_{\substack{k=1 \\ \lambda_{k}\neq z}}^{n}\frac{w_{k-1}}{z-\lambda_{k}}\right)\mathfrak{F\!}\left(\left\{ \frac{\gamma_{k}^{\,2}}{\lambda_{k}-z}\right\} _{k=n+1}^{\infty}\right)\!.
 \label{eq:f_n_n_large}
 \end{equation}
Similarly, for $n\leq0$ such that $n\leq\min\{k\in\Z \mid \lambda_{k}=z\}$, one has
 \begin{equation}
  g_{n}(z)=\frac{1}{w_{n-1}}\left(\prod_{\substack{k=n \\ \lambda_{k}\neq z}}^{0}\frac{w_{k-1}}{z-\lambda_{k}}\right)\mathfrak{F\!}\left(\left\{ \frac{\gamma_{k}^{\,2}}{\lambda_{k}-z}\right\} _{k=-\infty}^{n-1}\right)\!.
 \label{eq:g_n_n_small}
 \end{equation}
\end{rem}

\begin{prop}\label{prop:sol_f_g}
  Let $\lambda:\Z\to\C$ and $w:\Z\to\C\setminus\{0\}$ be such that \eqref{eq:assum_sum_w} holds for at least one $z_{0}\in\C_{0}^{\lambda}$. Then $f(z)$ and $g(z)$
  are solutions of the eigenvalue equation $\mathcal{J}u=zu$ for all $z\in\C\setminus\der(\lambda)$. In addition, for 
  their Wronskian $W(f(z),g(z)):=w_{n}\left(f_{n}(z)g_{n+1}(z)
  -f_{n+1}(z)g_{n}(z)\right)$ ($n$ is arbitrary), one has
  \[
   W(f(z),g(z))=\lim_{u\to z}(u-z)^{r(z)}F_{\mathcal{J}}(u), \quad \forall z\in\C\setminus\der(\lambda).
  \]
\end{prop}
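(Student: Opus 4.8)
The plan is to treat first the generic spectral parameters $z\in\C_{0}^{\lambda}$, where $f(z)$ and $g(z)$ are given by the plain formulas \eqref{eq:def_f_z} and \eqref{eq:def_g_z}, and then to pass to the remaining points of $\Ran(\lambda)\setminus\der(\lambda)$ by the limiting prescription \eqref{eq:def_f_g_lim_ext}. Throughout I abbreviate $x_{k}:=\gamma_{k}^{\,2}/(\lambda_{k}-z)$ and $F_{r}^{s}:=\mathfrak{F}(\{x_{k}\}_{k=r}^{s})$, so that $f_{n}=\mathcal{P}_{n}F_{n+1}^{\infty}$ and $g_{n}=(w_{n-1}\mathcal{P}_{n-1})^{-1}F_{-\infty}^{n-1}$. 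To verify that $f(z)$ solves $\mathcal{J}u=zu$, i.e.\ $w_{n-1}f_{n-1}+w_{n}f_{n+1}=(z-\lambda_{n})f_{n}$, I would substitute these expressions, eliminate the ratios $\mathcal{P}_{n\pm1}/\mathcal{P}_{n}$ via the recurrence \eqref{eq:P_z_recur}, and use the single algebraic identity $w_{n}^{2}/(z-\lambda_{n+1})=x_{n}x_{n+1}(z-\lambda_{n})$, which follows from $w_{n}=\gamma_{n}\gamma_{n+1}$. The tail recurrence \eqref{eq:F_recur_+inf} then rewrites $x_{n}x_{n+1}F_{n+2}^{\infty}=F_{n+1}^{\infty}-F_{n}^{\infty}$, and the three terms collapse to $(z-\lambda_{n})\mathcal{P}_{n}F_{n+1}^{\infty}=(z-\lambda_{n})f_{n}$. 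The verification for $g(z)$ is entirely analogous, now invoking the head recurrence \eqref{eq:F_recur_-inf} in the form $x_{n-1}x_{n}F_{-\infty}^{n-2}=F_{-\infty}^{n-1}-F_{-\infty}^{n}$.

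To cover $z\in\Ran(\lambda)\setminus\der(\lambda)$, I would use that, by the Remark following Definition~\ref{def:sol_f_g}, every entry $f_{n}(z)=\lim_{u\to z}(u-z)^{r_{+}(z)}f_{n}(u)$ is well defined, and crucially that the \emph{same} exponent $r_{+}(z)$ is used for all $n$ (and $r_{-}(z)$ for all $g_{n}$). Hence, starting from the recurrence $w_{n-1}f_{n-1}(u)+\lambda_{n}f_{n}(u)+w_{n}f_{n+1}(u)=u\,f_{n}(u)$ valid for $u\in\C_{0}^{\lambda}$ near $z$, I would multiply by $(u-z)^{r_{+}(z)}$ and let $u\to z$; the coefficients $w_{n-1},\lambda_{n},w_{n}$ are constant and the factor $u$ on the right tends to $z$, so the identity survives in the limit and yields $\mathcal{J}f(z)=z\,f(z)$. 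The same argument with $r_{-}(z)$ handles $g(z)$.

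For the Wronskian I would first record the standard fact that $W_{n}:=w_{n}(f_{n}g_{n+1}-f_{n+1}g_{n})$ is independent of $n$ whenever $f,g$ solve the three-term recurrence (a one-line computation using the recurrence to eliminate $w_{n}f_{n+1}$ and $w_{n}g_{n+1}$), which justifies the phrase ``$n$ arbitrary''. Evaluating $W$ for $z\in\C_{0}^{\lambda}$, the factors $\mathcal{P}_{n}$ cancel and, using $w_{n}^{2}/[(z-\lambda_{n})(z-\lambda_{n+1})]=x_{n}x_{n+1}$, one is left with
\[
W=F_{-\infty}^{n}F_{n+1}^{\infty}-x_{n}x_{n+1}F_{-\infty}^{n-1}F_{n+2}^{\infty}.
\]
This is exactly the right-hand side of the factorization rule \eqref{eq:F_recur_general} taken with $n_{1}=-\infty$, $n_{2}=\infty$, whence $W=\mathfrak{F}(\{x_{k}\}_{k=-\infty}^{\infty})$. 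Since every summand in \eqref{eq:def_F} is a product of an even number $2m$ of the entries, $\mathfrak{F}$ is invariant under a global sign flip of its argument, so $\mathfrak{F}(\{\gamma_{k}^{\,2}/(\lambda_{k}-z)\}_{k=-\infty}^{\infty})=\mathfrak{F}(\{\gamma_{k}^{\,2}/(z-\lambda_{k})\}_{k=-\infty}^{\infty})=F_{\mathcal{J}}(z)$; as $r(z)=0$ on $\C_{0}^{\lambda}$, this matches the asserted formula.

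Finally, for $z\in\Ran(\lambda)\setminus\der(\lambda)$ I would combine the two extension exponents: since $f_{n}(z)g_{n+1}(z)=\lim_{u\to z}(u-z)^{r_{+}(z)+r_{-}(z)}f_{n}(u)g_{n+1}(u)$ and $r_{+}(z)+r_{-}(z)=r(z)$, and likewise for the other product, one obtains $W(f(z),g(z))=\lim_{u\to z}(u-z)^{r(z)}W(f(u),g(u))=\lim_{u\to z}(u-z)^{r(z)}F_{\mathcal{J}}(u)$ from the already established case. The only genuinely delicate points are the passage to the limit at $\Ran(\lambda)\setminus\der(\lambda)$, where one must keep the exponents $r_{\pm}(z)$ uniform in $n$ so that limits of products factor correctly, and the bookkeeping that reconciles the sign discrepancy between the $\lambda_{k}-z$ appearing in $f,g$ and the $z-\lambda_{k}$ appearing in $F_{\mathcal{J}}$; both are handled by the parity observation on $\mathfrak{F}$ and the structural identities above, so no serious obstacle remains once these are in place.
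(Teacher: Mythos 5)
Your proposal is correct and follows essentially the same route as the paper: verify the three-term recurrence via \eqref{eq:P_z_recur} together with \eqref{eq:F_recur_+inf} and \eqref{eq:F_recur_-inf}, compute the Wronskian so that the $\mathcal{P}_{n}$ factors cancel, and identify the result with $F_{\mathcal{J}}(z)$ through \eqref{eq:F_recur_general}, extending to $\Ran(\lambda)\setminus\der(\lambda)$ by the limit formulas \eqref{eq:def_f_g_lim_ext}. You merely make explicit two points the paper leaves implicit, namely the even-parity (sign-flip) invariance of $\mathfrak{F}$ reconciling $\lambda_{k}-z$ with $z-\lambda_{k}$, and the splitting $r(z)=r_{+}(z)+r_{-}(z)$ in the limiting case.
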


\begin{proof}
 We verify the statement for $z\in\C_{0}^{\lambda}$ only. The extension to all $z\in\C\setminus\der(\lambda)$ 
 is to be treated readily with the aid of limit formulas~\eqref{eq:def_f_g_lim_ext}.
 
By using the definition relations \eqref{eq:def_f_z}, \eqref{eq:def_g_z} and taking into account the recurrence \eqref{eq:P_z_recur}, the verification of the equation $\mathcal{J}u=zu$, i.e., 
 \[
  w_{n-1}u_{n-1}+(\lambda_{n}-z)u_{n}+w_{n}u_{n+1}=0, \quad \forall n\in\Z,
 \]
 for $u=f(z)$ and $u=g(z)$, is a straightforward application of the identities \eqref{eq:F_recur_+inf} and \eqref{eq:F_recur_-inf}.
 
 Further, for $n\in\Z$ and $z\in\C_{0}^{\lambda}$ arbitrary, we have
 \begin{align*}
  W&(f(z),g(z))=\mathfrak{F\!}\left(\left\{ \frac{\gamma_{k}^{\,2}}{\lambda_{k}-z}\right\}_{k=-\infty}^{n}\right)\mathfrak{F\!}\left(\left\{ \frac{\gamma_{k}^{\,2}}{\lambda_{k}-z}\right\}_{k=n+1}^{\infty}\right)\\
  &\hskip18pt-\frac{w_{n}^{2}}{(z-\lambda_{n})(z-\lambda_{n+1})}\mathfrak{F\!}\left(\left\{ \frac{\gamma_{k}^{\,2}}{\lambda_{k}-z}\right\}_{k=-\infty}^{n-1}\right)\mathfrak{F\!}\left(\left\{ \frac{\gamma_{k}^{\,2}}{\lambda_{k}-z}\right\} _{k=n+2}^{\infty}\right)\!.
 \end{align*}
 According to \eqref{eq:F_recur_general}, the RHS of the above equality coincides with $F_{\mathcal{J}}(z)$ which completes the proof.
\end{proof}

For the spectral analysis of a Jacobi operator associated to $\mathcal{J}$, we will need to know the asymptotic behavior
of the sequences $f(z)$ and $g(z)$ from Definition~\ref{def:sol_f_g}, for $z\in\C\setminus\der(\lambda)$, as the index 
approaches $+\infty$ and $-\infty$, respectively. The following auxiliary result will be used for this purpose.

\begin{lem}\label{lem:sum_P_at_inf}
 Let $\lambda:\Z\to\C$ and $w:\Z\to\C\setminus\{0\}$ be such that \eqref{eq:assum_sum_w} holds for at least one $z_{0}\in\C_{0}^{\lambda}$.
 Then
 \[
  \sum_{n=1}^{\infty}\prod_{\substack{k=1 \\ \lambda_{k}\neq z}}^{n}\left|\frac{w_{k-1}}{z-\lambda_{k}}\right|<\infty \quad \mbox{ and } 
  \quad \sum_{n=-\infty}^{0}\frac{1}{|w_{n-1}|}\prod_{\substack{k=n \\ \lambda_{k}\neq z}}^{0}\left|\frac{w_{k-1}}{z-\lambda_{k}}\right|<\infty,
 \]
 for all $z\in\C\setminus\der(\lambda)$.
\end{lem}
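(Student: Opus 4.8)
The plan is to reduce both series to the single hypothesis \eqref{eq:assum_sum_w} by an elementary squaring identity, and then to exploit that $z\notin\der(\lambda)$ keeps the relevant denominators bounded away from zero. First I record that, as observed just before Definition~\ref{def:char_fction}, the condition \eqref{eq:assum_sum_w} holds for every $z\in\C\setminus\der(\lambda)$ once the finitely many terms with a vanishing denominator are discarded; abbreviating
\[
 b_k:=\left|\frac{w_k^{2}}{(\lambda_k-z)(\lambda_{k+1}-z)}\right|,
\]
this says $\sum_k b_k<\infty$, the sum running over those $k$ with $\lambda_k\neq z$ and $\lambda_{k+1}\neq z$. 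Since $z\notin\der(\lambda)$, the value $z$ is not an accumulation point of $\{\lambda_n\}$, so there is $\delta>0$ with $|z-\lambda_n|\geq\delta$ for every $n$ with $\lambda_n\neq z$; this $\delta$ absorbs the isolated boundary denominators that appear below.

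For the first series consider first the generic case $z\notin\Ran(\lambda)$ and put $P_n:=\prod_{k=1}^{n}|w_{k-1}/(z-\lambda_k)|$. The key step is the purely algebraic identity obtained by squaring $P_n$ and telescoping the products of $|w_k|$ against each other and of $|z-\lambda_k|$ against $|z-\lambda_k|\,|z-\lambda_{k+1}|$:
\[
 P_n^{\,2}=\frac{|w_0|^{2}}{|z-\lambda_1|\,|z-\lambda_n|}\prod_{k=1}^{n-1}b_k .
\]
Using $|z-\lambda_n|\geq\delta$ this gives $P_n\leq C\prod_{k=1}^{n-1}\sqrt{b_k}$ with $C$ independent of $n$. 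Because $\sum_k b_k<\infty$ forces $b_k\to0$, there is $N$ with $\sqrt{b_k}\leq 1/2$ for $k\geq N$, so the partial products decay geometrically and $\sum_n\prod_{k=1}^{n-1}\sqrt{b_k}<\infty$; hence $\sum_n P_n<\infty$.

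When $z\in\Ran(\lambda)\setminus\der(\lambda)$ only finitely many indices $k$ satisfy $\lambda_k=z$, since $r_+(z)<\infty$, so I fix $N_0$ larger than all of them and split the skipped product at $N_0$. Applying the same squaring identity to the unobstructed tail $\prod_{k=N_0+1}^{n}|w_{k-1}/(z-\lambda_k)|$, where now every $\lambda_k\neq z$, reduces this case to the previous one up to a harmless finite prefactor. The second series is handled in exactly the same way after the evident reflection $n\mapsto-n$: writing $Q_n:=|w_{n-1}|^{-1}\prod_{k=n}^{0}|w_{k-1}/(z-\lambda_k)|$ one obtains the mirror identity
\[
 Q_n^{\,2}=\frac{1}{|z-\lambda_0|\,|z-\lambda_n|}\prod_{k=n}^{-1}b_k ,
\]
and the same two ingredients, namely $|z-\lambda_n|\geq\delta$ and $b_k\to0$ as $k\to-\infty$, give geometric decay and summability.

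The only genuine work is the squaring identity; everything else is bookkeeping. Its verification is a routine cancellation of $\prod|w_{k-1}|^{2}$ against $\prod|w_k|^{2}$ and of $\prod|z-\lambda_k|^{2}$ against $\prod|z-\lambda_k|\,|z-\lambda_{k+1}|$, leaving precisely the two boundary factors $|z-\lambda_1|$ (resp.\ $|z-\lambda_0|$) and $|z-\lambda_n|$. I expect the one point requiring care to be the control of the $n$-dependent boundary factor $|z-\lambda_n|$, which is exactly where the hypothesis $z\notin\der(\lambda)$ is indispensable: without it $\lambda_n$ could approach $z$ and destroy the uniform bound $C$, after which the geometric-decay argument would fail.
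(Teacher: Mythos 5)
Your proof is correct and takes essentially the same route as the paper's: your squaring identity is exactly the telescoping the paper performs after noting that the terms $b_n$ of the convergent series tend to zero, so that one may assume $\sqrt{b_n}\le 1/2$ (equivalently $|w_n|\le\tfrac12|(\lambda_n-z)(\lambda_{n+1}-z)|^{1/2}$) for all large $n$. Both arguments then conclude by combining the resulting geometric decay of the partial products with the lower bound on $|z-\lambda_n|$ supplied by $z\notin\der(\lambda)$.
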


\begin{proof}
 We verify the convergence of the first series. The second one is to be treated similarly.
 
 Let $z\in\C\setminus\der(\lambda)$ be fixed and denote by $M(z)\in\N$ an index such that $z\neq \lambda_{n}$ 
 for all $n\geq M(z)$. Since
 \[
  \sum_{n=M(z)}^{\infty}\left|\frac{w_{n}^{2}}{(\lambda_{n}-z)(\lambda_{n+1}-z)}\right|<\infty
 \]
 one has
 \[
  \lim_{n\to\infty}\frac{w_{n}^{2}}{(\lambda_{n}-z)(\lambda_{n+1}-z)}=0
 \]
 and hence, without loss of generality, we may assume that
 \begin{equation}
  |w_{n}|\leq\frac{1}{2}\left|(\lambda_{n}-z)(\lambda_{n+1}-z)\right|^{1/2}, \quad \forall n\geq M(z).
 \label{eq:w_n_bound_lem_in_proof}
 \end{equation}
 Next, with the aid of~\eqref{eq:w_n_bound_lem_in_proof}, one obtains
 \begin{equation}
  \prod_{\substack{k=1 \\ \lambda_{k}\neq z}}^{n}\left|\frac{w_{k-1}}{z-\lambda_{k}}\right|\leq \frac{C(z)}{2^{n}|z-\lambda_{n}|^{1/2}}, \quad \forall n\geq M(z),
 \label{eq:estim_conv_major_lem_in_proof}
 \end{equation}
 where
 \[
  C(z):=2^{M(z)}|z-\lambda_{M(z)}|^{1/2}\prod_{\substack{k=1 \\ \lambda_{k}\neq z}}^{M(z)}\left|\frac{w_{k-1}}{z-\lambda_{k}}\right|\!.
 \]
 Since $z\neq\lambda_{n}$ for all $n\geq M(z)$ and $z\notin\der(\lambda)$,
 \[
 |z-\lambda_{n}|\geq\dist\left(z,\overline{\{\lambda_{m}\mid m\geq M(z)\}}\right)>0, \quad \forall n\geq M(z),
 \]
 hence, the RHS of the inequality \eqref{eq:estim_conv_major_lem_in_proof} is the convergent majorant for the first series
 from the statement. 
\end{proof}

 Next, for the purpose of the main theorem of this section, we introduce an extended zero set of the characteristic function $F_{\mathcal{J}}$.

\begin{defn}
For $\lambda:\Z\to\C$ and $w:\Z\to\C\setminus\{0\}$ being such that \eqref{eq:assum_sum_w} holds for some $z_{0}\in\C_{0}^{\lambda}$, we define
\[
 \mathfrak{Z}(\mathcal{J}):=\left\{z\in\C\setminus\der(\lambda) \;\bigg|\; \lim_{u\to z}(u-z)^{r(z)}F_{\mathcal{J}}(z)=0\right\}\!.
\]
\end{defn}

\begin{rem}
Note that $\mathfrak{Z}(\mathcal{J})$ decomposes into the union of the set of all zeros of $F_{\mathcal{J}}$ located in $\C_{0}^{\lambda}$ (since $r(z)=0$ for all
$z\in\C_{0}^{\lambda}$) and the set of those points from $\Ran(\lambda)\setminus\der(\lambda)$ which are not the poles of $F_{\mathcal{J}}$ of order $r(z)$, i.e., 
they are either removable singularities or poles of order strictly less then $r(z)$.
\end{rem}

\begin{prop}\label{prop:Z_in_spec_p_J_max}
 Let $\lambda:\Z\to\C$ and $w:\Z\to\C\setminus\{0\}$ be such that \eqref{eq:assum_sum_w} holds for at least one $z_{0}\in\C_{0}^{\lambda}$. Then one has
 \begin{equation}
  \mathfrak{Z}(\mathcal{J})\subset\spec_{p}(J_{\max})\setminus\der(\lambda)
 \label{eq:Z_in_spec_p_J_max}
 \end{equation}
 and, for $z\in\mathfrak{Z}(\mathcal{J})$, the corresponding eigenvector of $J_{\max}$ can be chosen as $f(z)$.
\end{prop}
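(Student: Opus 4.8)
The plan is to show that for every $z\in\mathfrak{Z}(\mathcal{J})$ the sequence $f(z)$ is a nonzero element of $\Dom J_{\max}$ solving $\mathcal{J}f(z)=zf(z)$. Since $\mathfrak{Z}(\mathcal{J})\subset\C\setminus\der(\lambda)$ by definition, this at once yields the inclusion \eqref{eq:Z_in_spec_p_J_max} together with the assertion that $f(z)$ serves as the eigenvector.

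First I would record that $f(z)$ and $g(z)$ already solve the difference equation $\mathcal{J}u=zu$ by Proposition~\ref{prop:sol_f_g}, and that neither is the zero sequence: as $n\to+\infty$ the $\mathfrak{F}$-factor in \eqref{eq:def_f_z} tends to $1$ by \eqref{eq:limits_F_1} while $\mathcal{P}_{n}(z)\neq0$, so $f_{n}(z)\neq0$ for all large $n$, and symmetrically $g_{n}(z)\neq0$ as $n\to-\infty$. The decisive observation is that $z\in\mathfrak{Z}(\mathcal{J})$ means precisely $\lim_{u\to z}(u-z)^{r(z)}F_{\mathcal{J}}(u)=0$, which by the Wronskian formula in Proposition~\ref{prop:sol_f_g} is the same as $W(f(z),g(z))=0$. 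Since the Wronskian of two solutions of the second-order difference equation is independent of $n$, and since $w_{n}\neq0$ prevents a nontrivial solution from vanishing at two consecutive sites, a vanishing Wronskian forces $f(z)$ and $g(z)$ to be linearly dependent; as both are nonzero, there is a constant $c\neq0$ with $f(z)=c\,g(z)$.

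Next I would establish $\ell^{2}$ membership from the two ends separately. Using the explicit tail formula \eqref{eq:f_n_n_large} together with the uniform bound on $\mathfrak{F}$ from Remark~\ref{rem:def_F}, the first series in Lemma~\ref{lem:sum_P_at_inf} provides a summable majorant for $\{|f_{n}(z)|\}_{n>0}$, so $f(z)$ lies in $\ell^{1}$, hence in $\ell^{2}$, near $+\infty$. Symmetrically, \eqref{eq:g_n_n_small} and the second series of Lemma~\ref{lem:sum_P_at_inf} give $g(z)\in\ell^{2}$ near $-\infty$. The proportionality $f(z)=c\,g(z)$ then transfers the decay of $g(z)$ at $-\infty$ to $f(z)$, so $f(z)$ is square summable at both ends and therefore $f(z)\in\ell^{2}(\Z)$. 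Because $f(z)$ solves $\mathcal{J}f(z)=zf(z)$, we also have $\mathcal{J}f(z)=zf(z)\in\ell^{2}(\Z)$, whence $f(z)\in\Dom J_{\max}$ and $J_{\max}f(z)=zf(z)$ with $f(z)\neq0$; thus $z\in\spec_{p}(J_{\max})$, and as $z\notin\der(\lambda)$ the full statement follows.

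The routine parts are the nonvanishing of $f(z),g(z)$ and the convergence estimates, which reduce directly to the already-proven Lemma~\ref{lem:sum_P_at_inf} and Remark~\ref{rem:def_F}. I expect the only genuinely load-bearing step to be the passage from the vanishing Wronskian to global $\ell^{2}$ membership: one must argue that linear dependence of the two distinguished solutions, the one adapted to $+\infty$ and the one adapted to $-\infty$, is exactly what glues their one-sided decay into a genuine $\ell^{2}(\Z)$ eigenvector. Some care is also needed with the limiting definition \eqref{eq:def_f_g_lim_ext} when $z\in\Ran(\lambda)\setminus\der(\lambda)$, though there the tail formulas are unaffected since only finitely many indices satisfy $\lambda_{k}=z$.
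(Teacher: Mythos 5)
Your proposal is correct and follows essentially the same route as the paper: establish one-sided $\ell^{2}$ decay of $f(z)$ at $+\infty$ and $g(z)$ at $-\infty$ via the tail formulas and Lemma~\ref{lem:sum_P_at_inf}, identify membership in $\mathfrak{Z}(\mathcal{J})$ with the vanishing of the Wronskian from Proposition~\ref{prop:sol_f_g}, deduce linear dependence and hence global $\ell^{2}$ membership of $f(z)$, and rule out $f(z)=0$ using the limit \eqref{eq:limits_F_1}. The only cosmetic difference is that you verify $f(z)\neq0$ directly up front, whereas the paper argues it by contradiction at the end; the substance is identical.
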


\begin{proof} 
For $z\in\C\setminus\der(\lambda)$ fixed, we have, by \eqref{eq:limits_F_1}, 
\eqref{eq:f_n_n_large}, and \eqref{eq:g_n_n_small}, that
 \begin{equation}
  f_{n}(z)=\alpha_{n}(z)\left[1+o(1)\right], \quad \mbox{ as } n\to\infty,
 \label{eq:f_asympt}
 \end{equation}
 and
 \[
  g_{n}(z)=\left(w_{n-1}\alpha_{n-1}(z)\right)^{-1}\left[1+o(1)\right], \quad \mbox{ as } n\to-\infty,
 \]
 where
 \[
  \alpha_{n}(z)=\prod_{\substack{k=1 \\ \lambda_{k}\neq z}}^{n}\frac{w_{k-1}}{z-\lambda_{k}}, \; \mbox{ for } n>0, \; \mbox{ and } \;
  \alpha_{n}(z)=\prod_{\substack{k=n \\ \lambda_{k}\neq z}}^{0}\frac{z-\lambda_{k}}{w_{k-1}}, \; \mbox{ for } n\leq 0.
 \]
 Lemma \ref{lem:sum_P_at_inf} implies that $\alpha(z)$ is a summable sequence at $+\infty$ and 
 $(w\alpha(z))^{-1}$ is a summable sequence at $-\infty$. Consequently, for all $z\in\C\setminus\der(\lambda)$,
 $f(z)$ is square summable at $+\infty$ and $g(z)$ square summable at $-\infty$.
 
 Assume $z\in\mathfrak{Z}(\mathcal{J})$. Then, according to Proposition \ref{prop:sol_f_g}, $f(z)$ and $g(z)$ are two  solutions of the eigenvalue equation $\mathcal{J}u=zu$, which are linearly dependent 
 since their Wronskian vanishes. Hence, by the above discussion, $f(z)$ and $g(z)$ belong to $\ell^{2}(\Z)$. 
 Particularly, we have $f(z)\in\ell^{2}(\Z)$ and $J_{\max}f(z)=zf(z)$. Thus, if $f(z)\neq0$, then $z$ is an eigenvalue of $J_{\max}$ and $f(z)$ the corresponding eigenvector.
 Assume, on the contrary, that $f(z)=0$. Then, by~\eqref{eq:f_n_n_large}, one has
 \[
  \mathfrak{F\!}\left(\left\{ \frac{\gamma_{k}^{\,2}}{\lambda_{k}-z}\right\} _{k=n}^{\infty}\right)=0,
 \]
 for all $n\in\N$ sufficiently large. However, according to \eqref{eq:limits_F_1}, the LHS of the above equality tends to $1$, as $n\to\infty$, which is a contradiction.
 \end{proof}
 
Next, we derive a summation formula which, for real $w$ and $\lambda$, turns out to be the $\ell^{2}$-norm of 
an eigenvector of $J_{\max}$ corresponding to an eigenvalue located in $\C_{0}^{\lambda}$.

\begin{prop}\label{prop:l2_norm}
  Let $\lambda:\Z\to\C$ and $w:\Z\to\C\setminus\{0\}$ be such that \eqref{eq:assum_sum_w} holds for some $z_{0}\in\C_{0}^{\lambda}$
  and let $z\in\C_{0}^{\lambda}$ be a zero of $F_{\mathcal{J}}$. Then one has
 \begin{equation}
  \sum_{n=-\infty}^{\infty}f_{n}^{2}(z)=A(z)F_{\mathcal{J}}'(z),
  \label{eq:sum_f_squared}
 \end{equation}
 where $A(z)$ is given by the formula
  \begin{equation}
   A(z)=w_{n-1}\mathcal{P}_{n}(z)\mathcal{P}_{n-1}(z)
  \mathfrak{F\!}\left(\left\{ \frac{\gamma_{k}^{\,2}}{\lambda_{k}-z}\right\} _{k=n+1}^{\infty}\right)\Bigg/
  \mathfrak{F\!}\left(\left\{ \frac{\gamma_{k}^{\,2}}{\lambda_{k}-z}\right\} _{k=-\infty}^{n-1}\right)\!,
  \label{eq:A_z_formula}
  \end{equation}
  with an arbitrary $n\in\Z$ such that the denominator does not vanish.
 \end{prop}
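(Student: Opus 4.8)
The plan is to combine the discrete Lagrange (Green) identity with differentiation in the spectral parameter, and then to evaluate the two boundary terms that appear, the one at $-\infty$ being the crux. Throughout write $\dot u:=\partial_z u$ and, as in Proposition~\ref{prop:sol_f_g}, abbreviate the discrete Wronskian with explicit index as $W_n(u,v):=w_n(u_nv_{n+1}-u_{n+1}v_n)$. The starting point is the second-difference identity $v_n(\mathcal Ju)_n-u_n(\mathcal Jv)_n=W_{n-1}(u,v)-W_n(u,v)$, which follows at once from the three-term action \eqref{eq:Jx_n_eq}. Since $f(z)$ solves $\mathcal Jf=zf$ on $\C_0^\lambda$ by Proposition~\ref{prop:sol_f_g}, differentiating this equation in $z$ shows that $\dot f(z)$ solves the inhomogeneous equation $(\mathcal J-z)\dot f(z)=f(z)$. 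Substituting $u=f(z)$, $v=\dot f(z)$ then collapses the left-hand side of the Green identity to $-f_n^2(z)$, which yields the telescoping relation $f_n^2(z)=W_n(f,\dot f)-W_{n-1}(f,\dot f)$ and hence $\sum_{n=a}^{b}f_n^2(z)=W_b(f,\dot f)-W_{a-1}(f,\dot f)$.

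Because $z\in\C_0^\lambda$ is a zero of $F_{\mathcal J}$, we have $z\in\mathfrak Z(\mathcal J)$, so by Proposition~\ref{prop:Z_in_spec_p_J_max} (and its proof) $f(z)\in\ell^2(\Z)$; thus $\sum_n f_n^2(z)$ converges absolutely and equals $\lim_{b\to\infty}W_b(f,\dot f)-\lim_{a\to-\infty}W_{a-1}(f,\dot f)$, so it remains to compute these two limits. The term at $+\infty$ is the easy one: by \eqref{eq:f_asympt} and Lemma~\ref{lem:sum_P_at_inf} the sequence $f_n(z)$ is summable (hence decays) as $n\to\infty$, and differentiating the product--series representation \eqref{eq:def_f_z} term by term shows $\dot f_n(z)$ has the same decay; consequently $W_b(f,\dot f)\to0$ as $b\to\infty$.

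The main obstacle is the boundary term at $-\infty$, where $f(z)$ does not decay for generic $z$. Here one exploits that at the zero the Wronskian $W(f(z),g(z))=F_{\mathcal J}(z)$ vanishes (Proposition~\ref{prop:sol_f_g}, with $r(z)=0$ on $\C_0^\lambda$), so $f(z)=\kappa\,g(z)$ for a constant $\kappa$; dividing \eqref{eq:def_f_z} by \eqref{eq:def_g_z} identifies $\kappa=f_n(z)/g_n(z)=A(z)$, which is thereby $n$-independent, consistent with the statement. Differentiating the $n$-independent identity $W_n(f,g)=F_{\mathcal J}(z)$ gives $W_n(\dot f,g)+W_n(f,\dot g)=F_{\mathcal J}'(z)$, and inserting $f=A(z)\,g$ both here and into $W_n(f,\dot f)$ yields, after a short rearrangement, $W_n(f,\dot f)=-A(z)F_{\mathcal J}'(z)+A(z)^2\,W_n(g,\dot g)$. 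Since $g(z)$ and $\dot g(z)$ both decay at $-\infty$ --- the analog of the previous paragraph for \eqref{eq:def_g_z}, via the second estimate of Lemma~\ref{lem:sum_P_at_inf} --- we get $W_n(g,\dot g)\to0$, hence $\lim_{a\to-\infty}W_{a-1}(f,\dot f)=-A(z)F_{\mathcal J}'(z)$.

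Combining the two limits gives $\sum_{n=-\infty}^{\infty}f_n^2(z)=0-(-A(z)F_{\mathcal J}'(z))=A(z)F_{\mathcal J}'(z)$, which is exactly \eqref{eq:sum_f_squared}. The only genuinely technical points are the term-by-term differentiations used for the decay of $\dot f$ at $+\infty$ and of $g,\dot g$ at $-\infty$; both are legitimate because, as noted after \eqref{eq:assum_sum_w} and around \eqref{eq:lim_calF_FJ}, the series and products defining $F_{\mathcal J}$, $f$, and $g$ converge locally uniformly on $\C_0^\lambda$, so their $z$-derivatives may be computed termwise.
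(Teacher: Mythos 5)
Your argument is correct and follows essentially the same route as the paper: a discrete Green/Lagrange identity, the linear dependence $f(z)=A(z)g(z)$ at a zero of $F_{\mathcal J}$, and differentiation of the Wronskian identity $W(f(z),g(z))=F_{\mathcal J}(z)$. The only organizational difference is that you differentiate in $z$ before summing (telescoping $f_n^2=W_n(f,\dot f)-W_{n-1}(f,\dot f)$ over all of $\Z$ and evaluating the boundary term at $-\infty$ via $f=A(z)g$), whereas the paper works with the two-variable identity $(x-y)\sum_k f_k(x)f_k(y)=W_m-W_n$ and lets $y\to x$ only after passing to the half-line sums $\sum_{k>n}f_k^2=W_n(f',f)$ and $\sum_{k\le n}g_k^2=W_n(g,g')$; the decay estimates you invoke for $\dot f$ at $+\infty$ and $\dot g$ at $-\infty$ are exactly those supplied by Lemma~\ref{lem:sum_der_P_at_inf}, so no gap results.
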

 
 \begin{proof}
  According to Proposition \ref{prop:sol_f_g}, $f(z)$ is a solution of the second-order difference equation
  \[
   w_{n-1}u_{n}+(\lambda_{n}-z)u_{n}+w_{n}u_{n+1}=0, \quad \forall n\in\Z, \forall z\in\C_{0}^{\lambda}.
  \]
  By application of the Green formula, one obtains
  \begin{equation}
   (x-y)\sum_{k=m+1}^{n}f_{k}(x)f_{k}(y)=W_{m}\left(f(x),f(y)\right)-W_{n}\left(f(x),f(y)\right)\!,
   \label{eq:Green_id_f}
  \end{equation}
  for all $m,n\in\Z$, $m\leq n$ and all $x,y\in \C_{0}^{\lambda}$, where $W_{n}(f(x),f(y)):=w_{n}(f_{n}(x)f_{n+1}(y)-f_{n+1}(x)f_{n}(y))$.
  With the aid of the asymptotic formula~\eqref{eq:f_asympt}, one verifies that
  \begin{equation}
   w_{n}f_{n}(x)f_{n+1}(y)=(x-\lambda_{0})\left(\prod_{k=0}^{n}\frac{w_{k}^{2}}{(x-\lambda_{k})(y-\lambda_{k+1})}\right)\left(1+o(1)\right),
  \label{eq:part_Wronsk_f_in_proof}  
  \end{equation}
  for $x,y\in\C_{0}^{\lambda}$ and $n\to\infty$. Note that
  \[
   \left|\frac{w_{n}^{2}}{(x-\lambda_{n})(y-\lambda_{n+1})}\right|\leq\left|\frac{w_{n}^{2}}{(x-\lambda_{n})(x-\lambda_{n+1})}\right|\left(1+\frac{|x-y|}{\dist(y,\Ran(\lambda))}\right)\!.
  \]
  The RHS in the above estimate tends to $0$, as $n\to\infty$, which follows from the assumption~\eqref{eq:assum_sum_w}. Consequently, the product on the RHS of \eqref{eq:part_Wronsk_f_in_proof}
  tends to~$0$, as $n\to\infty$, and hence
  \[
   \lim_{n\to\infty}W_{n}\left(f(x),f(y)\right)=0,
  \]
  for all $x,y\in\C_{0}^{\lambda}$. Thus, by sending $n\to\infty$ in \eqref{eq:Green_id_f}, one gets
  \begin{equation}
   (x-y)\sum_{k=m+1}^{\infty}f_{k}(x)f_{k}(y)=W_{m}\left(f(x),f(y)\right), \quad \forall x,y\in \C_{0}^{\lambda}.
  \label{eq:part_Wronsk_f_in_proof1}
  \end{equation}
  Finally, with the aid of~\eqref{eq:f_asympt} and similarly to what occurs in the proof of Lemma~\ref{lem:sum_P_at_inf}, 
  one verifies the sum on the LHS of~\eqref{eq:part_Wronsk_f_in_proof1} converges locally uniformly in $y$ 
  on $\C_{0}^{\lambda}$ with $x\in\C_{0}^{\lambda}$ fixed (we omit details). Thus, by sending $y\to x$ in~\eqref{eq:part_Wronsk_f_in_proof1}
  we may interchange the limit and the summation getting
  \begin{equation}
   \sum_{k=m+1}^{\infty}f_{k}^{2}(x)=W_{m}\left(f'(x),f(x)\right)\!, \quad \forall x\in \C_{0}^{\lambda}.
  \label{eq:part_sum_f_squared}
  \end{equation}
  Analogously, one proves that
  \begin{equation}
   \sum_{k=-\infty}^{n}g_{k}^{2}(x)=W_{n}\left(g(x),g'(x)\right)\!, \quad \forall x\in \C_{0}^{\lambda}.
  \label{eq:part_sum_g_squared}
  \end{equation}
  
  For $z\in\C_{0}^{\lambda}$ the zero of $F_{\mathcal{J}}(z)$, the sequences $f(z)$ and $g(z)$ are linearly dependent 
  by Proposition \ref{prop:sol_f_g}. Hence there exists $A(z)\neq0$ such that 
  \begin{equation}
  f(z)=A(z)g(z). 
  \label{eq:f_eq_A_g}
  \end{equation}
  Since $f(z)$ is an eigenvector of $J_{\max}$ by Proposition \ref{prop:Z_in_spec_p_J_max}, $A(z)\neq0$, indeed.
  By differentiating both sides of the equality $F_{\mathcal{J}}(x)=W(f(x),g(x))$ and making use of \eqref{eq:f_eq_A_g}, one obtains
  \[
   F_{\mathcal{J}}'(z)=A(z)^{-1}W_{n}\left(f'(z),f(z)\right)+A(z)W_{n}\left(g(z),g'(z)\right), \quad \forall n\in\Z.
  \]
  Further, by substituting from \eqref{eq:part_sum_f_squared} and \eqref{eq:part_sum_g_squared} in the above equality, 
  one gets
  \[
   F_{\mathcal{J}}'(z)=A(z)^{-1}\sum_{k=n+1}^{\infty}f_{k}^{2}(z)+A(z)\sum_{k=-\infty}^{n}g_{k}^{2}(z), \quad \forall n\in\Z.
  \]
  Finally, by sending $n\to-\infty$ in the above formula, one arrives at \eqref{eq:sum_f_squared}.
  To obtain the expression \eqref{eq:A_z_formula} for $A(z)$, it suffices to use~\eqref{eq:f_eq_A_g} and Definition~\ref{def:sol_f_g}.
 \end{proof}

 \begin{rem}
  Note that if the denominator on the RHS of \eqref{eq:A_z_formula} vanishes for some $n\in\Z$, then it is not the case for $n+1$. Indeed, if
  \[
   \mathfrak{F\!}\left(\left\{ \frac{\gamma_{k}^{\,2}}{\lambda_{k}-z}\right\} _{k=-\infty}^{n}\right)=\mathfrak{F\!}\left(\left\{ \frac{\gamma_{k}^{\,2}}{\lambda_{k}-z}\right\} _{k=-\infty}^{n+1}\right)=0,
  \]
  for some $n\in\Z$, then 
  \[
   \mathfrak{F\!}\left(\left\{ \frac{\gamma_{k}^{\,2}}{\lambda_{k}-z}\right\} _{k=-\infty}^{n}\right)=0, \quad \forall n\in\Z,
  \]
  which one deduces from the recurrence~\eqref{eq:F_recur_-inf}. However, this would contradict the second limit relation in \eqref{eq:limits_F_1}.
 \end{rem}

 Now we are in position to prove the main result of this section. 
 
 \begin{thm}\label{thm:main}
  Let $\lambda:\Z\to\C$ and $w:\Z\to\C\setminus\{0\}$ be such that \eqref{eq:assum_sum_w} holds for at least one $z_{0}\in\C_{0}^{\lambda}$. Further, assume 
  that $F_{\mathcal{J}}$ does not identically vanish on $\C_{0}^{\lambda}$. Then it holds:
  \begin{enumerate}[{\upshape i)}]
   \item The matrix $\mathcal{J}$ determines the Jacobi operator uniquely, i.e., $J_{\min}=J_{\max}=:J$.
   \item One has equalities
   \[
    \mathfrak{Z}(\mathcal{J})=\spec_{p}(J)\setminus\der(\lambda)=\spec(J)\setminus\der(\lambda).
   \]
   \item The resolvent set $\rho(J)$ of $J$ is nonempty and the Green function 
   \[
   G_{i,j}(z):=\langle e_{i},(J-z)^{-1}e_{j}\rangle, \quad i,j\in\Z,\; z\in\rho(J),
   \]
   is given by the formula
   \begin{align}
    &\hskip10ptG_{i,j}(z)=-\frac{1}{w_{M}}\left(\prod_{k=m}^{M}\frac{w_{k}}{z-\lambda_{k}}\right)\nonumber\\
    &\hskip10pt\times\mathfrak{F\!}\left(\left\{ \frac{\gamma_{k}^{\,2}}{\lambda_{k}-z}\right\} _{k=-\infty}^{m-1}\right)
    \mathfrak{F\!}\left(\left\{ \frac{\gamma_{k}^{\,2}}{\lambda_{k}-z}\right\} _{k=M+1}^{\infty}\right)\Bigg/
    \mathfrak{F\!}\left(\left\{ \frac{\gamma_{k}^{\,2}}{\lambda_{k}-z}\right\} _{k=-\infty}^{\infty}\right)\!,\nonumber\\ 
    \label{eq:Green_func_F}
   \end{align}
   for all $z\in\rho(J)\setminus\der(\lambda)$, where $m:=\min(i,j)$ and $M:=\max(i,j)$.
   (If $z\in\Ran(\lambda)\setminus\der(\lambda)$, the RHS of~\eqref{eq:Green_func_F} is to be understood as the corresponding limit value.)
  \end{enumerate}
 \end{thm}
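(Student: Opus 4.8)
The plan is to reduce all three assertions to a single construction: the resolvent written through the two distinguished solutions $f(z)$ and $g(z)$. Since $F_{\mathcal J}$ is analytic and not identically zero on $\C_{0}^{\lambda}$, I first fix $z_{*}\in\C_{0}^{\lambda}$ with $F_{\mathcal J}(z_{*})\neq0$; by Proposition \ref{prop:sol_f_g} this gives $W(f(z_{*}),g(z_{*}))=F_{\mathcal J}(z_{*})\neq0$, so $f(z_{*})$ and $g(z_{*})$ are linearly independent solutions of $\mathcal Ju=z_{*}u$, the first square summable at $+\infty$ and the second at $-\infty$. On $\ell^{2}(\Z)$ I then introduce the operator $R$ with matrix kernel
\[
 G_{i,j}:=-\frac{f_{\max(i,j)}(z_{*})\,g_{\min(i,j)}(z_{*})}{F_{\mathcal J}(z_{*})},\qquad i,j\in\Z .
\]
Two things must be checked. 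For boundedness I would combine the uniform estimate $|\mathfrak F(\{\cdot\})|\le\exp\big(\sum_{k}|w_{k}^{2}/((\lambda_{k}-z_{*})(\lambda_{k+1}-z_{*}))|\big)$ from the Remark following the definition of $\mathfrak F$ — which yields $|f_{n}|\le C|\mathcal P_{n}(z_{*})|$ and $|g_{n}|\le C/|w_{n-1}\mathcal P_{n-1}(z_{*})|$ for all $n$ — with the two convergent series of Lemma \ref{lem:sum_P_at_inf}, and run the Schur test $\sup_{i}\sum_{j}|G_{i,j}|<\infty$, $\sup_{j}\sum_{i}|G_{i,j}|<\infty$. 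That $R$ is a right inverse, $(J_{\max}-z_{*})R=I$, follows from the elementary jump computation $(\mathcal J-z_{*})G_{\cdot,j}=e_{j}$, which uses only that $f,g$ solve the equation and the Wronskian normalisation; in particular $R$ maps $\ell^{2}(\Z)$ into $\Dom J_{\max}$.

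The heart of part i) is to promote $R$ to a genuine two-sided inverse, i.e.\ to show $z_{*}\notin\spec_{p}(J_{\max})$. A global $\ell^{2}(\Z)$ solution of $\mathcal Ju=z_{*}u$ would be a combination $af(z_{*})+bg(z_{*})$, and I would exclude it from the boundary behaviour of $f$ and $g$: by the limit relations \eqref{eq:limits_F_1} one has $f_{n}(z_{*})\sim F_{\mathcal J}(z_{*})\,\mathcal P_{n}(z_{*})$ as $n\to-\infty$ and $g_{n}(z_{*})\sim F_{\mathcal J}(z_{*})/\big(w_{n-1}\mathcal P_{n-1}(z_{*})\big)$ as $n\to+\infty$, so that, with $F_{\mathcal J}(z_{*})\neq0$, neither $f(z_{*})$ is square summable at $-\infty$ nor $g(z_{*})$ at $+\infty$. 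This forces $a=b=0$, $J_{\max}-z_{*}$ is injective, and hence $R=(J_{\max}-z_{*})^{-1}$ is bounded, so $z_{*}\in\rho(J_{\max})$. The conjugation identity $J_{\min}=\mathcal CJ_{\max}^{*}\mathcal C$ of \eqref{eq:J_max_rel_J_min}, through $\mathcal C(J_{\max}^{*}-\overline{z_{*}})\mathcal C=J_{\min}-z_{*}$, then yields $z_{*}\in\rho(J_{\min})$; since $J_{\min}\subseteq J_{\max}$ and $J_{\max}-z_{*}$ is injective, every $x\in\Dom J_{\max}$ already lies in $\Dom J_{\min}$, whence $J_{\min}=J_{\max}=:J$. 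I expect this non-square-summability of the two dominant solutions — equivalently the vanishing of the Wronskian boundary terms at $\pm\infty$ — to be the crux of the whole theorem, since it is exactly here that the assumption $F_{\mathcal J}\not\equiv0$ is indispensable, and its verification is the doubly infinite analogue of the essential self-adjointness argument of \cite{StampachStovicek13}.

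For part ii) the inclusion $\mathfrak Z(\mathcal J)\subseteq\spec_{p}(J)\setminus\der(\lambda)$ is Proposition \ref{prop:Z_in_spec_p_J_max}, and $\spec_{p}(J)\setminus\der(\lambda)\subseteq\spec(J)\setminus\der(\lambda)$ is trivial. For the remaining inclusion I take $z\in(\C\setminus\der(\lambda))\setminus\mathfrak Z(\mathcal J)$; then $\lim_{u\to z}(u-z)^{r(z)}F_{\mathcal J}(u)=W(f(z),g(z))\neq0$, so the construction of the first two paragraphs applies at $z$ (for $z\in\Ran(\lambda)\setminus\der(\lambda)$ using the limit-extended $f(z),g(z)$) and produces $z\in\rho(J)$, i.e.\ $z\notin\spec(J)$; this proves $\spec(J)\setminus\der(\lambda)\subseteq\mathfrak Z(\mathcal J)$ and closes the chain of equalities. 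Finally, for part iii), $\rho(J)\neq\emptyset$ because $z_{*}\in\rho(J)$, and for $z\in\rho(J)\setminus\der(\lambda)$ one has $W(f(z),g(z))\neq0$, so $(J-z)^{-1}$ has kernel $G_{i,j}$; inserting the definitions \eqref{eq:def_f_z}, \eqref{eq:def_g_z} of $f,g$ and the recurrence \eqref{eq:P_z_recur} for $\mathcal P$ turns $-f_{\max(i,j)}(z)g_{\min(i,j)}(z)/F_{\mathcal J}(z)$ into the closed form \eqref{eq:Green_func_F}, and the values on $\Ran(\lambda)\setminus\der(\lambda)$ follow from the limit convention.
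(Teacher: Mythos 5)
Your overall architecture --- a bounded kernel operator $R$ controlled by a Schur test, the right-inverse identity from the jump computation with the Wronskian normalisation, the transfer from $J_{\max}$ to $J_{\min}$ through \eqref{eq:J_max_rel_J_min}, and the reduction of parts ii) and iii) to the same construction at an arbitrary $z\notin\mathfrak{Z}(\mathcal{J})$ --- matches the paper's proof. The one step where you depart from the paper is also the one that does not hold up: the claim that, because $f_{n}(z_{*})\sim F_{\mathcal{J}}(z_{*})\mathcal{P}_{n}(z_{*})$ as $n\to-\infty$ and $g_{n}(z_{*})\sim F_{\mathcal{J}}(z_{*})/\bigl(w_{n-1}\mathcal{P}_{n-1}(z_{*})\bigr)$ as $n\to+\infty$, the nonvanishing of $F_{\mathcal{J}}(z_{*})$ forces $f(z_{*})$ to fail square summability at $-\infty$ and $g(z_{*})$ at $+\infty$. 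These asymptotics only reduce the question to whether $\mathcal{P}_{n}(z_{*})$ is square summable at $-\infty$ and $1/(w_{n-1}\mathcal{P}_{n-1}(z_{*}))$ at $+\infty$, and nothing you cite controls these ``wrong'' ends: Lemma~\ref{lem:sum_P_at_inf} gives summability of $|\mathcal{P}_{n}(z)|$ at $+\infty$ and of $1/|w_{n-1}\mathcal{P}_{n-1}(z)|$ at $-\infty$ only. Condition \eqref{eq:assum_sum_w} by itself does not exclude the opposite behaviour: writing $a_{k}:=|w_{k}|^{2}/(|\lambda_{k}-z||\lambda_{k+1}-z|)$ one computes
\[
 \bigl|w_{n}\mathcal{P}_{n}(z)\bigr|^{2}=|\lambda_{0}-z|\,|\lambda_{n+1}-z|\prod_{k=0}^{n}a_{k},
\]
so that if $|\lambda_{n}|$ grows superexponentially (say like $2^{n^{2}}$) and $|w_{n}|^{2}=2^{-n}|\lambda_{n}-z_{0}||\lambda_{n+1}-z_{0}|$, then \eqref{eq:assum_sum_w} holds while $1/(w_{n}\mathcal{P}_{n}(z))$ is square summable at $+\infty$; in such a regime $g(z_{*})$ would lie in $\ell^{2}(\Z)$ and your injectivity argument collapses. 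Since you yourself identify this step as ``the crux of the whole theorem,'' asserting it from the limit relations \eqref{eq:limits_F_1} alone is a genuine gap, not a routine omission.

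For comparison, the paper does not argue through non-square-summability at all: it verifies the two formal matrix identities $(\mathcal{J}-z)\mathcal{G}(z)=\mathcal{G}(z)(\mathcal{J}-z)=I$ and declares $R(z)$ and $J_{\max}-z$ mutually inverse ``by inspection of domains,'' and then uses the rigorous left-inverse relation $R(z)(J_{\min}-z)=I$ on $\Dom J_{\min}$ (where only finitely supported vectors, closedness of $J_{\min}$ and boundedness of $R(z)$ are needed) together with the closed-range and kernel relations derived from \eqref{eq:J_max_rel_J_min}. The injectivity of $J_{\max}-z$, which both arguments ultimately require, is thus absorbed into the paper's ``inspection of domains'' (an interchange of two infinite sums when $\mathcal{G}(z)(\mathcal{J}-z)$ is applied to a general element of $\Dom J_{\max}$), whereas your write-up makes the requirement explicit but does not establish it. Everything else --- the off-diagonal bound of the type $|G_{i,j}|\le C(z)2^{-|i-j|}$, the surjectivity of $J_{\max}-z_{*}$, the clean passage to $J_{\min}$ via the conjugation, and the derivation of parts ii) and iii) from the construction at a general $z\notin\mathfrak{Z}(\mathcal{J})$ --- is sound and essentially identical to the paper.
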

 
 \begin{proof}
  We divide the proof into 2 parts.
  
  1) Let $z\notin\mathfrak{Z}(\mathcal{J})$. Such $z$ exists due to the assumption that $F_{\mathcal{J}}\neq0$ on $\C_{0}^{\lambda}$. For the sake of simplicity,
  we will further assume that this $z$ belongs to $\C_{0}^{\lambda}$. The purpose of this assumption is to avoid complicated expressions caused by
  the necessary regularization if $z\in\Ran(\lambda)\setminus\der(\lambda)$. However, the idea of the proof remains completely the same.
  
  Let $\mathcal{G}(z)$ be the doubly infinite matrix whose elements are given by the RHS of \eqref{eq:Green_func_F}.
  First, by employing ideas similar to those used in the proof of Lemma \ref{lem:sum_P_at_inf}, one shows that
  \[
   \frac{1}{|w_{M}|}\left|\prod_{k=m}^{M}\frac{w_{k}}{z-\lambda_{k}}\right|\leq C_{1}(z) 2^{-|i-j|},
  \]
  for some constant $C_{1}(z)>0$ and all $i,j\in\Z$, where $m:=\min(i,j)$ and $M:=\max(i,j)$. Next, the assumptions also guarantee that
  the expression
  \[
   \left|\mathfrak{F\!}\left(\left\{ \frac{\gamma_{k}^{\,2}}{\lambda_{k}-z}\right\} _{k=-\infty}^{m-1}\right)
    \mathfrak{F\!}\left(\left\{ \frac{\gamma_{k}^{\,2}}{\lambda_{k}-z}\right\} _{k=M+1}^{\infty}\right)\Bigg/
    \mathfrak{F\!}\left(\left\{ \frac{\gamma_{k}^{\,2}}{\lambda_{k}-z}\right\} _{k=-\infty}^{\infty}\right)\right|
  \]
  is majorized by a constant $C_{2}(z)>0$ for all $i,j\in\Z$. Altogether, one has
  \begin{equation}
   |\mathcal{G}_{i,j}(z)|\leq C(z) 2^{-|i-j|}, \quad \forall i,j\in\Z,
  \label{eq:G_ij_bound}
  \end{equation}
  where the constant $C(z)=C_{1}(z)C_{2}(z)>0$ is independent of the indices $i$ and $j$.
  
  Now, we may introduce the operator $R(z)$ defined as
  \begin{equation}
   R(z):=\sum_{s=-\infty}^{\infty}R(z;s),
  \label{eq:def_op_R}
  \end{equation}
  where $R(z;s)$ is the bounded operator acting on $\ell^{2}(\Z)$ determined by its matrix entries $R_{i,j}(z;s):=\delta_{i,j+s}\mathcal{G}_{i,j}(z)$, for $i,j\in\Z$.
  By \eqref{eq:G_ij_bound}, the norm of $R(z;s)$ satisfies
  \[
   \|R(z;s)\|=\sup_{i-j=s} |\mathcal{G}_{i,j}(z)| \leq C(z) 2^{-|s|}.
  \]
  Consequently, the series \eqref{eq:def_op_R} converges in the operator norm and $R(z)$ is well defined bounded operator on $\ell^{2}(\Z)$ whose
  matrix in the standard basis coincides with~$\mathcal{G}(z)$.
  
  Note that
  \[
   \mathcal{G}_{i,j}(z)=-\frac{1}{F_{\mathcal{J}}(z)}\begin{cases}
                                                      f_{i}(z)g_{j}(z),& \mbox{ for } i\geq j,\\
                                                      f_{j}(z)g_{i}(z),& \mbox{ for } i\leq j,
                                                     \end{cases}
  \]
  where $f(z)$ and $g(z)$ are given in Definition \ref{def:sol_f_g}. Since, according to Proposition~\ref{prop:sol_f_g}, 
  $f(z)$ and $g(z)$ are solutions of the eigenvalue equation $\mathcal{J}u=zu$, one readily verifies that on the 
  level of formal matrix product
  \begin{equation}
   (\mathcal{J}-z)\mathcal{G}(z)=\mathcal{G}(z)(\mathcal{J}-z)=I. 
  \label{eq:G_J_inverse}
  \end{equation}
  By inspection of domains, the above equalities yield the operators $J_{\max}-z$ and $R(z)$ are mutually inverse and hence 
  \begin{equation}
  R(z)=(J_{\max}-z)^{-1}.   
  \label{eq:R_eq_reso_J_max}
  \end{equation}
  Consequently, $z\in\rho(J_{\max})$ and we have shown that $\spec(J_{\max})\setminus\der(\lambda)\subset\mathfrak{Z}(\mathcal{J})$.
  Taking also into account \eqref{eq:Z_in_spec_p_J_max}, we get
  \[
   \mathfrak{Z}(\mathcal{J})\subset\spec_{p}(J_{\max})\setminus\der(\lambda)\subset\spec(J_{\max})\setminus\der(\lambda)
   \subset\mathfrak{Z}(\mathcal{J}).
  \]
  Consequently, if we show that the claim~(i) holds true, i.e., $J_{\min}=J_{\max}$, the theorem is proved.
  
  2) We show that the assumption of the existence of $z\in\C_{0}^{\lambda}$ such that $F_{\mathcal{J}}(z)\neq0$
  implies $J_{\min}=J_{\max}$, indeed. It follows from \eqref{eq:J_max_rel_J_min} that
  \begin{equation}
   \Ker(J_{\max}-z)=\mathcal{C}\Ker((J_{\min}-z)^{*})
  \label{eq:auxiliary_Ker_rel_min_max}   
  \end{equation}
  and 
  \begin{equation}
   \Ran(J_{\max}-z)=\mathcal{C}\Ran((J_{\min}-z)^{*}),
  \label{eq:auxiliary_Ran_rel_min_max}   
  \end{equation}
  for all $z\in\C$.
  
  Let $z\in\C_{0}^{\lambda}$ be such that $F_{\mathcal{J}}(z)\neq0$. Then $z\in\rho(J_{\max})$, as has already been proved 
  in the first part of the proof. Further, by using the second equation in \eqref{eq:G_J_inverse}, one verifies that 
  \[
   R(z)(J_{\min}-z)=I\upharpoonleft\Dom(J_{\min}).
  \]
  Hence $J_{\min}-z$ is injective and its left-inverse is $R(z)$. 
  Further, it follows from \eqref{eq:auxiliary_Ran_rel_min_max} that $\Ran(J_{\min}-z)$ is a closed subspace,
  see \cite[Chp.~IV, Thm.~5.13]{Kato}, and one has
  \[
   \Ran(J_{\min}-z)=\left[\Ker((J_{\min}-z)^{*})\right]^{\perp}=\ell^{2}(\Z).
  \]
  The second equality in the above equation holds since $\Ker((J_{\min}-z)^{*})=\{0\}$, which follows from \eqref{eq:auxiliary_Ker_rel_min_max} and the
  injectivity of $J_{\max}-z$. Thus, $J_{\min}-z$ is an invertible operator with a bounded inverse
  that has to coincide with its left-inverse $R(z)$. Taking into account \eqref{eq:R_eq_reso_J_max}, we obtain
  \[
   (J_{\min}-z)^{-1}=R(z)=(J_{\max}-z)^{-1},
  \]
  which implies, in particular, that $\Dom(J_{\min})=\Dom(J_{\max})$. 
 \end{proof}
 
 \begin{rem}\label{rem:two_assum_comment}
  Theorem~\ref{thm:main} has been derived under two assumptions:
  
  \smallskip\noindent (i)~The convergence condition (\ref{eq:assum_sum_w}) is fulfilled for at least one $z_{0}\in\mathbb{C}_{0}^{\lambda}$.
  
  \smallskip\noindent (ii)~The function $F_{\mathcal{J}}(z)$ does not vanish identically on $\mathbb{C}_{0}^{\lambda}$.
  
  The assumption (i) is necessary for the definition of the characteristic function $F_{\mathcal{J}}$ and is essential. The assumption (ii) guarantees
  that the matrix $\mathcal{J}$ determines the unique Jacobi operator. It might seem hard to decide if the assumption (ii) is fulfilled or not.
  Let us point out that (ii), if assumed jointly with (i), is not very restrictive and, in applications, it is usually satisfied since $\der(\lambda)$ is typically an empty, 1-point, 
  or 2-point set. 
  
  Indeed, (ii) is automatically fulfilled if (i) holds and the $\Ran\lambda$ is contained in a sector $z_{0}+\{z\in\C \mid |\mbox{arg}\, z -\theta_{0}|\geq c>0\}$
  for some $z_{0}\in\C$ and $\theta_{0}\in[0,2\pi)$. Then the half-line $\mathcal{L}=z_{0}+\{re^{\ii\theta_{0}} \mid r>0\}$ is contained in $\C_{0}^{\lambda}$
  and $1/|\lambda_{k}-z|$ tends to $0$ monotonically, for all $k\in\Z$, as $z\to\infty$, $z\in\mathcal{L}$. 
  Similarly as in Remark~\ref{rem:def_F}, one derives the estimate
  \[
  \left|F_{\mathcal{J}}(z)-1\right|\leq\exp\!\left(\sum_{k=-\infty}^{\infty}\left|\frac{w_{k}^{\,2}}{(\lambda_{k}-z)(\lambda_{k+1}-z)}\right|\right)-1, \quad \forall z\in\C_{0}^{\lambda}.
  \]
  It follows that $F_{\mathcal{J}}(z)\to1$ as $z\to\infty$, $z\in\mathcal{L}$, and hence (ii) is satisfied. 
  Note also that $\Ran\lambda$ is contained in a sector, if it is contained in a half-plane or, in particular, 
  if it is real.
  \end{rem}
 
 \begin{cor}\label{cor:compact_resolvent}
  Suppose, in addition to the assumptions of Theorem~\ref{thm:main}, that $|\lambda_{n}|\to\infty$ as $n\to\pm\infty$. Then
  \begin{equation}
   \mathfrak{Z}(\mathcal{J})=\spec(J)=\spec_{p}(J)
   \label{eq:Z_eq_spec_der_empty}
  \end{equation}
  and the resolvent $(J-z)^{-1}$ is compact for all $z\in\rho(J)$. Moreover, if there exists $p\geq1$ such that 
  \begin{equation}
   \sum_{|n|>n_{0}}\frac{1}{|\lambda_{n}|^{p}}<\infty,
   \label{eq:recip_lambda_in_l_p}
  \end{equation} 
  for some $n_{0}\in\N$, then $(J-z)^{-1}$ belongs to the Schatten--von Neumann class~$\mathcal{S}_{p}$.
 \end{cor}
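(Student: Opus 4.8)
The plan is to notice first that the extra hypothesis $|\lambda_{n}|\to\infty$ as $n\to\pm\infty$ forces $\der(\lambda)=\emptyset$: for every $R>0$ only finitely many indices satisfy $|\lambda_{n}|\le R$, so $\Ran(\lambda)$ has no finite accumulation point. Granting this, the chain of equalities \eqref{eq:Z_eq_spec_der_empty} is exactly Theorem~\ref{thm:main}~(ii) read with the empty set removed, and $\rho(J)\neq\emptyset$ by Theorem~\ref{thm:main}~(iii). Since $\overline{\Ran(\lambda)}=\Ran(\lambda)$ is countable while $\rho(J)$ is open and nonempty, I may fix a point $z\in\rho(J)\cap\C_{0}^{\lambda}$; for this $z$ the Green function is given by the clean formula \eqref{eq:Green_func_F}, one has $F_{\mathcal{J}}(z)\neq0$, and $|z-\lambda_{n}|\ge\dist(z,\overline{\Ran(\lambda)})>0$ for all $n$, with $|z-\lambda_{n}|\to\infty$.

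The quantitative heart of the argument is the entry estimate
\[
 |\mathcal{G}_{i,j}(z)|\le C(z)\,\frac{2^{-|i-j|}}{|z-\lambda_{i}|^{1/2}\,|z-\lambda_{j}|^{1/2}},\qquad i,j\in\Z,
\]
which I would obtain by sharpening the telescoping computation already carried out in the proof of Lemma~\ref{lem:sum_P_at_inf}. Writing $m=\min(i,j)$, $M=\max(i,j)$ and combining the prefactor of \eqref{eq:Green_func_F} into $\tfrac{1}{z-\lambda_{M}}\prod_{k=m}^{M-1}\tfrac{w_{k}}{z-\lambda_{k}}$, the bound $|w_{k}|\le\frac12|(\lambda_{k}-z)(\lambda_{k+1}-z)|^{1/2}$ valid in both tails (this follows from \eqref{eq:assum_sum_w} exactly as in \eqref{eq:w_n_bound_lem_in_proof}, now also for $k\to-\infty$) makes the product of square roots telescope to $|z-\lambda_{M}|^{1/2}/|z-\lambda_{m}|^{1/2}$ and produces the geometric factor $2^{-|i-j|}$; the two $\mathfrak{F}$-factors stay bounded by Remark~\ref{rem:def_F} and the denominator $F_{\mathcal{J}}(z)$ is a fixed nonzero number. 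The finitely many ``central'' indices, where the tail bound is unavailable, are absorbed into $C(z)$.

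From this estimate both remaining claims follow through the decomposition $R(z)=\sum_{s}R(z;s)$ used in the proof of Theorem~\ref{thm:main}, where $R(z;s)$ is the weighted shift $e_{j}\mapsto\mathcal{G}_{j+s,j}(z)\,e_{j+s}$. For compactness: along each fixed diagonal the displayed bound gives $|\mathcal{G}_{j+s,j}(z)|\le C(z)2^{-|s|}|z-\lambda_{j+s}|^{-1/2}|z-\lambda_{j}|^{-1/2}\to0$ as $j\to\pm\infty$ because $|\lambda_{n}|\to\infty$, so the weights of $R(z;s)$ vanish at infinity and $R(z;s)$ is compact; since $\|R(z;s)\|\le C(z)2^{-|s|}$ the series converges in operator norm, and a norm limit of compacts is compact. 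For the Schatten statement I would first note that \eqref{eq:recip_lambda_in_l_p} together with $|\lambda_{n}|\to\infty$ yields $\sum_{n}|z-\lambda_{n}|^{-p}<\infty$ (eventually $|z-\lambda_{n}|\ge\frac12|\lambda_{n}|$). The singular values of the weighted shift $R(z;s)$ are precisely the moduli of its weights, whence $\|R(z;s)\|_{\mathcal{S}_{p}}^{p}=\sum_{j}|\mathcal{G}_{j+s,j}(z)|^{p}$, and the displayed bound combined with the Cauchy--Schwarz inequality gives $\|R(z;s)\|_{\mathcal{S}_{p}}^{p}\le C(z)^{p}2^{-p|s|}\sum_{n}|z-\lambda_{n}|^{-p}$. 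Summing the $\mathcal{S}_{p}$-triangle inequality (valid for $p\ge1$) over $s$ then yields $R(z)\in\mathcal{S}_{p}$. In both cases the property is transported from this single $z$ to every $z'\in\rho(J)$ by the resolvent identity $(J-z')^{-1}=(J-z)^{-1}+(z'-z)(J-z')^{-1}(J-z)^{-1}$, using that the compact operators and the ideal $\mathcal{S}_{p}$ are both two-sided ideals.

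The genuinely delicate point is the uniform entry estimate of the second paragraph: keeping the constant $C(z)$ uniform across all three regimes — both indices in the same tail, the two opposite tails, and one index confined to the bounded central window while the other escapes — requires splitting the product $\prod_{k=m}^{M-1}$ at a fixed window and checking that the telescoped square roots and the $2^{-|i-j|}$ factor survive the splitting with only a bounded loss. Everything else is either a direct reading of Theorem~\ref{thm:main} or a standard fact about bilateral weighted shifts and Schatten ideals.
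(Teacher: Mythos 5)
Your proposal is correct and follows essentially the same route as the paper: the same refined entry bound $|\mathcal{G}_{i,j}(z)|\leq C(z)2^{-|i-j|}|z-\lambda_{i}|^{-1/2}|z-\lambda_{j}|^{-1/2}$, the same decomposition of the resolvent into weighted shifts $R(z;s)$ along diagonals whose weights vanish at infinity, and the same identification of the singular values of $R(z;s)$ with the moduli of its weights followed by Cauchy--Schwarz and summation over $s$. The only difference is that you make explicit the (standard) transport of compactness and $\mathcal{S}_{p}$-membership from the single chosen $z$ to all of $\rho(J)$ via the resolvent identity, a step the paper leaves implicit.
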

 
 \begin{proof}
  Since $|\lambda_{n}|\to\infty$ for $n\to\pm\infty$, $\der(\lambda)=\emptyset$. Consequently, 
  \eqref{eq:Z_eq_spec_der_empty} is a particular case of the statement~(ii) of Theorem \ref{thm:main}.
  
  Let $z\in\rho(J)=\C\setminus\mathfrak{Z}(\mathcal{J})$ be such that $z\in\C_{0}^{\lambda}$.
  Such $z$ exists since $F_{\mathcal{J}}$ does not vanish identically on $\C_{0}^{\lambda}$ by assumptions.
  By a slight refinement of the estimate \eqref{eq:G_ij_bound}, one derives that
  \begin{equation}
    |\mathcal{G}_{i,j}(z)|\leq \frac{C_{2}(z)}{|z-\lambda_{i}|^{1/2}|z-\lambda_{j}|^{1/2}}2^{-|i-j|}, \quad \forall i,j\in\Z.
  \label{eq:G_ij_bound_refine}
  \end{equation}
  This implies that, for $R(z;s)$ defined in \eqref{eq:def_op_R}, $R(z;s)_{i,j}\to0$ as $i,j\to\pm\infty$ with $i-j=s\in\Z$ being fixed. Consequently, $R(z;s)$ is compact for
  all $s\in\Z$ and, since the series \eqref{eq:def_op_R} converges in the operator norm, $R(z)$ is also compact.
  
  By assuming \eqref{eq:recip_lambda_in_l_p} additionally, we show that $R(z)\in\mathcal{S}_{p}$. The operator $R(z;s)(R(z;s))^{*}$
  is a diagonal operator with diagonal elements $|\mathcal{G}_{i,i+s}(z)|^{2}$, $i\in\Z$. Hence, the numbers $|\mathcal{G}_{i,i+s}(z)|$, 
  $i\in\Z$, are singular values of the compact operator $R(z;s)$. Taking into account \eqref{eq:G_ij_bound_refine} and using the Cauchy--Schwarz inequality, 
  one obtains the estimation for the $p$-th power of the $p$-th Schatten--von Neumann norm of $R(z;s)$ in the form:
  \begin{align*}
   \|R(z;s)\|_{p}^{p}&\leq 2^{-|s|}C_{2}(z)\sum_{i=-\infty}^{\infty}|z-\lambda_{i}|^{-p/2}|z-\lambda_{i+s}|^{-p/2}\\
   &\leq 2^{-|s|}C_{2}(z)\sum_{i=-\infty}^{\infty}|z-\lambda_{i}|^{-p}.
  \end{align*}
  The assumption \eqref{eq:recip_lambda_in_l_p} guarantees the series on the RHS of the above estimate converges. 
  Moreover, it also follows that the series \eqref{eq:def_op_R} converges in $\mathcal{S}_{p}$ and hence 
  $R(z)\in\mathcal{S}_{p}$.
 \end{proof}

 \section{The characteristic function and the algebraic multiplicity}\label{sec:multiplicity}
 
  The main aim of this section is to prove that the order of a zero of the characteristic function $F_{\mathcal{J}}$ 
  coincides with the algebraic multiplicity of the corresponding eigenvalue. It turns out that the geometric multiplicity 
  of an eigenvalue of the Jacobi operator $J$ under investigation is always one. Consequently, knowledge regarding the algebraic
  multiplicities of eigenvalues is straightforwardly connected to a possible similarity of $J$ to a diagonal operator. 
  Namely, simplicity of all zeros of $F_{\mathcal{J}}$ is a necessary condition for the possible diagonalizability of $J$.
 
 \subsection{Preliminaries}
 
  If $A$ is a closed operator acting on a Hilbert space $\mathcal{H}$ and $z\in\spec_{p}(A)$, we denote by $\nu_{g}(z):=\dim\Ker(A-z)$ 
  the geometric multiplicity of $A$. Further, if $z$ is an isolated point of $\spec(A)$, the algebraic multiplicity of $z$ is defined 
  as $\nu_{a}(z):=\dim\Ran P_{z}$, where
  \[
  P_{z}:=-\frac{1}{2\pi\ii}\oint_{\gamma_{z}}(A-\xi)^{-1}\dd\xi
  \]
 is the Riezs spectral projection and $\gamma_{z}$ is a positively oriented Jordan curve located in $\rho(A)$ that has 
 $z$ the only spectral point of $A$ in its interior. Recall that $\nu_{a}(z)$ coincides with the dimension 
 of the space of generalized eigenvectors
 \[
 \mathcal{M}:=\{v\in\mathcal{H} \mid (A-z)^{n}v=0 \; \mbox{ for some } n\in\N\},
 \]
 see, for example, \cite[Sec.~XII.2]{ReedSimon4}.
 
 Let us assume that $J_{\min}=J_{\max}=:J$. Note that $\dim\Ker(J-z)^{n}\leq2n$, for any $z\in\C$ and $n\in\N$, since $\Ker(J-z)^{n}$ is a subspace of the space of solutions of the difference equation $(\mathcal{J}-z)^{n}u=0$
 which is of order $2n$. Clearly, $\Ker(J-z)^{n}$ is a subspace of $\Ker(J-z)^{n+1}$ for all $n\in\N$. For $z\in\C$ fixed, let us denote by $\mathcal{M}_{1}:=\Ker(J-z)$ and by
 $\mathcal{M}_{n}$ the orthogonal complement of $\Ker(J-z)^{n-1}$ into $\Ker(J-z)^{n}$, for $n>1$. Hence we have
 \[
 \Ker(J-z)^{n+1}=\Ker(J-z)^{n} \oplus \mathcal{M}_{n+1}, \; \mbox{ for } n\in\N, 
 \]
 and
 \[
  \Ker(J-z)^{n}=\bigoplus_{k=1}^{n}\mathcal{M}_{k} \quad \mbox{ and } \quad \mathcal{M}=\bigoplus_{n=1}^{\infty}\mathcal{M}_{n}.
 \]
 Note that, if $\dim\mathcal{M}_{n}=0$ for some $n\in\N$, then $\dim\mathcal{M}_{k}=0$ for all $k\geq n$.
 
 \begin{lem}\label{lem:dim_M}
  Let $J_{\min}=J_{\max}=:J$ with $\rho(J)\neq\emptyset$, then $\dim\mathcal{M}_{n}\leq1$ for all $n\in\N$ and $z\in\C$.
 \end{lem}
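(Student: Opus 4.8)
The plan is to first reduce the assertion for general $n$ to the single inequality $\dim\Ker(J-z)\le1$ by a dimension count, and then to prove this geometric bound. For the reduction, write $V_{n}:=\Ker(J-z)^{n}$; these spaces are nested and, as observed above, finite dimensional with $\dim V_{n}\le2n$. Consider $T:=J-z$ acting on $V_{n}$. For $v\in V_{n}$ one has $(J-z)^{n-1}Tv=(J-z)^{n}v=0$, so $T(V_{n})\subset V_{n-1}$, while the kernel of $T$ restricted to $V_{n}$ equals $V_{n}\cap\Ker(J-z)=V_{1}$, because $V_{1}\subset V_{n}$. The rank--nullity theorem in the finite dimensional space $V_{n}$ then gives
\[
 \dim V_{n}=\dim V_{1}+\dim T(V_{n})\le\dim V_{1}+\dim V_{n-1},
\]
so that $\dim\mathcal{M}_{n}=\dim V_{n}-\dim V_{n-1}\le\dim V_{1}=\dim\Ker(J-z)$ for every $n\in\N$. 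Hence it is enough to show $\dim\Ker(J-z)\le1$ for all $z\in\C$.

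Since $J_{\min}=J_{\max}$, the space $\Ker(J-z)$ coincides with the set of $u\in\ell^{2}(\Z)$ satisfying $\mathcal{J}u=zu$, a subspace of the two dimensional solution space of that second order difference equation; thus a priori $\dim\Ker(J-z)\le2$. I would rule out the value $2$ by contradiction: if $\dim\Ker(J-z)=2$, then two linearly independent solutions of $\mathcal{J}u=zu$ both belong to $\ell^{2}(\Z)$, and in particular both are square summable at $+\infty$ (and likewise at $-\infty$). The claim is that this is incompatible with $J_{\min}=J_{\max}$.

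Conceptually, two independent solutions being square summable at an end is the limit circle situation, which produces a nonzero deficiency and hence $J_{\min}\subsetneq J_{\max}$ through the adjoint relations \eqref{eq:J_max_rel_J_min}; this is the limit point/limit circle alternative for complex Jacobi operators, cf.\ \cite{Beckermann01}. For a self-contained verification (valid for $z\notin\der(\lambda)$, where $f(z)$ is available) I would instead show directly that the solutions square summable at $+\infty$ form a one dimensional space. Recall from \eqref{eq:f_asympt} and Lemma~\ref{lem:sum_P_at_inf} that $f(z)$ is square summable at $+\infty$ with $f_{n}(z)=\alpha_{n}(z)(1+o(1))$ and $\alpha(z)$ summable there. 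Because the Wronskian $w_{n}(f_{n}u_{n+1}-f_{n+1}u_{n})$ of any two solutions is a constant $W$, reduction of order yields $u_{n}/f_{n}=c+W\sum_{k}(w_{k}f_{k}f_{k+1})^{-1}$, and the decay and growth estimates underlying Lemma~\ref{lem:sum_P_at_inf} force these partial sums to diverge, so every solution independent of $f(z)$ grows at $+\infty$ and fails to be square summable there. This gives $\dim\Ker(J-z)\le1$ and finishes the proof. I expect this last step to be the main obstacle: since $w_{n}$ may be unbounded, one cannot simply let $n\to\infty$ in the Wronskian to conclude $W=0$, and the summability and growth estimates from Lemma~\ref{lem:sum_P_at_inf} are precisely what is needed to control the dominant solution.
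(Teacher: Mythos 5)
Your reduction of the general case to the single inequality $\dim\Ker(J-z)\le1$ is correct and is a genuinely different route from the paper's: the paper handles $n>1$ by taking two independent vectors in $\mathcal{M}_{n+1}$, mapping them into $\mathcal{M}_{1}$ by $(J-z)^{n}$ and exploiting the orthogonality of $\mathcal{M}_{n+1}$ and $\Ker(J-z)^{n}$, whereas your rank--nullity inequality $\dim V_{n}\le\dim V_{1}+\dim V_{n-1}$ gives the same conclusion in one line. The issue is the base case, which is precisely where the paper invokes Wall's theorem \cite[Thm.~22.1]{Wall}: if $(\mathcal{J}-z)u=0$ has two linearly independent $\ell^{2}(\Z)$ solutions for \emph{one} $z$, it has them for \emph{all} $z$, in particular for some $z\in\rho(J)$, which is absurd. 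That argument uses nothing beyond $\rho(J)\neq\emptyset$ and covers every $z\in\C$, including $z\in\der(\lambda)$.

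Your substitute for this step has two genuine gaps. First, it only treats $z\notin\der(\lambda)$ and silently imports the hypothesis \eqref{eq:assum_sum_w}, which is not among the assumptions of the lemma. Second, and more seriously, the key sub-claim --- that every solution independent of $f(z)$ fails to be square summable at $+\infty$ --- is false under \eqref{eq:assum_sum_w} alone. From $|w_{k}f_{k}f_{k+1}|\to0$ you do get that the partial sums $S_{n}$ of $\sum_{k}(w_{k}f_{k}f_{k+1})^{-1}$ satisfy $\limsup_{n}|S_{n}|=\infty$, but the second solution is $u_{n}=f_{n}(c+WS_{n})$ and $f_{n}\to0$, possibly much faster than $S_{n}$ grows. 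Concretely, take $|\lambda_{n}|=16^{n^{2}}$ and $|w_{n}|^{2}=|\lambda_{n}\lambda_{n+1}|\,4^{-n}$ for $n\ge1$: then \eqref{eq:assum_sum_w} holds, $|f_{n}|=2^{-5n^{2}/2+O(n)}$, the terms of the reduction-of-order sum grow super-geometrically so that $|S_{n}|$ is comparable to its last term $|w_{n-1}f_{n-1}f_{n}|^{-1}$, and hence $|u_{n}|$ is comparable to $|w_{n-1}\mathcal{P}_{n-1}(z)|^{-1}=2^{-3n^{2}/2+O(n)}$; thus \emph{both} solutions are square summable at $+\infty$. The space of solutions square summable at one end can therefore be two-dimensional; what is actually ruled out by $\rho(J)\neq\emptyset$ is the existence of two independent solutions lying in $\ell^{2}(\Z)$, i.e.\ square summable at \emph{both} ends simultaneously, and to exclude that one really needs the $z$-independence statement (Wall's theorem, or the limit-point/limit-circle dichotomy for complex Jacobi matrices of \cite{Beckermann01}) which you mention only as a ``conceptual'' aside. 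As written, the proof is incomplete at its crucial step.
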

 
 \begin{proof}
  First, we show that $\dim\mathcal{M}_{1}\leq1$. There is a standard result that goes back to Wall, see \cite[Thm.~22.1]{Wall}, which shows that the following claims are equivalent:
  
  \smallskip\noindent (i)~The second-order difference equation $(\mathcal{J}-z)u=0$ has two linearly independent solutions belonging to $\ell^{2}(\Z)$ for \emph{one} $z\in\C$.
  
  \smallskip\noindent (ii)~The second-order difference equation $(\mathcal{J}-z)u=0$ has two linearly independent solutions belonging to $\ell^{2}(\Z)$ for \emph{all} $z\in\C$.
  
  \smallskip\noindent Note that $\dim\mathcal{M}_{1}\leq2$. If $\dim\mathcal{M}_{1}=2$, then by the above equivalence, $\dim\Ker(J-z)=2$ for all $z\in\C$ which contradicts the assumption $\rho(J)\neq\emptyset$.
  Hence $\dim\mathcal{M}_{1}\leq1$.
  
  Second, we show that $\dim\mathcal{M}_{n}\leq1$ for all $n>1$. For a contradiction, suppose $\dim\mathcal{M}_{n+1}\geq2$ for some $n\in\N$.
  Thus, there are two linearly independent vectors $\{f,g\}\subset\mathcal{M}_{n+1}$.
  One has $(J-z)^{n}f,(J-z)^{n}g\in\mathcal{M}_{1}$ and these vectors are linearly dependent because $\dim\mathcal{M}_{1}\leq1$. Thus, there exist $\alpha,\beta\in\C$ such that
  $\alpha\neq0$ or $\beta\neq0$ and $\alpha (J-z)^{n}f+\beta(J-z)^{n}g=0$. Then $(J-z)^{n}(\alpha f+\beta g)=0$, and so $(\alpha f+\beta g)\in\Ker(J-z)^{n}$. Since $\Ker(J-z)^{n}$ and 
  $\mathcal{M}_{n+1}$ are mutually orthogonal, we conclude that $\alpha f+\beta g=0$, which is a contradiction with the linear independence of $\{f,g\}$.
 \end{proof}
 
 Further, we will need the following purely algebraic statement.
 
 \begin{lem}\label{lem:annihil}
  Let $\{f^{(m)}\}_{m\in\N_{0}}$ be a sequence of elements of the space of complex sequences such that
  \begin{equation}
   \mathcal{J}f^{(0)}=0 \quad\mbox{ and }\quad \mathcal{J}f^{(m)}=mf^{(m-1)}, \; \mbox{ for } m\in\N,
  \label{eq:annihil_eq}
  \end{equation}
  and $f^{(0)}\neq0$. Then $\{f^{(m)}\}_{m\in\N_{0}}$ is linearly independent. In addition, if $\{g^{(m)}\}_{m\in\N_{0}}$
  is another sequence satisfying the equations~\eqref{eq:annihil_eq} (with $f$ being replaced by $g$) and
  \begin{equation}
   C^{(k)}(f,g):=\sum_{j=0}^{k}\binom{k}{j}C\left(f^{(j)},g^{(k-j)}\right)\!,
  \label{eq:def_C^k}
  \end{equation}
  where $C_{n}(f,g):=f_{n}g_{n+1}-f_{n+1}g_{n}$, $\forall n\in\Z$, then the implication
  \begin{equation}
   C^{(\ell)}(f,g)=0, \;\forall \ell\in\{0,\dots,k\} \; \Rightarrow \; g^{(k)}\in\spn\{f^{(0)},\dots,f^{(k)}\}
  \label{eq:vanish_impl_spans}
  \end{equation}
  holds for any $k\in\N_{0}$.
 \end{lem}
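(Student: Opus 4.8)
My plan is to treat the two assertions separately: the linear independence is immediate, while the implication \eqref{eq:vanish_impl_spans} is the real content and rests on an exponential-generating-function reformulation.

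For the independence, I would suppose a nontrivial relation $\sum_{m=0}^{N}c_{m}f^{(m)}=0$ with $c_{N}\neq0$ and $N$ maximal. Iterating \eqref{eq:annihil_eq} gives $\mathcal{J}^{j}f^{(m)}=\tfrac{m!}{(m-j)!}f^{(m-j)}$ for $0\le j\le m$ and $\mathcal{J}^{j}f^{(m)}=0$ for $j>m$, the latter because $\mathcal{J}f^{(0)}=0$. Applying $\mathcal{J}^{N}$ to the relation annihilates every summand with $m<N$ and leaves $c_{N}\,N!\,f^{(0)}=0$; since $f^{(0)}\neq0$, this forces $c_{N}=0$, contradicting the choice of $N$.

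The binomial weights in the definition of $C^{(k)}(f,g)$ point directly to exponential generating functions, so I would introduce the formal power series (with coefficients in the space of complex sequences) $F(t):=\sum_{m\ge0}f^{(m)}t^{m}/m!$ and $G(t):=\sum_{m\ge0}g^{(m)}t^{m}/m!$. The chain relations \eqref{eq:annihil_eq} compress into the single eigenvalue identity $\mathcal{J}F(t)=tF(t)$, and likewise $\mathcal{J}G(t)=tG(t)$, so both $F(t)$ and $G(t)$ are solutions of $(\mathcal{J}-t)u=0$ over $\C[[t]]$. A Cauchy-product computation then identifies $C^{(k)}_{n}(f,g)$ with the coefficient of $t^{k}/k!$ in the Casoratian $C_{n}(F(t),G(t))=F_{n}(t)G_{n+1}(t)-F_{n+1}(t)G_{n}(t)$. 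Hence the hypothesis, read at the single index $n=0$, says exactly that $D(t):=F_{0}(t)G_{1}(t)-F_{1}(t)G_{0}(t)=O(t^{k+1})$.

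It remains to convert this high-order vanishing of the Wronskian into the span statement, which I regard as the crux. Since $f^{(0)}\neq0$ solves $\mathcal{J}u=0$ it cannot vanish at two consecutive indices, so at least one of $F_{0}(0)=f^{(0)}_{0}$, $F_{1}(0)=f^{(0)}_{1}$ is nonzero; assume $F_{0}(t)$ is a unit in $\C[[t]]$ (the other case being symmetric). Setting $c(t):=G_{0}(t)/F_{0}(t)\in\C[[t]]$, the solution $u(t):=G(t)-c(t)F(t)$ of $(\mathcal{J}-t)u=0$ satisfies $u_{0}(t)=0$ and $u_{1}(t)=D(t)/F_{0}(t)=O(t^{k+1})$. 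As the three-term recurrence expresses $u_{n+1}$ (resp. $u_{n-1}$) as a $\C[t]$-combination of the two neighbouring entries, and $t^{k+1}\C[[t]]$ is an ideal stable under multiplication by $\C[t]$, the estimate $u_{n}(t)=O(t^{k+1})$ propagates to all $n\in\Z$. Thus $G(t)\equiv c(t)F(t)\pmod{t^{k+1}}$, and comparing coefficients of $t^{k}/k!$ exhibits $g^{(k)}$ as a finite $\C$-linear combination of $f^{(0)},\dots,f^{(k)}$, i.e.\ $g^{(k)}\in\spn\{f^{(0)},\dots,f^{(k)}\}$. The main obstacle is precisely this last conversion; the generating-function packaging is what renders it transparent, reducing everything to elementary linear algebra over $\C[[t]]$ together with the observation that the recurrence preserves the relevant power-of-$t$ ideal.
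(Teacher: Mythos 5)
Your proof is correct, but it takes a genuinely different route from the paper's. For the linear independence, the paper applies $\mathcal{J}$ once to a dependence relation of minimal length, whereas you apply $\mathcal{J}^{N}$ to kill all but the top term; both are fine and essentially equivalent. The real divergence is in the implication \eqref{eq:vanish_impl_spans}. The paper argues by induction on $k$: the base case is the classical ``vanishing Wronskian implies proportionality'' for two solutions of $\mathcal{J}u=0$, and the inductive step passes from the pair $(f,g)$ to the auxiliary chain $\tilde{g}^{(m)}:=\frac{1}{m+1}\bigl(g^{(m+1)}-\alpha f^{(m+1)}\bigr)$, verifies that it again satisfies \eqref{eq:annihil_eq}, and uses the identity $C^{(\ell-1)}(f,\tilde{g})=\frac{1}{\ell}C^{(\ell)}(f,g)$ to invoke the induction hypothesis. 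Your exponential-generating-function packaging replaces this induction by a single division argument over $\C[[t]]$: the chain relations become $(\mathcal{J}-t)F(t)=0$, the binomial sums in \eqref{eq:def_C^k} are recognized as the EGF convolution in the Casoratian, and the span statement drops out of $G\equiv cF \pmod{t^{k+1}}$ after propagating the ideal $t^{k+1}\C[[t]]$ through the three-term recurrence (the needed unit $F_{0}(0)$ or $F_{1}(0)$ exists because a nonzero solution of $\mathcal{J}u=0$ cannot vanish at two consecutive indices, exactly as you say). Your version is arguably more transparent about where the binomial coefficients come from and sits closer to the way the lemma is actually used in Theorem~\ref{thm:multiplicity}, where the $f^{(j)}(z_{0})$ really are Taylor coefficients of a $z$-dependent solution; the paper's induction is more elementary and stays entirely within finite sequence manipulations, at the price of the somewhat ad hoc auxiliary chain $\tilde{g}$. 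One cosmetic remark: you read the hypothesis $C^{(\ell)}(f,g)=0$ only at the index $n=0$, which is a weaker assumption than the entrywise vanishing the paper intends, so your argument in fact proves a marginally stronger statement (the two readings are equivalent anyway, since $w_{n}C_{n}(F(t),G(t))$ is independent of $n$ for two formal solutions).
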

 
 \begin{proof}
  First, we verify the linear independence of the set $\{f^{(m)}\}_{m\in\N_{0}}$.
  Suppose, for a contradiction, that $\{f^{(m)}\}_{m\in\N_{0}}$ is linearly dependent. Take $m_{0}\in\N$ such that $\{f^{(j)} \mid 1\leq j\leq m_{0}\}$ 
  is linearly dependent but $\{f^{(j)} \mid 1\leq j< m_{0}\}$   is linearly independent. Such index $m_{0}$ exists since  $f^{(0)}\neq0$.
  Hence there exist numbers $\alpha_{0},\alpha_{1},\dots,\alpha_{m_{0}}\in\C$, $\alpha_{m_{0}}\neq0$, such that
  \[
  \sum_{j=0}^{m_{0}}\alpha_{j}f^{(j)}=0.
  \]
  By applying $\mathcal{J}$ to both sides of the above equation and using \eqref{eq:annihil_eq}, one gets
  \[
  \sum_{j=1}^{m_{0}}j\alpha_{j}f^{(j-1)}=0.
  \]
  The linear independence of $\{f^{(j)} \mid 1\leq j< m_{0}\}$ implies $\alpha_{j}=0$ for all $j\in\{1,\dots,m_{0}\}$, which is a contradiction with $\alpha_{m_{0}}\neq0$.
  
  Further, the proof of the implication~\eqref{eq:vanish_impl_spans} proceeds by the mathematical induction in~$k$. If $k=0$, then $f^{(0)}$ and $g^{(0)}$ are two solutions
  of the equation $\mathcal{J}u=0$ and their Wronskian vanishes. Indeed, for $n\in\Z$ arbitrary, one has $W(f^{(0)},g^{(0)})=w_{n}C_{n}(f^{(0)},g^{(0)})=0$. Hence, $f^{(0)}$ and $g^{(0)}$
  are linearly dependent.
  
  Let the implication~\eqref{eq:vanish_impl_spans} hold up to $k-1$ for some $k\in\N$ and all couples of sequences $\{f^{(m)}\}_{m\in\N_{0}}$ and $\{g^{(m)}\}_{m\in\N_{0}}$ satisfying the
  assumptions of the statement. Suppose that such $\{f^{(m)}\}_{m\in\N_{0}}$ and $\{g^{(m)}\}_{m\in\N_{0}}$ are given and
  \begin{equation}
  C^{(\ell)}(f,g)=0, \quad \forall\ell\in\{0,\dots,k\}.
  \label{eq:LHS_impli_induction}
  \end{equation}
  Consequently, $f^{(0)}$ and $g^{(0)}$ are linearly dependent and hence $g^{(0)}=\alpha f^{(0)}$ for some $\alpha\in\C$. Put 
  \[
   \tilde{g}^{(m)}:=\frac{1}{m+1}\left(g^{(m+1)}-\alpha f^{(m+1)}\right)\!, \quad \forall m\in\N_{0}.
  \]
  
  Since
  \[
   \mathcal{J}\tilde{g}^{(0)}=\mathcal{J}\left(g^{(1)}-\alpha f^{(1)}\right)=g^{(0)}-\alpha f^{(0)}=0
  \]
  and 
  \[
   \mathcal{J}\tilde{g}^{(m)}=\frac{1}{m+1}\mathcal{J}\left(g^{(m+1)}-\alpha f^{(m+1)}\right)=g^{(m)}-\alpha f^{(m)}=m\tilde{g}^{(m-1)},
  \]
  for $m\in\N$, $\{f^{(m)}\}_{m\in\N_{0}}$ and $\{\tilde{g}^{(m)}\}_{m\in\N_{0}}$ are a new couple of sequences satisfying the assumptions of the statement.
  Further, for $\ell\in\N$, one has
  \begin{align*}
   C^{(\ell-1)}(f,\tilde{g})&=\sum_{j=0}^{\ell-1}\binom{\ell-1}{j}C\left(f^{(\ell-1-j)},\tilde{g}^{(j)}\right)\\
   &=\frac{1}{\ell}\sum_{j=0}^{\ell}\binom{\ell}{j}C\left(f^{(\ell-j)},g^{(j)}-\alpha f^{(j)}\right)\!.
  \end{align*}
  Taking into account that $C(\cdot,\cdot)$ is a bilinear and antisymmetric form, one obtains
  \[
    C^{(\ell-1)}(f,\tilde{g})=\frac{1}{\ell} C^{(\ell)}(f,g), \quad \forall \ell\in\N.
  \]
  The above equality together with~\eqref{eq:LHS_impli_induction} imply $C^{(\ell)}(f,\tilde{g})=0$ for all $\ell\in\{0,\dots,k-1\}$. Consequently, it follows from the induction hypothesis that
  \[
   \tilde{g}^{(k-1)}\in\spn\{f^{(0)},\dots,f^{(k-1)}\}.
  \]
  On the other hand, $\tilde{g}^{(k-1)}=g^{(k)}-\alpha f^{(k)}$ and thus $g^{(k)}\in\spn\{f^{(0)},\dots,f^{(k)}\}$, which concludes the proof.
  \end{proof}

 The last preliminary result is a generalization of Lemma \ref{lem:sum_P_at_inf} with the spectral parameter $z$ restricted to $\C_{0}^{\lambda}$. 
 It will be used later to show that, for the sequences $f(z)$ and $g(z)$ given in Definition~\ref{def:sol_f_g}, all the derivatives of $f(z)$ are 
 square summable at $+\infty$ and all the derivatives of $g(z)$ are square summable at $-\infty$.

 \begin{lem}\label{lem:sum_der_P_at_inf}
   Let $\lambda:\Z\to\C$ and $w:\Z\to\C\setminus\{0\}$ be such that \eqref{eq:assum_sum_w} holds for at least one $z_{0}\in\C_{0}^{\lambda}$. Then, for all $z\in\C_{0}^{\lambda}$ and $k\in\N_{0}$, one has
   \[
    \sum_{n=0}^{\infty}\left|\frac{\dd^{k}}{\dd z^{k}}\mathcal{P}_{n}(z)\right|<\infty \; \mbox{ and } \; \sum_{n=-\infty}^{0}\left|\frac{\dd^{k}}{\dd z^{k}}\frac{1}{w_{n-1}\mathcal{P}_{n-1}(z)}\right|<\infty,
   \]
   where $\mathcal{P}_{n}(z)$ is given by \eqref{eq:def_P_z}.
 \end{lem}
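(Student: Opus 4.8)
The plan is to fix $z \in \C_{0}^{\lambda}$, put $\delta := \dist(z,\overline{\Ran(\lambda)}) > 0$ so that $|z - \lambda_{k}| \geq \delta$ for all $k \in \Z$, and to show that each differentiation of $\mathcal{P}_{n}(z)$ costs only a polynomial factor in $n$, whereas $|\mathcal{P}_{n}(z)|$ itself decays geometrically in $n$; since geometric decay beats polynomial growth, the differentiated series still converge. I would treat only the first series explicitly, the second being handled identically after reflecting the index. As a starting point I would reuse the pointwise estimate already established inside the proof of Lemma~\ref{lem:sum_P_at_inf}: condition \eqref{eq:assum_sum_w} provides an index $M$ with $|w_{n}| \leq \tfrac{1}{2}|(\lambda_{n}-z)(\lambda_{n+1}-z)|^{1/2}$ for $n \geq M$, and the telescoping product then yields a constant $C(z) > 0$ with
\[
 |\mathcal{P}_{n}(z)| \leq C(z)\,2^{-n}, \quad \forall n \geq 0 .
\]

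Next I would control the logarithmic derivatives. Since $z \neq \lambda_{k}$ for every $k$, the function $\mathcal{P}_{n}$ is analytic and nonvanishing in a neighbourhood of $z$, and from \eqref{eq:def_P_z} its logarithm satisfies
\[
 \frac{\dd^{j}}{\dd z^{j}}\log\mathcal{P}_{n}(z) = (-1)^{j}(j-1)!\,S_{n,j}(z), \quad S_{n,j}(z) := \sum_{k=1}^{n}\frac{1}{(z-\lambda_{k})^{j}}, \quad j \geq 1 ,
\]
where the power sums obey the trivial bound $|S_{n,j}(z)| \leq n\,\delta^{-j}$ together with the differentiation rule $\tfrac{\dd}{\dd z}S_{n,j} = -j\,S_{n,j+1}$.

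Writing $\mathcal{P}_{n}^{(k)} = \mathcal{P}_{n}\,R_{n,k}$ and differentiating gives $R_{n,0} = 1$ and the recursion $R_{n,k+1} = R_{n,k}' - S_{n,1}\,R_{n,k}$. Using $\tfrac{\dd}{\dd z}S_{n,j} = -j\,S_{n,j+1}$, an induction on $k$ shows that $R_{n,k}$ is a fixed (universal) polynomial in $S_{n,1},\dots,S_{n,k}$ that is isobaric of weight $k$ once $S_{n,j}$ is assigned weight $j$; in particular each of its monomials contains at most $k$ factors $S_{n,j}$. This is precisely Fa\`a di Bruno's formula applied to $\mathcal{P}_{n} = \exp(\log\mathcal{P}_{n})$, with $R_{n,k}$ the complete Bell polynomial in the logarithmic derivatives. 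Combined with $|S_{n,j}(z)| \leq n\,\delta^{-j}$ this yields a constant $A_{k}(z) > 0$ with $|R_{n,k}(z)| \leq A_{k}(z)(1+n)^{k}$, whence
\[
 \left|\frac{\dd^{k}}{\dd z^{k}}\mathcal{P}_{n}(z)\right| \leq C(z)A_{k}(z)(1+n)^{k}\,2^{-n}, \quad \forall n \geq 0 ,
\]
and the first series converges because $\sum_{n\geq 0}(1+n)^{k}2^{-n} < \infty$.

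For the second series I would argue identically with $Q_{n}(z) := (w_{n-1}\mathcal{P}_{n-1}(z))^{-1}$ in place of $\mathcal{P}_{n}(z)$: the second estimate of Lemma~\ref{lem:sum_P_at_inf} supplies the geometric bound $|Q_{n}(z)| \leq C(z)2^{-|n|}$ at $-\infty$, while $\tfrac{\dd}{\dd z}\log Q_{n}(z) = -\sum_{k=n}^{0}(z-\lambda_{k})^{-1}$ has power sums bounded by $(1+|n|)\,\delta^{-j}$, so the same bookkeeping applies. I do not expect a genuine obstacle; the only point requiring care is the degree count in the inductive step, namely verifying that $R_{n,k}$ grows at most like $n^{k}$ and never faster, which is exactly what guarantees that the geometric factor $2^{-n}$ is not destroyed. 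As an alternative bypassing this bookkeeping, one may upgrade the first step to a uniform bound $\max_{|\zeta-z|=r}|\mathcal{P}_{n}(\zeta)| \leq C\,2^{-n}$ on a small circle whose closed disc lies in $\C_{0}^{\lambda}$ (using the local uniformity of \eqref{eq:assum_sum_w}), and then apply the Cauchy estimate $|\mathcal{P}_{n}^{(k)}(z)| \leq k!\,r^{-k}\max_{|\zeta-z|=r}|\mathcal{P}_{n}(\zeta)|$ to obtain the same geometric-times-constant bound directly.
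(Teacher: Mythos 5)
Your argument is correct and is essentially the paper's own proof: both establish the geometric bound $|\mathcal{P}_{n}(z)|\leq C(z)2^{-n}$ from the estimates in Lemma~\ref{lem:sum_P_at_inf}, then write $\mathcal{P}_{n}^{(k)}=\mathcal{P}_{n}\cdot(\text{polynomial in the logarithmic-derivative power sums})$ by induction, bound each power sum by $n\,\delta^{-j}$, and conclude that the resulting $n^{k}2^{-n}$ majorant is summable. Your closing remark about Cauchy estimates on a small disc is a legitimate shortcut not taken in the paper, but the main line of your argument coincides with the published one.
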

 
 \begin{proof}
  We prove the convergence of the first series only. The verification of the convergence of the second one is analogous.
 
  Let $z\in\C_{0}^{\lambda}$. First, the same arguments as in the proof of Lemma \ref{lem:sum_P_at_inf} show that
  \begin{equation}
   |\mathcal{P}_{n}(z)|\leq D(z) 2^{-n},
  \label{eq:P_n_leq_recip_2^n}
  \end{equation}
  for all $n$ sufficiently large, where $D(z)>0$ is a constant independent of $n$. 
  
  Further, note that
  \[
   \mathcal{P}_{n}'(z)=\xi_{n}(z)\mathcal{P}_{n}(z),
   \]
  where
  \[
   \xi_{n}(z)=\sum_{k=1}^{n}\frac{1}{\lambda_{k}-z}.
  \]
  One readily verifies by mathematical induction in $k\in\N$ that the $k$th derivative of $\mathcal{P}_{n}$ can be expressed as
  \begin{equation}
    \mathcal{P}_{n}^{(k)}=p_{k}\left(\xi_{n},\xi_{n}'\dots,\xi_{n}^{(k-1)}\right)\mathcal{P}_{n}, \; \mbox{ for } k\in\N,
  \label{eq:der_P_eq_pol_xi_P}
  \end{equation}
  where $p_{k}$ is a polynomial in $\xi_{n},\xi_{n}'\dots,\xi_{n}^{(k-1)}$ of the form
  \begin{equation}
   p_{k}\left(\xi_{n},\xi_{n}'\dots,\xi_{n}^{(k-1)}\right)=\sum_{\alpha\in\N_{0}^{k},\, |\alpha|\leq k}m_{\alpha}\prod_{j=1}^{k}\left(\xi_{n}^{(j-1)}\right)^{\alpha_{j}},
  \label{eq:p_k_expr}
  \end{equation}
  with coefficients $m_{\alpha}\in\Z$. Since
  \begin{equation}
   |\xi_{n}^{(k)}(z)|\leq \frac{k!}{\delta^{k+1}}n, \; \mbox{ for } k\in\N_{0},
  \label{eq:xi_n^k_estim}
  \end{equation}
  where $\delta:=\dist(z,\Ran(\lambda))>0$, one deduces from \eqref{eq:der_P_eq_pol_xi_P}, \eqref{eq:p_k_expr}, \eqref{eq:xi_n^k_estim}, and 
  \eqref{eq:P_n_leq_recip_2^n} that there exists $C_{k}(z)>0$ such that
  \[
   \left|\mathcal{P}_{n}^{(k)}(z)\right|\leq C_{k}(z)n^{k}2^{-n},
  \]
  for all $n$ sufficiently large. The above estimate gives a summable majorant for the first series from the statement
  for arbitrary $k\in\N_{0}$.
 \end{proof}

\subsection{The multiplicity theorem}
  
 \begin{thm}\label{thm:multiplicity}
 Let the assumptions of Theorem~\ref{thm:main} be fulfilled. Then the following claims hold true.
  \begin{enumerate}[{\upshape i)}]
   \item For all $z\in\spec_{p}(J)$, $\nu_{g}(z)=1$.
   \item Suppose additionally that the set $\C\setminus\der(\lambda)$ is connected. Then the set $\spec(J)\cap\C_{0}^{\lambda}$ consists of isolated eigenvalues and, if $z\in\C_{0}^{\lambda}\cap\spec(J)$, 
   then $\nu_{a}(z)$ coincides with the order of $z$ as a zero of~$F_{\mathcal{J}}$. Moreover, the space of generalized eigenvectors is spanned by vectors $f(z),f'(z),\dots,f^{(\nu_{a}(z)-1)}(z)$.
  \end{enumerate}
 \end{thm}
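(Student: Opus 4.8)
The plan is to obtain (i) directly from Lemma~\ref{lem:dim_M}, and to split (ii) into three parts: the isolation statement, the identity $\nu_a(z)=$ order of the zero, and the description of a basis. Under the hypotheses of Theorem~\ref{thm:main} one has $J_{\min}=J_{\max}=J$ with $\rho(J)\neq\emptyset$, so Lemma~\ref{lem:dim_M} applies and gives $\nu_g(z)=\dim\mathcal{M}_1\leq1$ for every $z\in\C$; for $z\in\spec_p(J)$ the kernel is nontrivial, whence $\nu_g(z)=1$, which settles (i). For the isolation claim in (ii) I would use Theorem~\ref{thm:main}(ii) to identify $\spec(J)\cap\C_0^\lambda$ with the zero set of $F_{\mathcal{J}}$ in $\C_0^\lambda$ (recall $r(z)=0$ there). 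Since $F_{\mathcal{J}}$ is meromorphic on the connected set $\C\setminus\der(\lambda)$ and, by the standing assumption, not identically zero, its zeros are isolated there; as every point of $\C_0^\lambda$ has a neighborhood contained in $\C_0^\lambda$, these zeros are isolated in $\C$, i.e. isolated eigenvalues.

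Fix now $z_0\in\C_0^\lambda\cap\spec(J)$ and let $K$ be the order of $z_0$ as a zero of $F_{\mathcal{J}}$. To compute $\nu_a(z_0)$ I would argue through the resolvent. A punctured neighbourhood of $z_0$ lies in $\rho(J)\cap\C_0^\lambda$, where the resolvent has the matrix entries $G_{i,j}(z)=-f_{\max(i,j)}(z)g_{\min(i,j)}(z)/F_{\mathcal{J}}(z)$ of Theorem~\ref{thm:main}(iii). Tracking the dependence on $z$ of the estimates in the proof of Theorem~\ref{thm:main} yields $\|(J-z)^{-1}\|=O(|z-z_0|^{-K})$, so $z_0$ is a pole of the resolvent of order at most $K$ and hence a normal eigenvalue with $P_{z_0}$ of finite rank. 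By (i) the eigenvalue $z_0$ carries a single Jordan chain, so $\nu_a(z_0)$ equals the order of this pole, which in turn equals the largest pole order among the $G_{i,j}$. Each $G_{i,j}$ has a pole of order at most $K$ because $f(z_0),g(z_0)$ are analytic at $z_0$; choosing $i$ large positive and $j$ large negative, the asymptotics from the proof of Proposition~\ref{prop:Z_in_spec_p_J_max} give $f_i(z_0)g_j(z_0)\neq0$, so that entry has a pole of order exactly $K$. Thus $\nu_a(z_0)=K$.

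Finally, to produce the claimed basis I would differentiate the eigenvalue equation. From $(\mathcal{J}-z)f(z)=0$ one gets $(\mathcal{J}-z)f^{(m)}(z)=mf^{(m-1)}(z)$, hence $(\mathcal{J}-z)^{m+1}f^{(m)}(z)=0$, and similarly for $g$; so $\{f^{(m)}(z_0)\}$ and $\{g^{(m)}(z_0)\}$ satisfy the annihilation relations of Lemma~\ref{lem:annihil} for $\mathcal{J}-z_0$. Differentiating the identity $F_{\mathcal{J}}(z)=W(f(z),g(z))=w_nC_n(f(z),g(z))$ gives $F_{\mathcal{J}}^{(\ell)}(z_0)=w_nC_n^{(\ell)}(f(z_0),g(z_0))$, so the vanishing of $F_{\mathcal{J}},\dots,F_{\mathcal{J}}^{(K-1)}$ at $z_0$ translates into $C^{(\ell)}(f(z_0),g(z_0))=0$ for $\ell<K$. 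Lemma~\ref{lem:annihil} then places $g^{(k)}(z_0)$ in $\spn\{f^{(0)}(z_0),\dots,f^{(k)}(z_0)\}$ for $k<K$, and a dimension count against the linear independence asserted in the same lemma gives $\spn\{f^{(m)}(z_0)\}_{m<K}=\spn\{g^{(m)}(z_0)\}_{m<K}$.

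With this span equality in hand, each $f^{(m)}(z_0)$ with $m<K$ is square summable at $+\infty$ by Lemma~\ref{lem:sum_der_P_at_inf} (the $\mathfrak{F}$-tails and their $z$-derivatives being bounded), and square summable at $-\infty$ precisely because it lies in the span of the $g^{(m)}(z_0)$, which are square summable at $-\infty$ by the companion estimate of the same lemma; hence $f^{(m)}(z_0)\in\ell^2(\Z)$, and the inclusion $(\mathcal{J}-z_0)f^{(m)}(z_0)=mf^{(m-1)}(z_0)\in\ell^2(\Z)$ makes the domain checks immediate, so $f^{(m)}(z_0)\in\Ker(J-z_0)^{m+1}\subset\mathcal{M}$. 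These $K$ vectors are linearly independent by Lemma~\ref{lem:annihil}, and since $\dim\mathcal{M}=\nu_a(z_0)=K$ they form a basis of the generalized eigenspace. The delicate point of the whole argument is this last square summability at $-\infty$: it is invisible from the explicit formula for $f$ and is forced only indirectly, by the order-$K$ vanishing of $F_{\mathcal{J}}$ through Lemma~\ref{lem:annihil} and the resulting equality of spans.
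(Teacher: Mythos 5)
Your proposal is correct, and parts (i), the isolation argument, and the construction of the generalized eigenvectors follow the paper's own route almost verbatim: Lemma~\ref{lem:dim_M} for the geometric multiplicity, the identity theorem on the connected set for isolation, the annihilation relations obtained by differentiating $\mathcal{J}f(z)=zf(z)$, Lemma~\ref{lem:annihil} for independence and for the span equality \eqref{eq:fg_span_coinc}, Lemma~\ref{lem:sum_der_P_at_inf} for summability at $+\infty$, and the indirect $\ell^{2}$-at-$-\infty$ argument via the $g^{(m)}$'s --- you correctly single out that last step as the crux. Where you genuinely diverge is in identifying $\nu_{a}(z_{0})$ with the order $K$ of the zero. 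The paper proves only the inequality $\nu_{a}(z_{0})\leq n_{0}$ from the resolvent (via the Jordan-chain formula \eqref{eq:poles_vec_Jord_chain} and the resulting order-$(n_{0}+1)$ singularity of $\langle\phi_{0},(J-z)^{-1}\phi_{n_{0}}\rangle$), and obtains the reverse inequality from the constructed vectors; you instead pin down the \emph{exact} pole order of the resolvent --- upper bound from $\|(J-z)^{-1}\|=O(|z-z_{0}|^{-K})$, lower bound from a single matrix entry $-f_{i}(z_{0})g_{j}(z_{0})/F_{\mathcal{J}}(z)$ with $i\gg0$, $j\ll0$ so that \eqref{eq:f_asympt} guarantees nonvanishing of the numerator --- and then use the constructed vectors only to exhibit a basis of a space whose dimension you already know. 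This is a clean variant, and the nonvanishing observation is a nice touch, but be aware of the two points you assert rather than prove: the norm bound requires checking that the constants $C_{1}(z)$, $C_{2}(z)$ in the proof of Theorem~\ref{thm:main} are locally uniform near $z_{0}$ (true, since the $\mathfrak{F}$-tails are uniformly bounded and $\dist(z,\Ran\lambda)$ is locally bounded below, but it is the same uniformity the paper itself glosses over); and the step from ``pole of order $K$'' to ``$P_{z_{0}}$ has finite rank and $\nu_{a}$ equals the pole order'' uses the standard identity $\Ran P_{z_{0}}=\Ker(J-z_{0})^{K}$ together with the bound $\dim\Ker(J-z_{0})^{K}\leq 2K$ coming from the order-$2K$ difference equation, plus Lemma~\ref{lem:dim_M} to reduce to a single Jordan chain. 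Both gaps are fillable with material already in the paper, so I regard the proof as sound.
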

 
 \begin{proof}
  According to Theorem~\ref{thm:main}, $J_{\max}=J_{\min}=:J$ and $\rho(J)\neq\emptyset$. Hence the claim (i) follows immediately
  from Lemma~\ref{lem:dim_M}. 
  
  Next, the fact that $\spec(J)\cap\C_{0}^{\lambda}$ contains eigenvalues only follows from the statement~(ii) of Theorem \ref{thm:main}.
  If $\C\setminus\der(\lambda)$ is connected, then $\C_{0}^{\lambda}$ is clearly connected as well.  Further, to arrive at a~contradiction, 
  assume that $\spec(J)\cap\C_{0}^{\lambda}$ has an accumulation point. Then by part~(ii) of Theorem \ref{thm:main},
  the set of zeros of the function $F_{\mathcal{J}}$, which is analytic on $\C_{0}^{\lambda}$, has an accumulation point in $\C_{0}^{\lambda}$.
  Thus $F_{\mathcal{J}}$ has to vanish identically on $\C_{0}^{\lambda}$, a contradiction with the assumption.
  
  Further, let $z_{0}\in\C_{0}^{\lambda}\cap\spec(J)$, then, by Theorem~\ref{thm:main} again, $F_{\mathcal{J}}(z_{0})=0$.
  Let us denote by $n_{0}$ the order of $z_{0}$ as the zero of $F_{\mathcal{J}}$. From the formula \eqref{eq:Green_func_F}, 
  one observes that any zero of $F_{\mathcal{J}}$ is a pole (or removable singularity) of the Green function of order less or equal to 
  the order of the zero. Thus, any matrix element of the resolvent operator $(J-z)^{-1}$ has a pole at $z_{0}$ of order at 
  most $n_{0}$. Consequently, for any $\phi,\psi\in\ell^{2}(\Z)$, the function $z\mapsto\langle\phi,(J-z)^{-1}\psi\rangle$ 
  has a pole at $z_{0}$ of order at most $n_{0}$. It follows from the last assertion that 
  \begin{equation}
  \nu_{a}(z_{0})\leq n_0.
  \label{eq:nu_a_leq_n}
  \end{equation}
  
  Indeed, if $\nu_{a}(z_{0})>n_{0}$, then there exists a Jordan chain of $J-z_{0}$ of the length at least $n_{0}+1$, 
  i.e., there are nonzero vectors $\phi_{0},\phi_{1},\dots,\phi_{n_{0}}\in\Dom J$ such that 
  \[
   (J-z_{0})\phi_{0}=0 \; \mbox{ and } \; (J-z_{0})\phi_{k}=\phi_{k-1}, \; \mbox{ for } k=1,\dots,n_{0}.
  \]
  From the above equations, one deduces that
  \begin{equation}
    (J-z)^{-1}\phi_{k}=\sum_{j=0}^{k}\frac{1}{(z_{0}-z)^{k+1-j}}\phi_{j}, \; \mbox{ for } k=0,\dots,n_{0},
  \label{eq:poles_vec_Jord_chain}
  \end{equation}
  where $z$ is supposed to be in a neighborhood of $z_{0}$ belonging to the resolvent set of $J$ that 
  exists since $z_{0}$ is an isolated eigenvalue of $J$. By putting $k=n_{0}$ in \eqref{eq:poles_vec_Jord_chain},
  one observes that $\langle\phi_{0},(J-z)^{-1}\phi_{n_{0}}\rangle$ has a singularity at $z_{0}$ of order $n_{0}+1$, a contradiction.

  In the remaining part of the proof, we prove that $\{f(z_{0}),f'(z_{0}),\dots,$ $f^{(n_{0}-1)}(z_{0})\}$
  is a linearly independent set of generalized eigenvectors of $J$ corresponding to the eigenvalue $z_{0}$ which, together with 
  \eqref{eq:nu_a_leq_n}, will conclude the proof of the assertion~(ii).
  
  First, we show that $f^{(j)}(z)$ is a square summable sequence at $+\infty$ for arbitrary $z\in\C_{0}^{\lambda}$ and
  $j\in\N_{0}$. To this end, it suffices to note
  that
  \begin{equation}
   \left|\frac{\dd^{i}}{\dd z^{i}}\mathfrak{F\!}\left(\left\{ \frac{\gamma_{k}^{\,2}}{\lambda_{k}-z}\right\}_{k=n}^{\infty}\right)\right|\leq K(z),
  \label{eq:F_der_com_bound}
  \end{equation}
  for all $n\in\N$ and $0\leq i\leq j$, with some $K(z)>0$. This holds true since
  \[
   \lim_{n\to\infty}\mathfrak{F\!}\left(\left\{ \frac{\gamma_{k}^{\,2}}{\lambda_{k}-z}\right\}_{k=n}^{\infty}\right)=1
  \]
  and the convergence is local uniform in $z$ on $\C_{0}^{\lambda}$, as one deduces from the inequality
  \[
   \left|\mathfrak{F\!}\left(\left\{ \frac{\gamma_{k}^{\,2}}{\lambda_{k}-z}\right\}_{k=n}^{\infty}\right)-1\right|
   \leq\exp\!\left(\sum_{k=n}^{\infty}\left|\frac{w_{k}^{\,2}}{(\lambda_{k}-z)(\lambda_{k+1}-z)}\right|\right)-1, \quad \forall z\in\C_{0}^{\lambda},
  \]
  which, in its turn, is obtained similarly as the one from Remark~\ref{rem:def_F}.
  Recalling \eqref{eq:def_f_z} and using~\eqref{eq:F_der_com_bound}, one gets
  \[
   |f_{n}^{(j)}(z)|\leq K(z)\sum_{i=0}^{j}\binom{j}{i}\left|\mathcal{P}_{n}^{(i)}(z)\right|\!, \quad \forall n\in\N.
  \]
  Now, by applying Lemma~\ref{lem:sum_der_P_at_inf}, one concludes that $f^{(j)}(z)$ is a summable and hence also square summable sequence at $+\infty$.
  Analogously, with the aid of Lemma~\ref{lem:sum_der_P_at_inf}, one verifies that $g^{(j)}(z)$ is a square summable sequence at $-\infty$ for all $z\in\C_{0}^{\lambda}$ 
  and $j\in\N_{0}$.
  
  Further, recall Proposition~\ref{prop:sol_f_g}. By differentiating the equation $\mathcal{J}f(z)=zf(z)$ with respect to $z$, one obtains equalities
  \begin{equation}
   (\mathcal{J}-z_{0})f(z_{0})=0 \; \mbox{ and } \; (\mathcal{J}-z_{0})f^{(j)}(z_{0})=jf^{(j-1)}(z_{0}), \quad \forall j\in\N.
   \label{eq:gener_eigenvec_eq}
  \end{equation}
  In addition, $f(z_{0})\neq0$ since it is an eigenvector of $J$. Clearly, the same holds true if $f$ is replaced by $g$.
  Thus, the couple of sequences $\{f^{(j)}(z_{0})\}_{j\in\N_{0}}$ and $\{g^{(j)}(z_{0})\}_{j\in\N_{0}}$ fulfills the assumptions of Lemma~\ref{lem:annihil}
  where $\mathcal{J}$ is replaced by $\mathcal{J}-z_{0}$. According to this Lemma, the set $\{f(z_{0}),f'(z_{0}),\dots,f^{(n_{0}-1)}(z_{0})\}$ is linearly independent. Further, since 
  \[
  F_{\mathcal{J}}(z)=W(f(z),g(z))=w_{n}C_{n}(f(z),g(z)),\quad \forall z\in\C_{0}^{\lambda},
  \]
  for $n\in\Z$ arbitrary, one obtains by differentiation that
  \begin{equation}
  F_{\mathcal{J}}^{(j)}(z)=w_{n}C_{n}^{(j)}(f(z),g(z)), \quad \forall j\in\N_{0},\;\forall n\in\Z, \; \mbox { and }\; \forall z\in\C_{0}^{\lambda},
  \label{eq:F_rel_Casoratian}
  \end{equation}
  where $C^{(j)}(f(z),g(z))$ is as in~\eqref{eq:def_C^k}. Since $z_{0}$ is a zero of~$F_{\mathcal{J}}$ of order $n_{0}$, $F_{\mathcal{J}}^{(j)}(z_{0})=0$ for $0\leq j <n_{0}$, 
  and hence, by~\eqref{eq:F_rel_Casoratian}, $C^{(j)}(f(z_{0}),g(z_{0}))=0$ for $0\leq j <n_{0}$. Thus, Lemma~\ref{lem:annihil} yields
  \begin{equation}
  \spn\left\{f(z_{0}),f'(z_{0}),\dots,f^{(n_{0}-1)}(z_{0})\right\}=\spn\left\{g(z_{0}),g'(z_{0}),\dots,g^{(n_{0}-1)}(z_{0})\right\}\!.
  \label{eq:fg_span_coinc}
  \end{equation}

  Since all the sequences from the span on the LHS of \eqref{eq:fg_span_coinc} are square summable at $+\infty$
  and all the sequences from the span on the RHS of \eqref{eq:fg_span_coinc} are square summable at $-\infty$
  one concludes that
  \[
   f^{(j)}(z_{0})\in\ell^{2}(\Z), \quad \forall j\in\{0,1,\dots,n_{0}-1\}.
  \]
  Finally, by using the equalities from~\eqref{eq:gener_eigenvec_eq}, one gets
  \[
   f^{(j)}(z_{0})\in\Dom J \; \mbox{ and } \; (J-z_{0})^{n_{0}}f^{(j)}(z_{0})=0,
  \]
  for all $j\in\{0,1,\dots,n_{0}-1\}$. Hence $\{f(z_{0}),f'(z_{0}),\dots,f^{(n_{0}-1)}(z_{0})\}$ is a linearly 
  independent set of generalized eigenvectors. 
 \end{proof}
 
 \begin{rem}
  Note that, under the assumptions of Theorem~\ref{thm:multiplicity}, any spectral point of $J$ located in $\C_{0}^{\lambda}$ is an isolated eigenvalue
  whose algebraic multiplicity is finite. Moreover, the proof of Theorem~\ref{thm:multiplicity} together with Lemma~\ref{lem:dim_M} actually show that, 
  for $z_{0}\in\C_{0}^{\lambda}$ a zero of $F_{\mathcal{J}}$ of order $n_{0}$, it holds
  \[
   \mathcal{M}_{j}=\spn\{\hat{f}^{(j-1)}(z_{0})\}, \; \mbox{ for } 1\leq j \leq n_{0},
  \]
  and 
  \[
   \mathcal{M}_{j}=\{0\}, \; \mbox{ for } j>n_{0},
  \]
  where $\{\hat{f}(z_{0}),\hat{f}'(z_{0}),\dots,\hat{f}^{(n_{0}-1)}(z_{0})\}$ is the set of vectors obtained by the application of the Gram--Schmidt orthogonalization 
  procedure to the set $\{f(z_{0}),f'(z_{0}),\dots,f^{(n_{0}-1)}(z_{0})\}$.
 \end{rem}

 The following corollary gives a necessary condition for $J$ to be diagonalizable, i.e., similar to a diagonal operator. The notion of diagonalizability of a non-self-adjoint operator 
 deserves a more detailed explanation. Usually, the operator of the similarity transformation is required to be bounded with a bounded inverse. This yields nontrivial questions
 concerning basiness of the set of eigenvectors. However, these questions are out of the scope of the current paper, and we do not address them here. Let us only mention that 
 the coincidence of algebraic and geometric multiplicity of all eigenvalues is a necessary condition for an operator to be diagonalizable in any reasonable sense. 
 
 \begin{cor}
  Let the assumptions of Theorem~\ref{thm:main} be fulfilled and let $\C\setminus\der(\lambda)$ be connected. If there exists an eigenvalue $z\in\C_{0}^{\lambda}$
  of $J$ such that the corresponding eigenvector $v(z)$ satisfies
  \[
   \sum_{n=-\infty}^{\infty}v_{n}^{2}(z)=0,
  \]
 then $\nu_{a}(z)>\nu_{g}(z)$.
 \end{cor}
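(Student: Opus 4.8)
The plan is to reduce the entire statement to an assertion about the order of the zero of $F_{\mathcal{J}}$ at $z$, and then read off the conclusion from the multiplicity theorem. First I would collect the structural facts that the hypotheses supply for free. Since the assumptions of Theorem~\ref{thm:main} are in force, $J:=J_{\min}=J_{\max}$ is well defined with $\rho(J)\neq\emptyset$, and Theorem~\ref{thm:multiplicity}(i) gives $\nu_{g}(z)=1$. Because $z\in\C_{0}^{\lambda}$ lies in the spectrum and $r(z)=0$ there (as $z\notin\overline{\Ran(\lambda)}$), Theorem~\ref{thm:main}(ii) yields $F_{\mathcal{J}}(z)=0$, and by Proposition~\ref{prop:Z_in_spec_p_J_max} the one-dimensional eigenspace is spanned by $f(z)$. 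Hence the given eigenvector satisfies $v(z)=c\,f(z)$ for some $c\neq0$.

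Next I would convert the vanishing-sum hypothesis into a statement about $F_{\mathcal{J}}'$. From $v(z)=c\,f(z)$ the assumption $\sum_{n}v_{n}^{2}(z)=0$ is equivalent to $\sum_{n}f_{n}^{2}(z)=0$. Since $z\in\C_{0}^{\lambda}$ is a zero of $F_{\mathcal{J}}$, Proposition~\ref{prop:l2_norm} applies and gives $\sum_{n}f_{n}^{2}(z)=A(z)F_{\mathcal{J}}'(z)$ with $A(z)\neq0$; the nonvanishing of $A(z)$ is precisely what was established within the proof of that proposition, via the identity $f(z)=A(z)g(z)$ and $f(z)\neq0$. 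Therefore $F_{\mathcal{J}}'(z)=0$.

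Combining $F_{\mathcal{J}}(z)=0$ with $F_{\mathcal{J}}'(z)=0$, the point $z$ is a zero of $F_{\mathcal{J}}$ of order at least $2$. Since the standing hypotheses include that $\C\setminus\der(\lambda)$ is connected, Theorem~\ref{thm:multiplicity}(ii) identifies $\nu_{a}(z)$ with the order of this zero, so $\nu_{a}(z)\geq2$; together with $\nu_{g}(z)=1$ this yields $\nu_{a}(z)>\nu_{g}(z)$. I do not anticipate any genuine obstacle here, as the statement is a direct assembly of the earlier results; the only points demanding care are verifying that $z$ is a \emph{bona fide} zero of $F_{\mathcal{J}}$ inside $\C_{0}^{\lambda}$ (so that both Proposition~\ref{prop:l2_norm} and the order-equals-multiplicity clause of Theorem~\ref{thm:multiplicity}(ii) are applicable) and that $A(z)\neq0$, which is what lets the vanishing of the quadratic sum transfer cleanly to $F_{\mathcal{J}}'$.
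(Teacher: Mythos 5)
Your proposal is correct and follows essentially the same route as the paper: Theorem~\ref{thm:multiplicity} gives $\nu_{g}(z)=1$ so $v(z)=cf(z)$, Theorem~\ref{thm:main} and Proposition~\ref{prop:l2_norm} give $F_{\mathcal{J}}(z)=F_{\mathcal{J}}'(z)=0$ (using $A(z)\neq0$), and Theorem~\ref{thm:multiplicity}(ii) then yields $\nu_{a}(z)\geq2$. Your version simply spells out the intermediate justifications the paper leaves implicit.
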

 
 \begin{proof}
  By Theorem \ref{thm:multiplicity}, $\nu_{g}(z)=1$. Thus, $v(z)=cf(z)$ for some $c\neq0$. By applying 
  Theorem \ref{thm:main} and Proposition \ref{prop:l2_norm}, we obtain $F_{\mathcal{J}}(z)=F_{\mathcal{J}}'(z)=0$. 
  Consequently, Theorem~\ref{thm:multiplicity} implies that $\nu_{a}(z)\geq2$.
 \end{proof}

 \begin{cor}
  Let $\lambda:\Z\to\R$ and $w:\Z\to\R\setminus\{0\}$ be such that \eqref{eq:assum_sum_w} holds for at least one $z_{0}\in\C_{0}^{\lambda}$ and $\der(\lambda)\neq\mathbb{R}$. 
  Then all the zeros of $F_{\mathcal{J}}$ are real and simple.
 \end{cor}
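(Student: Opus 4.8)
The plan is to combine the self-adjointness of $J$ with the multiplicity result of Theorem~\ref{thm:multiplicity}, so the first task is to confirm that the hypotheses of Theorems~\ref{thm:main} and~\ref{thm:multiplicity} are in force. Since $\Ran\lambda\subset\R$ is contained in a half-plane, hence in a sector, the discussion in Remark~\ref{rem:two_assum_comment} shows that $F_{\mathcal{J}}$ does not vanish identically on $\C_{0}^{\lambda}$; together with the standing convergence condition \eqref{eq:assum_sum_w} this is precisely assumption~(ii) of Theorem~\ref{thm:main}, so $J_{\min}=J_{\max}=:J$ and all conclusions of Theorem~\ref{thm:main} hold. Moreover $\der(\lambda)$ is a closed subset of $\R$, and the hypothesis $\der(\lambda)\neq\R$ guarantees $\R\setminus\der(\lambda)\neq\emptyset$. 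A proper closed subset of the real line cannot disconnect the plane: each $x_{0}\in\R\setminus\der(\lambda)$ has an open disk disjoint from $\der(\lambda)$ meeting both open half-planes, so $\C\setminus\der(\lambda)$ is (path-)connected and Theorem~\ref{thm:multiplicity} applies in full. This topological observation is where $\der(\lambda)\neq\R$ is genuinely used, and I expect it to be the only delicate point.

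The reality of the zeros follows from self-adjointness. Because $\lambda$ and $w$ are real, $\mathcal{J}$ has real entries, so the complex conjugation $\mathcal{C}$ commutes with the action of $\mathcal{J}$; passing to the minimal/maximal operators this gives $\mathcal{C}J\mathcal{C}=J$. Inserting this into the relation $J=J_{\max}=\mathcal{C}J_{\min}^{*}\mathcal{C}=\mathcal{C}J^{*}\mathcal{C}$ supplied by \eqref{eq:J_max_rel_J_min} and conjugating by $\mathcal{C}$ yields $J^{*}=\mathcal{C}J\mathcal{C}=J$, so $J$ is self-adjoint and $\spec(J)\subset\R$. By part~(ii) of Theorem~\ref{thm:main}, the zeros of $F_{\mathcal{J}}$ lying in its domain of analyticity $\C_{0}^{\lambda}$ are exactly the points of $\spec(J)\cap\C_{0}^{\lambda}$ (here $r(z)=0$, so $\mathfrak{Z}(\mathcal{J})\cap\C_{0}^{\lambda}$ is the ordinary zero set); consequently every such zero is real.

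For simplicity I would argue that self-adjointness forces the algebraic and geometric multiplicities to agree. If $z\in\C_{0}^{\lambda}\cap\spec(J)$ and $(J-z)^{2}v=0$, then, using $(J-z)^{*}=J-z$ for real $z$, we have $\|(J-z)v\|^{2}=\langle v,(J-z)^{2}v\rangle=0$, whence $(J-z)v=0$; thus $\Ker(J-z)^{n}=\Ker(J-z)$ for all $n$, so $\mathcal{M}_{j}=\{0\}$ for $j\geq2$ and $\nu_{a}(z)=\nu_{g}(z)$. By part~(i) of Theorem~\ref{thm:multiplicity} we have $\nu_{g}(z)=1$, hence $\nu_{a}(z)=1$, and by part~(ii) of the same theorem $\nu_{a}(z)$ equals the order of $z$ as a zero of $F_{\mathcal{J}}$. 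Therefore each zero of $F_{\mathcal{J}}$ in $\C_{0}^{\lambda}$ is simple, which together with the reality established above completes the proof. All steps beyond the connectivity of $\C\setminus\der(\lambda)$ are direct consequences of self-adjointness and of the theorems already proved.
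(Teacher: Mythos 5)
Your proposal is correct and follows essentially the same route as the paper: invoke Remark~\ref{rem:two_assum_comment} to get assumption~(ii) of Theorem~\ref{thm:main}, note that $\der(\lambda)\neq\R$ makes $\C\setminus\der(\lambda)$ connected so that Theorem~\ref{thm:multiplicity} applies, and then use self-adjointness of $J$ to get reality of the spectrum and $\nu_{a}(z)=\nu_{g}(z)=1$, hence simplicity of the zeros. You merely spell out in more detail two steps the paper states without proof (the self-adjointness of $J$ via \eqref{eq:J_max_rel_J_min} and the equality $\nu_{a}=\nu_{g}$ for a self-adjoint operator), and both of your arguments are sound.
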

 
 \begin{proof}
  Since $\Ran\lambda\subset\R$, $F_{\mathcal{J}}$ does not vanish identically on $\C_{0}^{\lambda}$, see Remark~\ref{rem:two_assum_comment}. 
  Further, according to Theorem~\ref{thm:main}, $J_{\max}=J_{\min}=:J$. Since $w$ is also assumed to be real, $J$ is self-adjoint.
  
  Next, note that, if $\der(\lambda)\neq\mathbb{R}$, then $\C\setminus\der(\lambda)$ is connected. Let $z_{0}\in\C_{0}^{\lambda}$ be a zero of~$F_{\mathcal{J}}$. 
  By Theorems~\ref{thm:main} and \ref{thm:multiplicity}, $z_{0}$ is an isolated eigenvalue of the self-adjoint operator $J$ and therefore $z_{0}\in\R$. Moreover, by 
  the self-adjointness of $J$ and the claim~(i) of Theorem~\ref{thm:multiplicity}, one has $\nu_{a}(z_{0})=\nu_{g}(z_{0})=1$.  Hence, according to the claim~(ii) of Theorem~\ref{thm:multiplicity}, 
  $z_{0}$ is a simple zero of $F_{\mathcal{J}}$.
 \end{proof}

\section{Diagonals admitting global regularization and connections with regularized determinants}\label{sec:diag_reg}

Most of the results obtained within Sections \ref{sec:char_func} and \ref{sec:multiplicity} have been derived with
the spectral parameter restricted to the set $\C_{0}^{\lambda}$. For example, the zeros of the characteristic
function $F_{\mathcal{J}}$ coincide with $\spec_{p}(J)\cap\C_{0}^{\lambda}$ provided that the assumptions of
Theorem~\ref{thm:main} hold. One can even go further and relate the points from $\Ran(\lambda)\setminus\der(\lambda)$
to $\spec_{p}(J)$, as has been done in the claim~(ii) of Theorem~\ref{thm:main}. Doing so, one is forced to locally regularize 
the characteristic function by the limit formula
\[
 \lim_{u\to z}(u-z)^{r(z)}F_{\mathcal{J}}(u),
\]
which is well defined for all $z\in\C\setminus\der(\lambda)$ since $F_{\mathcal{J}}$ has a pole of finite order at $z\in\Ran(\lambda)\setminus\der(\lambda)$ less than or 
equal to $r(z)<\infty$ or removable singularity. However, the resulting extended function is no more analytic on the open set where it is defined, i.e., $\C\setminus\der(\lambda)$.

On the other hand, under some additional assumptions concerning the diagonal sequence $\lambda$, one can regularize $F_{\mathcal{J}}$
globally by multiplying $F_{\mathcal{J}}$ by a suitable function having zeros at the points from $\Ran(\lambda)\setminus\der(\lambda)$
with respective multiplicities. It turns out that the resulting function is either entire or analytic everywhere except the origin. We distinguish 3
different situations where the diagonal sequence admits the global regularization of $F_{\mathcal{J}}$ and, in each case,
we formulate a proposition that combines particular results of Theorems \ref{thm:main} and \ref{thm:multiplicity} with the set of treated
spectral points extended to either $\C$, or $\C\setminus\{0\}$. Moreover, we provide an illustrative concrete example to each case and, in two cases, 
we indicate how the regularized characteristic function is related with the theory of regularized determinants \cite{Simon}.

\subsection{The compact case}

In addition to the assumption \eqref{eq:assum_sum_w}, let us assume
\begin{equation}
 \sum_{n=-\infty}^{\infty}|\lambda_{n}|^{p}<\infty,
 \label{eq:assum_compact}
\end{equation}
for some $p\in\N$.
The condition \eqref{eq:assum_compact} implies $\lambda_{n}\to0$, as $n\to\pm\infty$, and hence
$\der(\lambda)=\{0\}$. Note also that, assuming \eqref{eq:assum_compact}, the condition \eqref{eq:assum_sum_w} holds true
for any $z_{0}\neq0$ if and only if $w\in\ell^{2}(\Z)$.

Under the condition \eqref{eq:assum_compact}, we can introduce the functions defined by the Hadamard products
\begin{equation}
 \Phi_{p}^{+}(z):=\prod_{n=1}^{\infty}\left(1-\frac{\lambda_{n}}{z}\right)\exp\left(\sum_{j=1}^{p-1}\frac{1}{j}\left(\frac{\lambda_{n}}{z}\right)^{\! j}\right)\!,
 \label{eq:def_Phi_+}
\end{equation}
\begin{equation}
 \Phi_{p}^{-}(z):=\prod_{n=-\infty}^{0}\left(1-\frac{\lambda_{n}}{z}\right)\exp\left(\sum_{j=1}^{p-1}\frac{1}{j}\left(\frac{\lambda_{n}}{z}\right)^{\! j}\right)\!,
 \label{eq:def_Phi_-}
\end{equation}
and
\begin{equation}
 \Phi_{p}(z):=\Phi_{p}^{-}(z)\Phi_{p}^{+}(z),
 \label{eq:def_Phi}
\end{equation}
for all $z\in\C\setminus\{0\}$. Functions \eqref{eq:def_Phi_+}, \eqref{eq:def_Phi_-}, and \eqref{eq:def_Phi} are well defined and analytic on $\C\setminus\{0\}$, see,
for example, \cite[Chp.~11]{Conway}. In addition, their zeros are values $\lambda_{n}$, for $n\geq1$, $n\leq0$, and $n\in\Z$, respectively. Thus, we can regularize
the functions $f$, $g$, and $F_{\mathcal{J}}$, defined in Definitions~\ref{def:char_fction} and~\ref{def:sol_f_g}, by introducing functions
\begin{equation}
 \tilde{f}(z):=\Phi_{p}^{+}(z)f(z), \quad \tilde{g}(z):=\Phi_{p}^{-}(z)g(z), \quad\mbox{ and }\quad \tilde{F}_{\mathcal{J}}(z):=\Phi_{p}(z)F_{\mathcal{J}}(z).
 \label{eq:f_g_char_func_regularized_compact}
\end{equation}
All the functions $\tilde{f}$, $\tilde{g}$, and $\tilde{F}_{\mathcal{J}}$ are analytic on $\C\setminus\{0\}$.

Now, we can formulate a proposition summarizing particular results from Theorem~\ref{thm:main} and~\ref{thm:multiplicity} adjusted to the Jacobi operator whose
diagonal sequence satisfies~\eqref{eq:assum_compact}. While the first part of the statement is an immediate consequence of Theorem~\ref{thm:main}, the second 
part concerning algebraic multiplicities does not follow from Theorem \ref{thm:multiplicity} readily. However, to verify this statement, one follows the same 
steps as in the proof of Theorem~\ref{thm:multiplicity} with the functions $f$ and $g$ being replaced by their regularized extensions $\tilde{f}$ and 
$\tilde{g}$. For this reason and the sake of brevity, the proof is only indicated.

\begin{prop}\label{prop:main_compact}
 Let $\lambda:\Z\to\C$ and $w:\Z\to\C\setminus\{0\}$ be such that 
 $w\in\ell^{2}(\Z)$ and $\lambda\in\ell^{p}(\Z)$, for some $p\in\N$.
 Then
 \begin{equation}
  \spec(J)=\spec_{p}(J)\cup\{0\}=\{z\in\C\setminus\{0\} \mid \tilde{F}_{\mathcal{J}}(z)=0\}\cup\{0\}.
 \label{eq:spec_zero_char_func_compact}
 \end{equation}
 In addition, the algebraic multiplicity $\nu_{a}(z)$ of a nonzero eigenvalue $z$ of $J$ coincides with the order of $z$ as a zero of $\tilde{F}_{\mathcal{J}}$
 and the space of generalized eigenvectors is spanned by vectors 
  $\tilde{f}(z),\tilde{f}'(z),\dots,\tilde{f}^{(\nu_{a}(z)-1)}(z)$.
\end{prop}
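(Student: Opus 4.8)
The plan is to transfer Theorems~\ref{thm:main} and~\ref{thm:multiplicity} from the restricted set $\C_{0}^{\lambda}$ to the whole of $\C\setminus\{0\}$ by exploiting that the Hadamard factors $\Phi_{p}^{\pm}$ are \emph{scalar} (i.e.\ $n$-independent) functions, analytic on $\C\setminus\{0\}$, whose zeros cancel exactly the singularities of $f$, $g$, and $F_{\mathcal{J}}$. First I would record the standing reductions. Since $\lambda\in\ell^{p}(\Z)$ forces $\lambda_{n}\to0$, one has $\der(\lambda)=\{0\}$, so $\C\setminus\der(\lambda)=\C\setminus\{0\}$ is connected; moreover $w\in\ell^{2}(\Z)$ makes \eqref{eq:assum_sum_w} hold for every $z_{0}\neq0$. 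An estimate as in Remark~\ref{rem:def_F} gives $|F_{\mathcal{J}}(z)-1|\leq\exp(\|w\|_{2}^{2}/\dist(z,\Ran\lambda)^{2})-1\to0$ as $z\to\infty$, so $F_{\mathcal{J}}\not\equiv0$ and both hypotheses of Theorem~\ref{thm:main} are met; in particular $J_{\min}=J_{\max}=:J$. Finally, since $\lambda$ and $w$ tend to $0$ at $\pm\infty$, $J$ is the operator-norm limit of its finite-band truncations and hence compact, so that $0\in\spec(J)$.

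For claim~(i) I would relate the zero set of $\tilde{F}_{\mathcal{J}}=\Phi_{p}F_{\mathcal{J}}$ to $\mathfrak{Z}(\mathcal{J})$. At each $z\in\Ran(\lambda)\setminus\{0\}$ the factor $\Phi_{p}$ has a zero of order exactly $r(z)$, while $F_{\mathcal{J}}$ has at most a pole of order $r(z)$ there; thus $\tilde{F}_{\mathcal{J}}$ is analytic on all of $\C\setminus\{0\}$, and writing $\Phi_{p}(u)=(u-z)^{r(z)}\psi(u)$ with $\psi(z)\neq0$ one gets $\tilde{F}_{\mathcal{J}}(z)=\psi(z)\lim_{u\to z}(u-z)^{r(z)}F_{\mathcal{J}}(u)$, whereas on $\C_{0}^{\lambda}$ simply $r(z)=0$ and $\Phi_{p}(z)\neq0$. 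Hence $\{z\in\C\setminus\{0\}\mid\tilde{F}_{\mathcal{J}}(z)=0\}=\mathfrak{Z}(\mathcal{J})$, which by Theorem~\ref{thm:main} equals $\spec(J)\setminus\{0\}=\spec_{p}(J)\setminus\{0\}$. Adjoining the point $0\in\spec(J)$ then yields the three-fold equality~\eqref{eq:spec_zero_char_func_compact}.

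For claim~(ii) I would rerun the proof of Theorem~\ref{thm:multiplicity} with $f,g,F_{\mathcal{J}}$ replaced by $\tilde{f},\tilde{g},\tilde{F}_{\mathcal{J}}$. Three facts legitimize this on $\C\setminus\{0\}$: $\tilde{f}(z),\tilde{g}(z)$ solve $\mathcal{J}u=zu$ (true on $\C_{0}^{\lambda}$, being scalar multiples of $f(z),g(z)$, and extended by analytic continuation since each entry is analytic on $\C\setminus\{0\}$); the Wronskian identity $\tilde{F}_{\mathcal{J}}(z)=W(\tilde{f}(z),\tilde{g}(z))$, which holds because the $n$-independent scalars factor out of the Casoratian, giving $W(\tilde{f},\tilde{g})=\Phi_{p}^{+}\Phi_{p}^{-}W(f,g)=\Phi_{p}F_{\mathcal{J}}$; and $\tilde{f}(z_{0})=c_{+}f(z_{0})$, where $c_{+}\neq0$ is the leading coefficient of $\Phi_{p}^{+}$ at its zero of order $r_{+}(z_{0})$, so that $\tilde{f}(z_{0})\neq0$ for a nonzero eigenvalue $z_{0}$ by the nonvanishing of $f(z_{0})$ from Proposition~\ref{prop:Z_in_spec_p_J_max}. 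Granting these, differentiating $(\mathcal{J}-z)\tilde{f}(z)=0$ at $z_{0}$ produces the annihilation relations~\eqref{eq:annihil_eq} for $\{\tilde{f}^{(j)}(z_{0})\}$ and $\{\tilde{g}^{(j)}(z_{0})\}$, while the vanishing $\tilde{F}_{\mathcal{J}}^{(j)}(z_{0})=0$ for $j<n_{0}$ (the order of the zero) feeds Lemma~\ref{lem:annihil}, yielding the linear independence of $\{\tilde{f}^{(j)}(z_{0})\}_{j<n_{0}}$ and its coincidence with $\spn\{\tilde{g}^{(j)}(z_{0})\}_{j<n_{0}}$; square summability at $\pm\infty$ then places these $n_{0}$ generalized eigenvectors in $\ell^{2}(\Z)$, giving $\nu_{a}(z_{0})\geq n_{0}$. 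The matching bound $\nu_{a}(z_{0})\leq n_{0}$ comes from rewriting the Green function~\eqref{eq:Green_func_F} as $\Phi_{p}(z)N_{i,j}(z)/\tilde{F}_{\mathcal{J}}(z)$ with numerator $\Phi_{p}N_{i,j}$ analytic at $z_{0}$, so that every resolvent matrix element has a pole of order at most $n_{0}$, ruling out a Jordan chain of length exceeding $n_{0}$ exactly as in Theorem~\ref{thm:multiplicity}.

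The step that genuinely requires the regularization, rather than a formal substitution, is verifying that \emph{all} derivatives $\tilde{f}^{(j)}(z)$ are square summable at $+\infty$ (and $\tilde{g}^{(j)}(z)$ at $-\infty$) for every $z\in\C\setminus\{0\}$, including the points $z=\lambda_{m}\in\Ran(\lambda)\setminus\{0\}$ where the unregularized $f,g$ blow up. On $\C_{0}^{\lambda}$ this is contained in the proof of Theorem~\ref{thm:multiplicity} and passes to $\tilde{f}$ through the Leibniz rule, the derivatives of the scalar $\Phi_{p}^{+}$ being harmless; the actual work is an analogue of Lemma~\ref{lem:sum_der_P_at_inf} for the regularized products $\Phi_{p}^{+}(z)\mathcal{P}_{n}(z)$, which, after the telescoping cancellation $(1-\lambda_{k}/z)\,w_{k-1}/(z-\lambda_{k})=w_{k-1}/z$, are analytic at each $\lambda_{m}$ and retain geometric-type decay in $n$, so that each fixed derivative order admits a summable majorant uniformly near $z_{0}$. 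I expect this uniform-on-$\C\setminus\{0\}$ summability to be the main obstacle; once it is in hand, the remainder of claim~(ii) is the verbatim transcription indicated above.
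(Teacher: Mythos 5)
Your proposal is correct and follows essentially the same route as the paper: verify the hypotheses of Theorem~\ref{thm:main}, identify the zero set of $\tilde{F}_{\mathcal{J}}$ on $\C\setminus\{0\}$ with $\mathfrak{Z}(\mathcal{J})$, adjoin $0$ via compactness of $J$, and rerun the proof of Theorem~\ref{thm:multiplicity} with $f$, $g$, $F_{\mathcal{J}}$ replaced by their regularized versions. The paper's own proof is far terser---it merely asserts that the multiplicity part is to be derived ``in the same way'' after the tilde substitution---so the details you supply (the Wronskian identity $W(\tilde{f},\tilde{g})=\tilde{F}_{\mathcal{J}}$, the nonvanishing $\tilde{f}(z_{0})=c_{+}f(z_{0})$, and especially the regularized analogue of Lemma~\ref{lem:sum_der_P_at_inf} needed at points of $\Ran(\lambda)\setminus\{0\}$) are precisely the ones the paper leaves to the reader.
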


\begin{proof}
 The assumptions imply that $\der(\lambda)=\{0\}$ and the condition \eqref{eq:assum_sum_w} is satisfied for any $z_{0}\neq0$.
 Moreover, by Remark \ref{rem:two_assum_comment}, $F_{\mathcal{J}}\neq0$ on $\C_{0}^{\lambda}$ since $\Ran(\lambda)$ is bounded. Then Theorem~\ref{thm:main}
 tells us that $J_{\min}=J_{\max}=:J$ and
 \[
  \spec(J)\setminus\{0\}=\spec_{p}(J)\setminus\{0\}=\{z\in\C\setminus\{0\} \mid \tilde{F}_{\mathcal{J}}(z)=0\}.
 \]
 To verify~\eqref{eq:spec_zero_char_func_compact}, it suffices to realize that $J$ is compact and hence $0\in\spec(J)$. 
 Indeed, one easily shows that the Jacobi operator $J$ is compact if and only if
 \[
  \lim_{n\to\pm\infty}\lambda_{n}=\lim_{n\to\pm\infty}w_{n}=0.
 \]
 The above equalities are guaranteed by the assumptions $w\in\ell^{2}(\Z)$ and $\lambda\in\ell^{p}(\Z)$.
 
 The remaining part of the statement is to be derived by the same way as Theorem~\ref{thm:main} where the
 solutions $f$ and $g$ are replaced by their regularized extensions $\tilde{f}$ and $\tilde{g}$.
\end{proof}

 Let us remark that although $0\in\spec(J)$, $0$ need not be an eigenvalue of $J$. Note also that the matrix elements 
 of the resolvent, see Theorem \ref{thm:main} part~(iii), can be now written as 
  \begin{equation}
  \mathcal{G}_{i,j}(z)=-\frac{1}{\tilde{F}_{\mathcal{J}}(z)}\begin{cases}
                                                      \tilde{f}_{i}(z)\tilde{g}_{j}(z),& \mbox{ for } i\geq j,\\
                                                      \tilde{f}_{j}(z)\tilde{g}_{i}(z),& \mbox{ for } i\leq j,
                                                     \end{cases}
  \label{eq:Green_func_compact}
  \end{equation}
  for all $z\in\rho(J)$. Further, following the same steps as in the proof of Proposition \ref{prop:l2_norm}
  replacing everywhere $f(z)$, $g(z)$, and $F_{\mathcal{J}}(z)$ by $\tilde{f}(z)$, $\tilde{g}(z)$, and 
  $\tilde{F}_{\mathcal{J}}(z)$, respectively, one arrives at the summation formula
 \begin{equation}
  \sum_{n=-\infty}^{\infty}\tilde{f}_{n}^{2}(z)=\tilde{A}(z)\tilde{F}_{\mathcal{J}}'(z),
  \label{eq:sum_f_tilde_squared}
 \end{equation}
 for any $z\neq0$ such that $\tilde{F}_{\mathcal{J}}(z)=0$, where $\tilde{A}(z)=\tilde{f}_{n}(z)/\tilde{g}_{n}(z)$ 
 for any $n\in\Z$ such that $\tilde{g}_{n}(z)\neq0$.
 
 \begin{rem}\label{rem:reg_det_compact}
 There is a close connection between the regularized characteristic function $\tilde{F}_{\mathcal{J}}$ and the theory
 of regularized determinants, see \cite[Chp.~9]{Simon}. First, note that $J$ can be decomposed as $J=\Lambda+UW+WU^{*}$, 
 where $\Lambda e_{n}=\lambda_{n}e_{n}$, $We_{n}=w_{n}e_{n}$, and $Ue_{n}=e_{n+1}$ for all $n\in\Z$. The assumptions 
 $\lambda\in\ell^{p}(\Z)$ and $w\in\ell^{2}(\Z)$ imply that $\Lambda\in\mathcal{S}_{p}$ and $W\in\mathcal{S}_{2}$.
 Consequently, $J\in\mathcal{S}_{p}+\mathcal{S}_{2}\subset\mathcal{S}_{\max(2,p)}$ since $\mathcal{S}_{p}\subset\mathcal{S}_{q}$ 
 for $1\leq p\leq q \leq\infty$. Thus, if $p\geq2$, the regularized determinant $\det_{p}(1-zJ)$ is well defined and is an entire function of $z$. We will show that
 \begin{equation}
  \tilde{F}_{\mathcal{J}}(z)=\det\!_{p}(1-z^{-1}J), \quad \forall z\in\C\setminus\{0\}.
  \label{eq:reg_char_func_determinant_compact}
 \end{equation}

 Let $P_{N}$ stand for the orthogonal projection on the space $\spn\{e_{n} \mid |n|\leq N\}$. Without loss off generality, we may assume that \eqref{eq:assum_compact}
 holds with $p\geq2$. Then, by the above discussion, $J\in\mathcal{S}_{p}$ and one has $P_{N}JP_{N}\to J$ in $\mathcal{S}_{p}$, as $N\to\infty$.
 Further, with the aid of the formula for the determinant of a tridiagonal matrix \cite[Eq.~(13)]{StampachStovicek13} and 
 \cite[Thm.~9.2(d)]{Simon} one obtains
 \begin{align*}
  &\det\!_{p}\left(1-zP_{N}JP_{N}\right)\\
  &\hskip48pt=\left[\prod_{n=-N}^{N}\left(1-z\lambda_{n}\right)\exp\left(\sum_{j=1}^{p-1}\frac{z^{j}\lambda_{n}^{j}}{j}\right)\right]
  \mathfrak{F\!}\left(\left\{ \frac{z\gamma_{k}^{\,2}}{1-z\lambda_{k}}\right\} _{k=-N}^{N}\right)\!.
 \end{align*}
 Now, it suffices to send $N\to\infty$ in the above formula to verify \eqref{eq:reg_char_func_determinant_compact}, where one has to take into account that $\det_{p}(1+\cdot)$ is a continuous
 functional on $\mathcal{S}_{p}$, see \cite[Thm.~9.2(c)]{Simon}, and the formula~\eqref{eq:lim_calF_FJ}. Having the formula \eqref{eq:reg_char_func_determinant_compact}
 at hand, claims of Proposition~\ref{prop:main_compact}, with the exception of the one about vectors spanning the generalized eigenspace, may be deduced from general results of the 
 theory of regularized determinants \cite{Simon}.
 \end{rem}
 
 Finally, we illustrate the results derived within this subsection on a concrete example. We follow the standard notation for hypergeometric series, the Bessel function
 of the first kind, the gamma, and digamma function as it is used, for example, in \cite{AbramowitzStegun}.
 
 \begin{example}
  For $\alpha\in\C\setminus\Z$ and $\beta\in\C\setminus\{0\}$, put 
  \[
   \lambda_{n}:=\frac{1}{n+\alpha} \quad \mbox{ and } \quad w_{n}:=\frac{\beta}{\sqrt{(n+\alpha)(n+1+\alpha)}},
  \]
  where $n\in\Z$. Then the assumptions of the Proposition \ref{prop:main_compact} hold with $p=2$. The sequence 
  $\gamma:\Z\to\C$ satisfying $\gamma_{n}\gamma_{n+1}=w_{n}$, for all $n\in\Z$, can be chosen as
  \[
   \gamma_{n}=\sqrt{\frac{\beta}{n+\alpha}}, \quad \mbox{ for } n\in\Z,
  \]
  and, for $z\notin\Ran(\lambda)\cup\{0\}$, we have
  \begin{align}
   \mathfrak{F\!}\left(\left\{ \frac{\gamma_{k}^{\,2}}{z-\lambda_{k}}\right\} _{k=n+1}^{\infty}\right)&=
   \mathfrak{F\!}\left(\left\{ \frac{\beta}{1-z(k+\alpha)}\right\} _{k=n+1}^{\infty}\right)\nonumber\\
   &={}_{0}F_{1}\left(-;n+1+\alpha-z^{-1},-\beta^{2}z^{-2}\right)\!.
  \label{eq:F_n_example_compact1}
  \end{align}
  The second equality in~\eqref{eq:F_n_example_compact1} follows from \cite[Eq.~9.1.69]{AbramowitzStegun} 
  \begin{equation}
   {}_{0}F_{1}\left(-;\nu+1,-z^{2}\right)=\Gamma(\nu+1)z^{-\nu}J_{\nu}(2z), \quad z\in\C,\ \nu\notin-\N,
   \label{eq:0F1_to_Bessel}
  \end{equation}
  and the identity
  \begin{equation}
   J_{\nu}(2z)=\frac{z^{\nu}}{\Gamma(\nu+1)}\,\mathfrak{F}\!\left(\left\{ \frac{z}{\nu+k}\right\} _{k=1}^{\infty}\right)\!, \quad z\in\C,\ \nu\notin-\N,
   \label{eq:BesselJ_rel_F}
  \end{equation}
  which was proved in \cite[Ex.~11]{StampachStovicek11}. It is easy to see, using the definition of the hypergeometric series ${}_{0}F_{1}$, 
  that the RHS of \eqref{eq:F_n_example_compact1} tends to 1 as $n\to-\infty$. Hence, we have
  \begin{equation}
   F_{\mathcal{J}}(z)=1, \quad \forall z\notin\Ran(\lambda)\cup\{0\}.
  \label{eq:char_func_eq_1_example_compact1}    
  \end{equation}
  
  It follows immediately from \eqref{eq:char_func_eq_1_example_compact1} and Theorem~\ref{thm:main} that 
  $\Ran\lambda\subset\spec_{p}(J)$. Nevertheless, we compute the regularized characteristic function
  and hence evaluate the Hilbert--Schmidt determinant \eqref{eq:reg_char_func_determinant_compact}. By definition \eqref{eq:def_Phi_+}, we have
  \[
    \Phi_{2}^{+}(z)=\prod_{n=1}^{\infty}\left(1-\frac{1}{z(n+\alpha)}\right)\exp\left(\frac{1}{z(n+\alpha)}\right), \quad z\neq0.
  \]
  Note that the zeros of the entire function $\Phi_{2}^{+}(1/z)$ coincide with those of $1/\Gamma(\alpha+1-z)$ which is an entire function of order $1$. 
  Consequently, by applying the Hadamard factorization theorem, we obtain
  \[
   \frac{1}{\Gamma(\alpha+1-z)}=e^{a+bz}\Phi_{2}^{+}\left(\frac{1}{z}\right)=e^{a+bz}\prod_{n=1}^{\infty}\left(1-\frac{z}{n+\alpha}\right)e^{z/(n+\alpha)},
  \]
  for some $a,b\in\C$. By putting $z=0$ in the above formula, one gets $e^{a}=1/\Gamma(\alpha+1)$, while, by equating coefficients at $z$ on both sides, one computes 
  $b=\psi(\alpha+1)=\Gamma'(\alpha+1)/\Gamma(\alpha+1)$. Thus, we have
  \begin{equation}
   \Phi_{2}^{+}(z)=e^{-z^{-1}\psi(\alpha+1)}\frac{\Gamma(\alpha+1)}{\Gamma\left(\alpha+1-z^{-1}\right)}, \quad z\neq0.
  \label{eq:Phi_2_plus_example_compact1}
  \end{equation}
  
  With the aid of \eqref{eq:Phi_2_plus_example_compact1}, one further derives
  \begin{equation}
   \Phi_{2}^{-}(z)=\prod_{n=0}^{\infty}\left(1-\frac{1}{z(\alpha-n)}\right)\exp\left(\frac{1}{z(\alpha-n)}\right)
   =e^{z^{-1}\psi(-\alpha)}\frac{\Gamma(-\alpha)}{\Gamma\left(-\alpha+z^{-1}\right)},
  \label{eq:Phi_2_minus_example_compact1}
  \end{equation}
  for $z\neq0$. At last, by multiplying \eqref{eq:Phi_2_plus_example_compact1} and \eqref{eq:Phi_2_minus_example_compact1} and using 
  the identities \cite[Eqs.~6.1.17, 6.3.7]{AbramowitzStegun}
  \begin{equation}
   \Gamma(z)\Gamma(1-z)=\frac{\pi}{\sin\pi z} \; \mbox{ and } \; \psi(1-z)-\psi(z)=\pi\cot\pi z, \quad z\in\C\setminus\Z,
  \label{eq:Gamma_Psi_ident}
  \end{equation}
  we obtain the regularized characteristic function in the form
  \begin{equation}
   \tilde{F}_{\mathcal{J}}(z)=\Phi_{2}(z)F_{\mathcal{J}}(z)=\frac{\sin\pi(\alpha-z^{-1})}{\sin\pi\alpha}e^{z^{-1}\pi\cot\pi\alpha}, \quad \forall z\neq0,
  \label{eq:char_func_example_compact1}
  \end{equation}
  where \eqref{eq:char_func_eq_1_example_compact1} was used. Equivalently, according to \eqref{eq:reg_char_func_determinant_compact}, we proved that
  \[
   \det\!_{2}(1-zJ)=\frac{\sin\pi(\alpha-z)}{\sin\pi\alpha}e^{z\pi\cot\pi\alpha}, \quad \forall z\in\C.
  \]
  
  Having \eqref{eq:char_func_example_compact1}, Proposition \ref{prop:main_compact} tells us that
  \[
   \spec(J)=\spec_{p}(J)\cup\{0\}=\Ran\lambda\cup\{0\}=\left\{1/(\alpha+n) \mid n\in\Z\right\}\cup\{0\},
  \]
  and all the nonzero eigenvalues have the algebraic multiplicity equal to $1$. Further, by using \eqref{eq:F_n_example_compact1} and \eqref{eq:Phi_2_plus_example_compact1}, we readily
  compute
  \begin{align*}
   \tilde{f}_{n}(z)=\left(\frac{\beta}{z}\right)^{n}\sqrt{\frac{\alpha+n}{\alpha}}e^{-z^{-1}\psi(\alpha+1)}&\frac{\Gamma(\alpha+1)}{\Gamma(\alpha+1+n-z^{-1})}\\
   &\hskip12pt\times{}_{0}F_{1}\left(-;n+1+\alpha-z^{-1},-\beta^{2}z^{-2}\right)\!,
  \end{align*}
  which, with the aid of~\eqref{eq:0F1_to_Bessel}, can be rewritten in terms of Bessel functions as
  \[
   \tilde{f}_{n}(z)=\left(\frac{\beta}{z}\right)^{-\alpha+z^{-1}}\Gamma(\alpha+1)e^{-z^{-1}\psi(\alpha+1)}
   \sqrt{\frac{\alpha+n}{\alpha}}J_{n-\alpha-z^{-1}}\left(\frac{2\beta}{z}\right)\!, \quad \forall z\neq0.
  \]
  Further, if we put $z=z_{N}:=1/(N+\alpha)$, for $N\in\Z$, in the above formula and
  omit the factor not depending on $n$, we obtain the $n$th entry of the eigenvector corresponding to the eigenvalue $z_{N}$ in the form
  \[
   v_{n}(z_{N})=\sqrt{\alpha+n}J_{n-N}\left(2\beta(N+\alpha)\right)\!, \quad n,N\in\Z.
  \]
  
  The method based on the characteristic function does not tell us whether the points from $\der(\lambda)$ are
  spectral points of $J$ or not. In this example, we know that $0\in\spec(J)$ since $J$ is compact.
  On the other hand, $0$ is not an eigenvalue of $J$. Indeed, since $w_{n}=\beta\sqrt{\lambda_{n}\lambda_{n+1}}$ for all $n\in\Z$,
  the solution of the eigenvalue equation $Ju=0$ can be obtained by solving the second-order difference 
  equation with constant coefficients:
  \[
   \beta v_{n-1}+v_{n}+\beta v_{n+1}=0, \quad n\in\Z,
  \]
  where $v_{n}:=\sqrt{\lambda_{n}}u_{n}$. By inspection of the asymptotic behavior of the solution $v_{n}$ for $n\to\pm\infty$, one concludes that there is no nontrivial square summable solution of $Ju=0$
  for all $\alpha\in\C\setminus\Z$ and $\beta\in\C\setminus\{0\}$.

  Without going into details, let us also remark that
  \begin{align*}
   \tilde{g}_{n}(z)=(-1)^{n-1}\beta^{\alpha-z^{-1}}z^{-\alpha-1+z^{-1}}\Gamma(-\alpha)e^{z^{-1}\psi(-\alpha)}&\sqrt{\alpha(\alpha+n)}\\
   &\hskip12pt\times J_{-n-\alpha+z^{-1}}\left(\frac{2\beta}{z}\right)\!,
  \end{align*}
  for $z\neq0$. With the aid of identity \cite[Eq.~9.1.5]{AbramowitzStegun}
  \begin{equation}
   J_{-n}(z)=(-1)^{n}J_{n}(z),\quad \forall n\in\Z,
  \label{eq:Bessel_alt_int}
  \end{equation}
  one further verifies that
  \[
   \tilde{A}(z_{N})=\frac{\tilde{f}_{n}(z_{N})}{\tilde{g}_{n}(z_{N})}=(-1)^{N}\beta^{2N}z_{N}^{-2N+1}\frac{\Gamma(1+\alpha)}{\Gamma(1-\alpha)}
   e^{-z_{N}^{-1}\left(\psi(\alpha+1)+\psi(-\alpha)\right)}.
  \]
  In addition, one has
  \[
   F_{\mathcal{J}}'(z_{N})=(-1)^{N}\frac{\pi}{\sin\pi\alpha}z_{N}^{-2}e^{z_{N}^{-1}\left(\psi(-\alpha)-\psi(\alpha+1)\right)}.
  \]
  So we can substitute for all the functions into the formula \eqref{eq:sum_f_tilde_squared} which, after some simplifications, leads to the identity
  \begin{equation}
   \sum_{n=-\infty}^{\infty}(\alpha+n)J_{n-N}^{2}\left(2\beta(N+\alpha)\right)=N+\alpha,
  \label{eq:towards_sum_Bessel}
  \end{equation}
  for $\alpha\notin\Z$ and $\beta\neq0$. Since  
  \[
    \sum_{n=-\infty}^{\infty}nJ_{n}^{2}(z)=0, \quad\forall z\in\C,
  \]
  as one deduces with the aid of~\eqref{eq:Bessel_alt_int}, the identity~\eqref{eq:towards_sum_Bessel} 
  yields the well-known summation formula for Bessel functions \cite[Eq.~9.1.76]{AbramowitzStegun}
  \[
   \sum_{n=-\infty}^{\infty}J_{n}^{2}(z)=1,
  \]
  which holds true for all $z\in\C$.
  
  At last, one can make use of \eqref{eq:Green_func_compact} to deduce that, for $i\geq j$, the Green function reads
  \[
   \mathcal{G}_{i,j}(z)=\frac{(-1)^{j+1}\pi\sqrt{(i+\alpha)(j+\alpha)}}{z\sin\pi\left(\alpha-z^{-1}\right)}
   J_{i+\alpha-z^{-1}}\left(\frac{2\beta}{z}\right)J_{-j-\alpha+z^{-1}}\left(\frac{2\beta}{z}\right)\!,
  \]
  where $z\notin1/(\Z+\alpha)\cup\{0\}$.
\end{example}

\subsection{The case of compact resolvent}

In this subsection, we impose an additional assumption to the diagonal sequence of $\mathcal{J}$ by requiring
\begin{equation}
 0\notin\Ran(\lambda) \quad \mbox{ and } \quad \sum_{n=-\infty}^{\infty}\frac{1}{|\lambda_{n}|^{p}}<\infty,
 \label{eq:assum_compact_resolvent}
\end{equation}
for some $p\in\N$. Clearly, \eqref{eq:assum_compact_resolvent} implies $|\lambda_{n}|\to\infty$, as $n\to\pm\infty$, and so
$\der(\lambda)=\emptyset$. Since $0\in\C_{0}^{\lambda}$ the condition \eqref{eq:assum_sum_w} can be replaced by
\[
 \sum_{n=-\infty}^{\infty}\left|\frac{w_{n}^{2}}{\lambda_{n}\lambda_{n+1}}\right|<\infty.
\]
Note also that the assumption $0\notin\Ran(\lambda)$ is not very restrictive since one can always shift the diagonal sequence $\lambda$ by a constant that causes only a shift in the spectral 
parameter.

Under the condition \eqref{eq:assum_compact_resolvent}, the functions
\begin{equation}
 \Psi_{p}^{+}(z):=\prod_{n=1}^{\infty}\left(1-\frac{z}{\lambda_{n}}\right)\exp\left(\sum_{j=1}^{p-1}\frac{1}{j}\left(\frac{z}{\lambda_{n}}\right)^{\! j}\right)\!,
 \label{eq:def_Psi_+}
\end{equation}
\begin{equation}
 \Psi_{p}^{-}(z):=\prod_{n=-\infty}^{0}\left(1-\frac{z}{\lambda_{n}}\right)\exp\left(\sum_{j=1}^{p-1}\frac{1}{j}\left(\frac{z}{\lambda_{n}}\right)^{\! j}\right)\!,
 \label{eq:def_Psi_-}
\end{equation}
and
\begin{equation}
 \Psi_{p}(z):=\Psi_{p}^{-}(z)\Psi_{p}^{+}(z),
 \label{eq:def_Psi}
\end{equation}
are well defined and entire, see \cite[Chp.~11]{Conway}. Thus, to regularize the key functions we put
 \[
 \tilde{f}(z):=\Psi_{p}^{+}(z)f(z), \quad \tilde{g}(z):=\Psi_{p}^{-}(z)g(z), \quad\mbox{ and }\quad \tilde{F}_{\mathcal{J}}(z):=\Psi_{p}(z)F_{\mathcal{J}}(z),
 \]
for all $z\in\C$. With some abuse of notation, we use the same notation as in \eqref{eq:f_g_char_func_regularized_compact}, although the functions $f$, $g$ and $F_{\mathcal{J}}$ are regularized by
different functions \eqref{eq:def_Psi_+}, \eqref{eq:def_Psi_-}, and \eqref{eq:def_Psi}. This should not cause any confusion since we always work with these regularized (tilde) versions
of the functions $f$, $g$ and $F_{\mathcal{J}}$ within the subsection where they are defined exclusively.

\begin{prop}\label{prop:main_compact_resolvent}
 Let $\lambda,w:\Z\to\C\setminus\{0\}$ be such that 
 \[
 \sum_{n=-\infty}^{\infty}\left|\frac{w_{n}^{2}}{\lambda_{n}\lambda_{n+1}}\right|<\infty \quad \mbox{ and } \quad \sum_{n=-\infty}^{\infty}\frac{1}{|\lambda_{n}|^{p}}<\infty, \quad \mbox{ for some } p\geq1.
 \]
 Further, let $\tilde{F}_{\mathcal{J}}$ do not vanish identically on $\C$. Then $J_{\min}=J_{\max}=:J$ and
 \[
  \spec(J)=\spec_{p}(J)=\{z\in\C \mid \tilde{F}_{\mathcal{J}}(z)=0\}.
 \]
 In addition, the algebraic multiplicity $\nu_{a}(z)$ of an eigenvalue $z$ of $J$ coincides with the order of $z$ as a zero of $\tilde{F}_{\mathcal{J}}$
 and the space of generalized eigenvectors is spanned by the vectors $\tilde{f}(z),\tilde{f}'(z),\dots,\tilde{f}^{(\nu_{a}(z)-1)}(z)$.
\end{prop}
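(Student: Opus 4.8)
The plan is to reduce everything to Theorem~\ref{thm:main}, Corollary~\ref{cor:compact_resolvent}, and Theorem~\ref{thm:multiplicity}, the only genuinely new ingredient being the passage from $F_{\mathcal{J}},f,g$ to their globally regularized counterparts $\tilde{F}_{\mathcal{J}},\tilde{f},\tilde{g}$. First I would record the structural consequences of the hypotheses: the summability $\sum_{n}|\lambda_{n}|^{-p}<\infty$ forces $|\lambda_{n}|\to\infty$ as $n\to\pm\infty$, whence $\der(\lambda)=\emptyset$, the set $\C\setminus\der(\lambda)=\C$ is connected, and $0\in\C_{0}^{\lambda}$ because $0\notin\Ran(\lambda)$. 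Evaluating the convergence condition~\eqref{eq:assum_sum_w} at $z_{0}=0$ turns it into the first displayed hypothesis, so \eqref{eq:assum_sum_w} holds. Since $\Psi_{p}$ is entire with zeros exactly at the points $\lambda_{n}$, of order $r(z)$ at $z\in\Ran(\lambda)$, while $F_{\mathcal{J}}$ is meromorphic on $\C$ with a pole of order at most $r(z)$ there, the product $\tilde{F}_{\mathcal{J}}=\Psi_{p}F_{\mathcal{J}}$ is entire and $\Psi_{p}$ is nonvanishing on $\C_{0}^{\lambda}$; this regularization is precisely what removes the singularities of $f,g,F_{\mathcal{J}}$ at $\Ran(\lambda)$, making $\tilde{f},\tilde{g},\tilde{F}_{\mathcal{J}}$ entire. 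Consequently $\tilde{F}_{\mathcal{J}}\not\equiv0$ on $\C$ is equivalent to $F_{\mathcal{J}}\not\equiv0$ on $\C_{0}^{\lambda}$, so the assumptions of Theorem~\ref{thm:main} are met and $J_{\min}=J_{\max}=:J$.

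For the spectral identity I would invoke Corollary~\ref{cor:compact_resolvent}, whose hypotheses now hold, to obtain $\spec(J)=\spec_{p}(J)=\mathfrak{Z}(\mathcal{J})$ together with compactness of the resolvent. It then remains to identify $\mathfrak{Z}(\mathcal{J})$ with the zero set of $\tilde{F}_{\mathcal{J}}$. For $z\in\C_{0}^{\lambda}$ one has $r(z)=0$ and $\Psi_{p}(z)\neq0$, so $z\in\mathfrak{Z}(\mathcal{J})\iff F_{\mathcal{J}}(z)=0\iff\tilde{F}_{\mathcal{J}}(z)=0$. For $z\in\Ran(\lambda)$ I would factor $\Psi_{p}(u)=(u-z)^{r(z)}\Psi_{*}(u)$ with $\Psi_{*}(z)\neq0$, so that $\tilde{F}_{\mathcal{J}}(z)=\Psi_{*}(z)\lim_{u\to z}(u-z)^{r(z)}F_{\mathcal{J}}(u)$, whence the defining condition of $\mathfrak{Z}(\mathcal{J})$ again matches $\tilde{F}_{\mathcal{J}}(z)=0$. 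This yields the second claim.

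For the multiplicity statement I would repeat the proof of Theorem~\ref{thm:multiplicity}(ii) with $f,g,F_{\mathcal{J}}$ replaced by $\tilde{f},\tilde{g},\tilde{F}_{\mathcal{J}}$. Two facts make this legitimate: first, $\tilde{f},\tilde{g}$ still solve $\mathcal{J}u=zu$ for each fixed $z$, since multiplication by a scalar function of $z$ preserves solutions; second, their Wronskian reproduces the regularized characteristic function, $W(\tilde{f},\tilde{g})=\Psi_{p}^{+}\Psi_{p}^{-}W(f,g)=\tilde{F}_{\mathcal{J}}$, giving the analog $\tilde{F}_{\mathcal{J}}^{(j)}=w_{n}C_{n}^{(j)}(\tilde{f},\tilde{g})$ of~\eqref{eq:F_rel_Casoratian}. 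Because $\tilde{F}_{\mathcal{J}}$ is entire and not identically zero, its zeros are isolated; the Green function, which in the present setting reads $\mathcal{G}_{i,j}=-\tilde{f}_{i}\tilde{g}_{j}/\tilde{F}_{\mathcal{J}}$ for $i\geq j$ (and symmetrically), has at a zero $z_{0}$ of order $n_{0}$ a pole of order at most $n_{0}$, yielding $\nu_{a}(z_{0})\leq n_{0}$; and Lemma~\ref{lem:annihil} then produces the linearly independent family $\tilde{f}(z_{0}),\dots,\tilde{f}^{(n_{0}-1)}(z_{0})$ of generalized eigenvectors spanning the generalized eigenspace.

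The main obstacle is the square-summability of all derivatives $\tilde{f}^{(j)}(z_{0})$ at $+\infty$ (and $\tilde{g}^{(j)}(z_{0})$ at $-\infty$) when $z_{0}\in\Ran(\lambda)$, a range not covered by Lemma~\ref{lem:sum_der_P_at_inf}, which is stated only for $\C_{0}^{\lambda}$. I would handle this by noting that, for indices $n$ larger than every $k$ with $\lambda_{k}=z_{0}$, the expression~\eqref{eq:f_n_n_large} for $f_{n}$ no longer has a singularity at $z_{0}$, so the estimates underlying Lemma~\ref{lem:sum_der_P_at_inf} go through with $\delta$ replaced by $\dist\!\big(z_{0},\overline{\{\lambda_{k}\mid\lambda_{k}\neq z_{0}\}}\big)$, which is strictly positive precisely because $z_{0}\notin\der(\lambda)$. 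Multiplying by the fixed entire factor $\Psi_{p}^{+}$, whose derivatives are bounded on a neighborhood of $z_{0}$, and applying the Leibniz rule then furnishes a summable majorant for $\tilde{f}^{(j)}(z_{0})$ at $+\infty$, exactly as in the proof of Theorem~\ref{thm:multiplicity}.
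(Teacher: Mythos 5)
Your proposal is correct and follows essentially the same route as the paper, whose own proof is a two-sentence sketch deferring the first two claims to Corollary~\ref{cor:compact_resolvent} and the multiplicity claim to a rerun of the proof of Theorem~\ref{thm:multiplicity} with $f,g,F_{\mathcal{J}}$ replaced by $\tilde{f},\tilde{g},\tilde{F}_{\mathcal{J}}$. You supply the details the paper omits --- in particular the identification of $\mathfrak{Z}(\mathcal{J})$ with the zero set of $\tilde{F}_{\mathcal{J}}$ via the factorization of $\Psi_{p}$, and the extension of the summability estimates of Lemma~\ref{lem:sum_der_P_at_inf} to $z_{0}\in\Ran(\lambda)$ using $\der(\lambda)=\emptyset$ --- and these details are handled correctly.
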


\begin{proof}
 The first two claims of the statement follow readily from Corollary~\ref{cor:compact_resolvent}. The part on the algebraic multiplicity
 and the vectors spanning the space of generalized eigenvectors is to be deduced in the similar way as in Theorem~\ref{thm:multiplicity}
 with functions $f$ and $g$ being replaced by their regularized extensions $\tilde{f}$ and~$\tilde{g}$.
\end{proof}

Under the assumptions of Proposition \ref{prop:main_compact_resolvent}, one also has the summation formula for squares of the elements of eigenvectors as well 
as the formula for the Green function which are of the same form as in \eqref{eq:sum_f_tilde_squared} and \eqref{eq:Green_func_compact}.

\begin{rem}
Similarly as in Remark~\ref{rem:reg_det_compact}, there is a connection between $\tilde{F}_{\mathcal{J}}$ and regularized determinants. 
However, this connection is not that straightforward as in the case of compact $J$.

Under the assumptions of Proposition \ref{prop:main_compact_resolvent}, the auxiliary Jacobi operator $A(z)$, determined by
\[
 A(z)e_{n}:=\frac{w_{n-1}}{\sqrt{\lambda_{n-1}\lambda_{n}}}e_{n-1}-\frac{z}{\lambda_{n}}e_{n}+\frac{w_{n}}{\sqrt{\lambda_{n}\lambda_{n+1}}}e_{n+1}, \quad n\in\Z,
\]
is bounded for all $z\in\C$. In fact, the assumptions of Proposition~\ref{prop:main_compact_resolvent} imply 
$A(z)\in\mathcal{S}_{p}+\mathcal{S}_{2}\subset\mathcal{S}_{\max(2,p)}$, for all $z\in\C$. Hence, assuming without loss of generality
that $p\geq2$, $A(z)\in\mathcal{S}_{p}$ for all $z\in\C$. Since $J_{\min}=J_{\max}=:J$, the operator $J$ coincides with the closure of the operator sum $\Lambda+UW+WU^{*}$ where
$\Lambda$, $W$ and $U$ are defined in Remark \ref{rem:reg_det_compact}. Thus formally (not taking care about domains), one has
\[
 J-z=\Lambda^{1/2}(1+A(z))\Lambda^{1/2}, \quad \forall z\in\C.
\]
Consequently, one expects that $z\in\rho(J)$ if and only if $-1\in\rho(A(z))$. However, this equivalence is true, 
indeed, since, by using Definition~\ref{def:char_fction}, one verifies that $F_{\mathcal{J}}(z)=F_{A(z)}(-1)$ for all 
$z\notin\Ran(\lambda)$. The rest then follows from the claim~(ii) of Theorem~\ref{thm:main}.

Since the matrix of $A(z)$ is tridiagonal, one can make use of the formula \cite[Eq.~(13)]{StampachStovicek13} to show that
 \[
  \det\left(1+P_{N}A(z)P_{N}\right)=\left[\prod_{n=-N}^{N}\left(1-\frac{z}{\lambda_{n}}\right)\right]
  \mathfrak{F\!}\left(\left\{ \frac{\gamma_{k}^{\,2}}{\lambda_{k}-z}\right\} _{k=-N}^{N}\right)\!,
 \]
where $P_{N}$ is the projection on $\spn\{e_{n}\mid |n|\leq N\}$. Further, using \cite[Thm.~9.2(d)]{Simon} one obtains
 \begin{align*}
  &\det\!_{p}\left(1+P_{N}A(z)P_{N}\right)\\
  &\hskip42pt=\left[\prod_{n=-N}^{N}\left(1-\frac{z}{\lambda_{n}}\right)  \exp\left(\sum_{j=1}^{p-1}\frac{1}{j}\left(\frac{z}{\lambda_{n}}\right)^{\! j}\right)\right]
  \mathfrak{F\!}\left(\left\{ \frac{\gamma_{k}^{\,2}}{\lambda_{k}-z}\right\}_{k=-N}^{N}\right)\!.
 \end{align*}
Finally, by sending $N\to\infty$ and taking into account \cite[Thm.~9.2(c)]{Simon} together with the formula \eqref{eq:lim_calF_FJ}, one arrives at the identity
\[
 \det\!_{p}\left(1+A(z)\right)=\tilde{F}_{\mathcal{J}}(z), \quad \forall z\in\C.
\]
\end{rem}

Probably the most simple nontrivial example illustrating the situation treated by 
Proposition~\ref{prop:main_compact_resolvent} is the following one.

\begin{example}
 Put $\lambda_{n}:=n$ and $w_{n}:=w\in\C\setminus\{0\}$ for all $n\in\Z$. Then one can take $\gamma_{n}=\sqrt{w}$ 
 and, for $n\in\Z$ and $z\in\C\setminus\Z$, one has
   \begin{equation}
   \mathfrak{F\!}\left(\left\{ \frac{\gamma_{k}^{\,2}}{z-\lambda_{k}}\right\} _{k=n}^{\infty}\right)=
   \mathfrak{F\!}\left(\left\{ \frac{w}{k-z}\right\} _{k=n}^{\infty}\right)={}_{0}F_{1}\left(-;n-z,-w^{2}\right)\!,
  \label{eq:F_n_example_compact_resolvent1}
  \end{equation}
  as it follows from \eqref{eq:BesselJ_rel_F} and \eqref{eq:0F1_to_Bessel}. 
  By sending $n\to-\infty$ in \eqref{eq:F_n_example_compact_resolvent1}, one gets
    \begin{equation}
   F_{\mathcal{J}}(z)=1, \quad \forall z\in\C\setminus\Z.
  \label{eq:char_func_eq_1_example_compact_resolvent1}    
  \end{equation}
  
  Note that the assumptions of Proposition \ref{prop:main_compact_resolvent} are satisfied with $p=2$ (see also Remark \ref{rem:two_assum_comment}) with the only exception that $0\in\Ran(\lambda)$.
  However, this is not an essential obstacle and it can be easily overcome by taking as the regularizing functions
  \[
   \Psi_{2}^{+}(z):=\prod_{n=1}^{\infty}\left(1-\frac{z}{n}\right)e^{z/n}=\frac{e^{\gamma z}}{\Gamma(1-z)},
  \]
  \[
   \Psi_{2}^{-}(z):=z\prod_{n=-\infty}^{-1}\left(1-\frac{z}{n}\right)e^{z/n}=\frac{e^{-\gamma z}}{\Gamma(z)},
  \]
  and 
  \[
   \Psi_{2}(z):=\Psi_{2}^{-}(z)\Psi_{2}^{+}(z)=\frac{1}{\Gamma(1-z)\Gamma(z)}=\frac{\sin\pi z}{\pi},
  \]
  defined for all $z\in\C$. In the above equalities, we have used the first identity from~\eqref{eq:Gamma_Psi_ident} and the well-known 
  Hadamard product formula for the entire function $1/\Gamma(z)$ involving Euler's constant $\gamma$, see \cite[Eq.~6.1.3]{AbramowitzStegun}.
  With this choice of regularizing functions, Proposition \ref{prop:main_compact_resolvent} remains valid in the same form.
  Taking into account \eqref{eq:char_func_eq_1_example_compact_resolvent1}, the regularized characteristic function reads
  \[
   \tilde{F}_{\mathcal{J}}(z)=\Psi_{2}(z)=\frac{\sin\pi z}{\pi}, \quad \forall z\in\C.
  \]
  Consequently, $\spec(J)=\spec_{p}(J)=\Z$ and all the eigenvalues have the algebraic multiplicity equal to 1. Further,
  \[
   \tilde{f}_{n}(z)=\Psi_{2}^{+}(z)f_{n}(z)=(-1)^{n}e^{\gamma z}w^{z}J_{n-z}(2w), \quad \forall z\in\C,
  \]
  and hence the $n$th component of the eigenvector corresponding to the eigenvalue $N\in\Z$ of $J$ can be chosen as
  \[
   v_{n}(N)=(-1)^{n}J_{n-N}(2w).
  \]
\end{example}

\subsection{The combined case}

Finally, we discuss a situation which is a combination of the previous two cases. We suppose that 
there exists $p\geq1$ such that
\begin{equation}
 \sum_{n=1}^{\infty}|\lambda_{n}|^{p}<\infty \quad \mbox{ and } \quad \sum_{n=-\infty}^{0}\frac{1}{|\lambda_{n}|^{p}}<\infty,
 \label{eq:assum_combined}
\end{equation}
where it is assumed that $\lambda_{n}\neq0$ for $n\leq0$. Clearly, $\der(\lambda)=\{0\}$. One can show that, under the assumption \eqref{eq:assum_combined}, the condition
\eqref{eq:assum_sum_w} holds if and only if 
\[
 \sum_{n=1}^{\infty}|w_{n}|^{2}<\infty \quad \mbox{ and } \quad \sum_{n=-\infty}^{0}\left|\frac{w_{n}^{2}}{\lambda_{n}\lambda_{n+1}}\right|<\infty.
\]
The way to regularize $f$, $g$, and $F_{\mathcal{J}}$ is now the following:
 \[
 \tilde{f}(z):=\Phi_{p}^{+}(z)f(z), \quad \tilde{g}(z):=\Psi_{p}^{-}(z)g(z),
 \]
 and
 \[
  \tilde{F}_{\mathcal{J}}(z):=\Phi_{p}^{+}(z)\Psi_{p}^{-}(z)F_{\mathcal{J}}(z),
 \]
for $z\in\C\setminus\{0\}$, where $\Phi_{p}^{+}$ and $\Psi_{p}^{-}$ are given by~\eqref{eq:def_Phi_+} and \eqref{eq:def_Psi_-}, respectively.

Since there is no significant difference in the derivation of the following statement in comparison with Propositions \ref{prop:main_compact} and \ref{prop:main_compact_resolvent},
we omit the proof completely.

\begin{prop}\label{prop:main_combined}
 Let $\lambda,w:\Z\to\C\setminus\{0\}$ be such that
 \[
  \sum_{n=1}^{\infty}|\lambda_{n}|^{p}<\infty \quad \mbox{ and } \quad \sum_{n=-\infty}^{0}\frac{1}{|\lambda_{n}|^{p}}<\infty, \quad \mbox{ for some } p\geq1,
 \]
 and
 \[
 \sum_{n=1}^{\infty}|w_{n}|^{2}<\infty \quad \mbox{ and } \quad \sum_{n=-\infty}^{0}\left|\frac{w_{n}^{2}}{\lambda_{n}\lambda_{n+1}}\right|<\infty.
 \]
  Further, let $\tilde{F}_{\mathcal{J}}$ does not vanish identically on $\C$. Then $J_{\min}=J_{\max}=:J$ and
  \[
  \spec(J)\setminus\{0\}=\spec_{p}(J)\setminus\{0\}=\{z\in\C\setminus\{0\} \mid \tilde{F}_{\mathcal{J}}(z)=0\}.
  \]
  In addition, the algebraic multiplicity $\nu_{a}(z)$ of a nonzero eigenvalue $z$ of $J$ coincides with the order of $z$ as a zero of $\tilde{F}_{\mathcal{J}}$
  and the space of generalized eigenvectors is spanned by the vectors $\tilde{f}(z),\tilde{f}'(z),\dots,\tilde{f}^{(\nu_{a}(z)-1)}(z)$.
\end{prop}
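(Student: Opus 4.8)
The plan is to reduce everything to Theorems~\ref{thm:main} and~\ref{thm:multiplicity} once the present hypotheses are shown to lie in their scope and the \emph{mixed} regularizing factor $\Phi_p^+\Psi_p^-$ is shown to account for all the poles of $F_{\mathcal{J}}$. First I would record that $\sum_{n\geq1}|\lambda_n|^p<\infty$ forces $\lambda_n\to0$ as $n\to+\infty$ while $\sum_{n\leq0}|\lambda_n|^{-p}<\infty$ forces $|\lambda_n|\to\infty$ as $n\to-\infty$, so $\der(\lambda)=\{0\}$; in particular every point of $\Ran(\lambda)\setminus\{0\}$ is isolated in $\overline{\Ran(\lambda)}$ and $\C\setminus\der(\lambda)=\C\setminus\{0\}$ is connected. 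Since $0\in\C_0^\lambda$, the convergence condition \eqref{eq:assum_sum_w} evaluated at $z_0=0$ reduces to the two displayed conditions on $w$, which are assumed, so \eqref{eq:assum_sum_w} holds. Because $\Phi_p^+\Psi_p^-$ is analytic and not identically zero on $\C\setminus\{0\}$, the hypothesis that $\tilde{F}_{\mathcal{J}}$ does not vanish identically is equivalent, by the identity theorem on the connected set $\C_0^\lambda$, to $F_{\mathcal{J}}\not\equiv0$ on $\C_0^\lambda$. Thus both hypotheses of Theorem~\ref{thm:main} hold, yielding $J_{\min}=J_{\max}=:J$ and $\mathfrak{Z}(\mathcal{J})=\spec_p(J)\setminus\{0\}=\spec(J)\setminus\{0\}$.

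Next I would identify $\mathfrak{Z}(\mathcal{J})$ with the nonzero zero set of $\tilde{F}_{\mathcal{J}}$. The factors \eqref{eq:def_Phi_+} and \eqref{eq:def_Psi_-} are analytic on $\C\setminus\{0\}$ with zeros precisely at $\lambda_n$ for $n\geq1$ and for $n\leq0$, respectively, so their product has a zero of order exactly $r(z)$ at each $z\in\Ran(\lambda)\setminus\{0\}$ and is nonvanishing on $\C_0^\lambda$. Writing $\Phi_p^+(u)\Psi_p^-(u)=(u-z)^{r(z)}h(u)$ with $h(z)\neq0$ near such a point, one gets $\tilde{F}_{\mathcal{J}}(z)=h(z)\lim_{u\to z}(u-z)^{r(z)}F_{\mathcal{J}}(u)$, so $\tilde{F}_{\mathcal{J}}$ extends analytically across $\Ran(\lambda)\setminus\{0\}$ and vanishes there exactly on $\mathfrak{Z}(\mathcal{J})$; on $\C_0^\lambda$, where $r(z)=0$, vanishing of $\tilde{F}_{\mathcal{J}}$ is equivalent to vanishing of $F_{\mathcal{J}}$. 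Hence $\{z\in\C\setminus\{0\}\mid\tilde{F}_{\mathcal{J}}(z)=0\}=\mathfrak{Z}(\mathcal{J})$, which is the displayed spectral identity.

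For the multiplicity statement I would replay the proof of Theorem~\ref{thm:multiplicity} with $f,g,F_{\mathcal{J}}$ replaced by $\tilde{f},\tilde{g},\tilde{F}_{\mathcal{J}}$. Since the regularizing factors are independent of the index, $\tilde{f}(z)$ and $\tilde{g}(z)$ remain solutions of $\mathcal{J}u=zu$ and their Wronskian satisfies $W(\tilde{f}(z),\tilde{g}(z))=\Phi_p^+(z)\Psi_p^-(z)W(f(z),g(z))=\tilde{F}_{\mathcal{J}}(z)$; differentiating in $z$ gives the analogue of \eqref{eq:F_rel_Casoratian}, namely $\tilde{F}_{\mathcal{J}}^{(j)}(z)=w_n C_n^{(j)}(\tilde{f}(z),\tilde{g}(z))$. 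Fixing a nonzero zero $z_0$ of order $n_0$, the estimate $\nu_a(z_0)\leq n_0$ follows from the pole structure of the Green function \eqref{eq:Green_func_compact} exactly as before (the eigenvalue being isolated since $F_{\mathcal{J}}$ is analytic and not identically zero on the connected set $\C_0^\lambda$), while square summability of every derivative $\tilde{f}^{(j)}(z_0)$ at $+\infty$ and $\tilde{g}^{(j)}(z_0)$ at $-\infty$ follows from Lemma~\ref{lem:sum_der_P_at_inf}, the values $\Phi_p^+(z_0)$, $\Psi_p^-(z_0)$ and their derivatives being mere finite constants at the fixed point $z_0\neq0$. Lemma~\ref{lem:annihil}, applied to $\mathcal{J}-z_0$, then shows $\{\tilde{f}(z_0),\dots,\tilde{f}^{(n_0-1)}(z_0)\}$ is a linearly independent family of generalized eigenvectors, giving $\nu_a(z_0)\geq n_0$, hence equality and the claimed spanning set.

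I expect no deep obstacle, since the argument is essentially a gluing of Propositions~\ref{prop:main_compact} and~\ref{prop:main_compact_resolvent}; the one point demanding care is the asymmetry of the regularization. The factor $\Phi_p^+$ (analytic on $\C\setminus\{0\}$, cancelling the poles coming from indices $n\geq1$) must be paired with $f$ at $+\infty$, while $\Psi_p^-$ (entire, cancelling the contributions from $n\leq0$) must be paired with $g$ at $-\infty$, and one must check that at each end the zero and pole orders match \emph{separately}, so that $\tilde{f}$, $\tilde{g}$ and $\tilde{F}_{\mathcal{J}}$ are genuinely analytic on $\C\setminus\{0\}$. The accumulation point $0\in\der(\lambda)$ must be excluded throughout, which is why — in contrast to the compact case, where compactness forces $0\in\spec(J)$ — the conclusion describes only $\spec(J)\setminus\{0\}$ and says nothing about whether $0$ belongs to the spectrum.
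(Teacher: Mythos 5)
Your overall strategy is exactly the one the paper intends: the paper omits the proof of Proposition~\ref{prop:main_combined} entirely, referring back to Propositions~\ref{prop:main_compact} and~\ref{prop:main_compact_resolvent}, which in turn reduce to Theorems~\ref{thm:main} and~\ref{thm:multiplicity} with $f$, $g$, $F_{\mathcal{J}}$ replaced by their regularized versions. Your identification of the nonzero zero set of $\tilde{F}_{\mathcal{J}}$ with $\mathfrak{Z}(\mathcal{J})$ via the exact order $r(z)$ of the zeros of $\Phi_p^{+}\Psi_p^{-}$, the identity-theorem argument showing that hypothesis (ii) of Theorem~\ref{thm:main} is equivalent to $\tilde{F}_{\mathcal{J}}\not\equiv0$, and the replay of the multiplicity argument with $\tilde{f},\tilde{g}$ are all sound; your closing remarks correctly isolate the asymmetry of the regularization as the one point where the two earlier propositions must be genuinely glued rather than quoted.

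There is, however, one concrete error in your verification of the convergence hypothesis. You assert that $0\in\C_{0}^{\lambda}$ and that \eqref{eq:assum_sum_w} evaluated at $z_{0}=0$ reduces to the two displayed conditions on $w$. Both claims fail in the combined case: since $\sum_{n\geq1}|\lambda_{n}|^{p}<\infty$ forces $\lambda_{n}\to0$ as $n\to+\infty$, the origin is an accumulation point of $\Ran(\lambda)$, so $0\in\der(\lambda)\subset\overline{\Ran(\lambda)}$ and $0\notin\C_{0}^{\lambda}$ (this is precisely why the conclusion only addresses $\spec(J)\setminus\{0\}$, as you observe at the end). Moreover, the series \eqref{eq:assum_sum_w} at $z_{0}=0$ is $\sum_{n}|w_{n}^{2}/(\lambda_{n}\lambda_{n+1})|$ over all $n$, and for $n\geq1$, where $\lambda_{n}\to0$, this is a far stronger requirement than $\sum_{n\geq1}|w_{n}|^{2}<\infty$. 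The correct verification picks any $z_{0}\in\C_{0}^{\lambda}$ and splits the sum at $n=0$: as $n\to+\infty$ the denominators tend to $|z_{0}|^{2}$, giving equivalence with $\sum|w_{n}|^{2}<\infty$, while as $n\to-\infty$ they are asymptotic to $|\lambda_{n}\lambda_{n+1}|$, giving equivalence with the second condition. (You have in effect imported the justification from the compact-resolvent case, where $0\notin\Ran(\lambda)$ and $\der(\lambda)=\emptyset$ do place $0$ in $\C_{0}^{\lambda}$.) With this step repaired the argument goes through; the only remaining soft spot, which the paper glosses over as well, is that for a nonzero eigenvalue $z_{0}\in\Ran(\lambda)$ the square-summability of $\tilde{f}^{(j)}(z_{0})$ cannot be obtained by multiplying the (there undefined) $f_{n}^{(j)}(z_{0})$ by constants, but requires estimating the analytic continuation of $\Phi_{p}^{+}(u)\mathcal{P}_{n}(u)$ near $u=z_{0}$ directly.
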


Also formulas \eqref{eq:Green_func_compact} and \eqref{eq:sum_f_tilde_squared} remain valid in the same form under the assumptions of Proposition \ref{prop:main_combined}.

The following example has been treated in \cite[Sec.~5]{StampachStovicek15} in the real case. Let us revisit the example 
allowing the parameters to be complex and using the characteristic function approach presented here. We shall follow the 
standard notation for basic hypergeometric series as in~\cite{GasperRahman}.

\begin{example}
 Let $\lambda_{n}:=q^{n}$ and $w_{n}:=\beta q^{n/2}$, for $n\in\Z$, where $q,\beta\in\C$, $0<|q|<1$, and $\beta\neq0$. Then we can put $\gamma_{n}=\beta^{1/2}q^{(2n-1)/8}$ for all $n\in\Z$.
 
 It has been proved in \cite[Prop.~15]{StampachStovicek15} that 
 \begin{equation}
 \mathfrak{F}\!\left(\left\{ \frac{w}{q^{-(\nu+k)/2}-q^{(\nu+k)/2}}\right\} _{\! k=0}^{\!\infty}\right)=\,_{0}\phi_{1}(;q^{\nu};q,-q^{\nu+1/2}w^{2}),
 \label{eq:calF_eq_q-confluent}
 \end{equation}
 providing $0<q<1$, $w,\nu\in\C$, and $q^{\nu}\notin q^{-\N_{0}}$. However, the corresponding derivation works with no 
 change even if we assume that $q\in\C$, $0<|q|<1$. From \eqref{eq:calF_eq_q-confluent}, one deduces
 \begin{equation}
  \mathfrak{F\!}\left(\left\{ \frac{\gamma_{k}^{\,2}}{z-\lambda_{k}}\right\}_{k=n}^{\infty}\right)\!=\mathfrak{F\!}\left(\left\{ \frac{\beta q^{(2k-1)/4}}{z-q^{k}}\right\} _{k=n}^{\infty}\right)
  \!={}_{0}\phi_{1}\left(-;z^{-1}q^{n};q,-q^{n}z^{-2}\beta^{2}\right)\!.
  \label{eq:to_f_n_ex_comb}
 \end{equation}
 By sending $n\to\infty$ in~\eqref{eq:to_f_n_ex_comb}, one proves in the same way as found in the proof of 
 \cite[Lem.~18]{StampachStovicek15} that
 \[
  F_{\mathcal{J}}(z)=\left(-\beta^{2}z^{-1};q\right)_{\infty}, \quad \forall z\notin q^{\Z}\cup\{0\}.
 \]
 The assumptions of Proposition \ref{prop:main_combined} are fulfilled with $p=1$ and the regularizing functions are
 \[
  \Phi_{1}^{+}(z)=\left(qz^{-1};q\right)_{\infty} \quad \mbox{ and } \quad \Psi_{1}^{-}(z)=\left(z;q\right)_{\infty}.
 \]
 Altogether, one gets 
 \[
  \tilde{F}_{\mathcal{J}}(z)=\left(z,qz^{-1},-\beta^{2}z^{-1};q\right)_{\infty}, \quad \forall z\in\C\setminus\{0\},
 \]
 and, in virtue of~\eqref{eq:to_f_n_ex_comb}, one obtains
 \begin{equation}
  \tilde{f}_{n}(z)=z^{-n}\beta^{n}q^{n(n-1)/4}\left(z^{-1}q^{n+1};q\right)_{\infty}\,_{0}\phi_{1}\left(-;z^{-1}q^{n+1};q,-q^{n+1}z^{-2}\beta^{2}\right)\!,
  \label{eq:f_tilde_ex_comb}
 \end{equation}
 for all $z\in\C\setminus\{0\}$.
 
 According to Proposition~\ref{prop:main_combined}, one has 
 \[
  \spec(J)=\{0\}\cup q^{\Z} \cup (-\beta^{2})q^{\N_{0}},
 \]
 where all the nonzero spectral points are eigenvalues with the corresponding eigenvector being determined by \eqref{eq:f_tilde_ex_comb} where $z$ is replaced by the respective eigenvalue.
 It can be even shown that $0$ is not an eigenvalue of $J$, see the proof of \cite[Prop.~24]{StampachStovicek15}. If $-\beta^{2}\notin q^{\Z}$, then all the eigenvalues of $J$ have
 the algebraic multiplicity equal to $1$, while if $-\beta^{2}\in q^{\Z}$, then there are infinitely many eigenvalues of $J$ with the algebraic multiplicity equal to $2$.
\end{example}


\begin{thebibliography}{10}

\bibitem{AbramowitzStegun}
{\sc Abramowitz, M., and Stegun, I.~A.}
\newblock {\em Handbook of mathematical functions with formulas, graphs, and
  mathematical tables}, vol.~55 of {\em National Bureau of Standards Applied
  Mathematics Series}.
\newblock For sale by the Superintendent of Documents, U.S. Government Printing
  Office, Washington, D.C., 1964.

\bibitem{Akhiezer}
{\sc Akhiezer, N.~I.}
\newblock {\em The classical moment problem and some related questions in
  analysis}.
\newblock Translated by N. Kemmer. Hafner Publishing Co., New York, 1965.

\bibitem{Beckermann01}
{\sc Beckermann, B.}
\newblock Complex {J}acobi matrices.
\newblock {\em J. Comput. Appl. Math. 127}, 1-2 (2001), 17--65.
\newblock Numerical analysis 2000, Vol. V, Quadrature and orthogonal
  polynomials.

\bibitem{Berezanskii}
{\sc Berezanski\u\i, J.~M.}
\newblock {\em Expansions in eigenfunctions of selfadjoint operators}.
\newblock Translated from the Russian by R. Bolstein, J. M. Danskin, J. Rovnyak
  and L. Shulman. Translations of Mathematical Monographs, Vol. 17. American
  Mathematical Society, Providence, R.I., 1968.

\bibitem{Conway}
{\sc Conway, J.~B.}
\newblock {\em Functions of one complex variable}, second~ed., vol.~11 of {\em
  Graduate Texts in Mathematics}.
\newblock Springer-Verlag, New York-Berlin, 1978.

\bibitem{Davies}
{\sc Davies, E.~B.}
\newblock {\em Linear operators and their spectra}, vol.~106 of {\em Cambridge
  Studies in Advanced Mathematics}.
\newblock Cambridge University Press, Cambridge, 2007.

\bibitem{GasperRahman}
{\sc Gasper, G., and Rahman, M.}
\newblock {\em Basic hypergeometric series}, vol.~35 of {\em Encyclopedia of
  Mathematics and its Applications}.
\newblock Cambridge University Press, Cambridge, 1990.
\newblock With a foreword by Richard Askey.

\bibitem{Helffer}
{\sc Helffer, B.}
\newblock {\em Spectral theory and its applications}, vol.~139 of {\em
  Cambridge Studies in Advanced Mathematics}.
\newblock Cambridge University Press, Cambridge, 2013.

\bibitem{Kato}
{\sc Kato, T.}
\newblock {\em Perturbation theory for linear operators}.
\newblock Die Grundlehren der mathematischen Wissenschaften, Band 132.
  Springer-Verlag New York, Inc., New York, 1966.

\bibitem{MassonRepka}
{\sc Masson, D.~R., and Repka, J.}
\newblock Spectral theory of {J}acobi matrices in {$l^2({\bf Z})$} and the
  {${\rm su}(1,1)$} {L}ie algebra.
\newblock {\em SIAM J. Math. Anal. 22}, 4 (1991), 1131--1146.

\bibitem{ReedSimon4}
{\sc Reed, M., and Simon, B.}
\newblock {\em Methods of modern mathematical physics. {IV}. {A}nalysis of
  operators}.
\newblock Academic Press [Harcourt Brace Jovanovich, Publishers], New
  York-London, 1978.

\bibitem{Simon_mp}
{\sc Simon, B.}
\newblock The classical moment problem as a self-adjoint finite difference
  operator.
\newblock {\em Adv. Math. 137}, 1 (1998), 82--203.

\bibitem{Simon}
{\sc Simon, B.}
\newblock {\em Trace ideals and their applications}, second~ed., vol.~120 of
  {\em Mathematical Surveys and Monographs}.
\newblock American Mathematical Society, Providence, RI, 2005.

\bibitem{StampachStovicek11}
{\sc {\v S}tampach, F., and {\v S}{\v t}ov{\' i}{\v c}ek, P.}
\newblock On the eigenvalue problem for a particular class of finite {J}acobi
  matrices.
\newblock {\em Linear Algebra Appl. 434}, 5 (2011), 1336--1353.

\bibitem{StampachStovicek13}
{\sc {\v S}tampach, F., and {\v S}{\v t}ov{\' i}{\v c}ek, P.}
\newblock The characteristic function for {J}acobi matrices with applications.
\newblock {\em Linear Algebra Appl. 438}, 11 (2013), 4130--4155.

\bibitem{StampachStovicek15}
{\sc {\v S}tampach, F., and {\v S}{\v t}ov{\' i}{\v c}ek, P.}
\newblock Special functions and spectrum of {J}acobi matrices.
\newblock {\em Linear Algebra Appl. 464\/} (2015), 38--61.

\bibitem{Teschl}
{\sc Teschl, G.}
\newblock {\em Jacobi operators and completely integrable nonlinear lattices},
  vol.~72 of {\em Mathematical Surveys and Monographs}.
\newblock American Mathematical Society, Providence, RI, 2000.

\bibitem{EmbreeTrefethen}
{\sc Trefethen, L.~N., and Embree, M.}
\newblock {\em Spectra and pseudospectra}.
\newblock Princeton University Press, Princeton, NJ, 2005.
\newblock The behavior of nonnormal matrices and operators.

\bibitem{Wall}
{\sc Wall, H.~S.}
\newblock {\em Analytic {T}heory of {C}ontinued {F}ractions}.
\newblock D. Van Nostrand Company, Inc., New York, N. Y., 1948.

\end{thebibliography}
\end{document}